\documentclass[11pt,british,a4paper,reqno]{amsart}
    \usepackage[T1]{fontenc}
\usepackage[utf8]{inputenc}
	\usepackage[british]{babel}
    \DeclareMathSizes{12}{12}{7}{6}

\usepackage{amssymb}
\usepackage{amsmath} 
\usepackage{braket}
\usepackage{mathrsfs}
\usepackage{graphicx}
\usepackage{transparent}
\usepackage{amsthm}
\usepackage{geometry}
    \geometry{tmargin=2.5cm,bmargin=2.5cm,lmargin=2.2cm,rmargin=2.2cm}
\usepackage{xcolor}
\usepackage[shortlabels]{enumitem}
\usepackage{mathtools}
\usepackage{setspace}
\usepackage{tikz}
\usepackage{tikz-cd}

\usepackage[hidelinks]{hyperref}
\usepackage[capitalise]{cleveref}
		\usepackage{bookmark}
			\bookmarksetup{numbered,open}

\usepackage{oplotsymbl} 
\usepackage{stmaryrd} 
	\numberwithin{figure}{section}
	\numberwithin{equation}{section}
\newtheorem{theorem}{Theorem}[section]
\newtheorem*{theorem*}{Main Theorem}
\newtheorem{lemma}[theorem]{Lemma}
\newtheorem{corollary}[theorem]{Corollary}
\newtheorem*{corollary*}{Corollary}
\newtheorem{proposition}[theorem]{Proposition}
\newtheorem{mainthm}{Theorem}

\newtheorem{mainprop}[mainthm]{Proposition}
    \theoremstyle{definition}
\newtheorem{maindef}[mainthm]{Definition}
\newtheorem{mainsetup}[mainthm]{Setup}

    \theoremstyle{definition}
\newtheorem{definition}[theorem]{Definition}
\newtheorem{setup}[theorem]{Setup}
\newtheorem*{setup*}{Setup}
\newtheorem*{definition*}{Definition}

\newtheorem{remark}[theorem]{Remark}
\newtheorem*{remark*}{Remark}

\newtheorem{construction}[theorem]{Construction}

\newtheorem{example}[theorem]{Example}

\DeclareMathOperator{\Mor}{\mathsf{Mor}}
\def\Ob{\operatorname{Ob}}
\def\Ext{\operatorname{Ext}}

\def\Hom{\operatorname{Hom}}

\def\mod{\operatorname{\mathsf{mod}}}
\def\proj{\operatorname{\mathsf{proj}}}

\def\Ab{\mathsf{Ab}}
\def\add{\operatorname{\mathsf{add}}}

\newcommand{\op}{\mathsf{op}}
\renewcommand{\sp}{\mathsf{sp}}
\newcommand{\A}{\mathscr{A}}

\newcommand{\C}{\mathscr{C}}
\newcommand{\D}{\mathscr{D}}
\newcommand{\E}{\mathcal{E}}

\renewcommand{\H}{\mathcal{H}}

\renewcommand{\L}{\mathcal{L}}
\newcommand{\M}{\mathscr{M}}
\newcommand{\N}{\mathscr{N}}
\renewcommand{\P}{\mathcal{P}}

\newcommand{\R}{\mathcal{R}}
\newcommand{\U}{\mathcal{U}}
\newcommand{\V}{\mathcal{V}}

\newcommand{\X}{\mathscr{X}}

\newcommand{\Ker}{\operatorname{Ker}}
\renewcommand{\Im}{\operatorname{Im}}

\newcommand{\id}{\mathsf{id}}
\newcommand{\xto}{\xrightarrow}
\newcommand{\Sn}{\mathscr{S}_\N}

\def\cone{\operatorname{\mathsf{Cone}}}
\def\cocone{\operatorname{\mathsf{CoCone}}}

\def\isoclass{\operatorname{\mathsf{isoclass}}}

\def\ind{\operatorname{\mathsf{ind}}}
\def\index{\operatorname{\mathsf{index}}}


 \newcommand{\lra}{\longrightarrow}    


\let\oldtocsubsection=\tocsubsection

\renewcommand{\tocsubsection}[2]{\hspace{20pt}\oldtocsubsection{#1}{#2}}
	\newcommand{\deff}{\coloneqq}
	\newcommand{\sse}{\subseteq}
	\newcommand{\rtail}{\rightarrowtail}
	
	\newcommand{\onto}{\twoheadrightarrow}

        \renewcommand{\sec}{\mathsf{sec}}
        \newcommand{\ret}{\mathsf{ret}}
        \newcommand{\inc}{\mathsf{inc}}
	
    \newcommand{\fs}{\mathfrak{s}}
    \newcommand{\ft}{\mathfrak{t}}
    \newcommand{\fu}{\mathfrak{u}}

	\newcommand{\BE}{\mathbb{E}}
	\newcommand{\BF}{\mathbb{F}}

	\newcommand{\BZ}{\mathbb{Z}}

\newcommand{\indpent}{\mathchoice
  			{\pentago}
  			{\pentago}
  			{\scalebox{.7}{\pentago}}
  			{\scalebox{.5}{\pentago}}
		}

	\usetikzlibrary{matrix,arrows,decorations.pathmorphing,positioning,decorations.pathreplacing}
	\tikzset{commutative diagrams/.cd, 
		mysymbol/.style = {start anchor=center, end anchor = center, draw = none}}
    \tikzset{
    labl/.style={anchor=north, rotate=90, inner sep=1mm}
    }

	\let\amph=& 
	
	\tikzcdset{every label/.append style = {font = \footnotesize}}

\newcommand{\commutes}[2][\circ]{\arrow[mysymbol]{#2}[description]{#1}}

\newcommand{\wPB}[1]{\arrow[mysymbol]{#1}[description]{\mathrm{(wPB)}}}

\newcommand{\ol}[1]{\overline{#1}}


\makeatletter


 \def\@setaddresses{\par
     \nobreak \begingroup
     \setstretch{1} 
     \setlength{\parindent}{0cm}
     \footnotesize
     \def\author##1{\nobreak\addvspace\bigskipamount}%
     \def\\{\unskip, \ignorespaces}%
     \interlinepenalty\@M
     \def\address##1##2{\begingroup
         \par\addvspace\bigskipamount
     \@ifnotempty{##1}{(\ignorespaces##1\unskip) }%
     {\scshape\ignorespaces##2}\par\endgroup}%
     \def\curraddr##1##2{\begingroup
     \@ifnotempty{##2}{\nobreak\curraddrname
     \@ifnotempty{##1}{, \ignorespaces##1\unskip}\/:\space
     ##2\par}\endgroup}%
     \def\email##1##2{\begingroup
     \@ifnotempty{##2}{\nobreak\emailaddrname
     \@ifnotempty{##1}{, \ignorespaces##1\unskip}\/:\space
     \ttfamily##2\par}\endgroup}%
     \def\urladdr##1##2{\begingroup
     \def~{\char'\~}%
     \@ifnotempty{##2}{\nobreak\urladdrname
     \@ifnotempty{##1}{, \ignorespaces##1\unskip}\/:\space
     \ttfamily##2\par}\endgroup}%
     \addresses
     \endgroup
 }

\newcommand{\colim@}[2]{%
  \vtop{\m@th\ialign{##\cr
    \hfil$#1\operator@font colim$\hfil\cr
    \noalign{\nointerlineskip\kern1.5\ex@}#2\cr
    \noalign{\nointerlineskip\kern-\ex@}\cr}}%
}
\newcommand{\colim}{%
  \mathop{\mathpalette\colim@{\rightarrowfill@\scriptscriptstyle}}\nmlimits@
}

\makeatother

\begin{document}
\title[An extriangulated resolution theorem and the index]{A resolution theorem for extriangulated categories with applications to the index}

\author[Ogawa]{Yasuaki Ogawa}
	\address{Faculty of Engineering Science, Kansai University, Japan}
	\email{y\underline{ }ogawa@kansai-u.ac.jp} %

\author[Shah]{Amit Shah}
	\address{Department of Mathematics, Aarhus University, Denmark}\
	\email{amit.shah@math.au.dk} %

\keywords{%
Extriangulated category, 
Grothendieck group, 
index, 
localization, 
relative theory, 
resolution, 
triangulated category}

\subjclass[2020]{%
Primary 18F25; Secondary 18G80, 18E35}

\begin{abstract}
Quillen's Resolution Theorem in algebraic $K$-theory provides a powerful computational tool for calculating $K$-groups of exact categories. At the level of $K_0$, this result goes back to Grothendieck. In this article, we first establish an extriangulated version of Grothendieck's Resolution Theorem. 

Second, we use this Extriangulated Resolution Theorem to gain new insight into the index theory of triangulated categories. Indeed, we propose an index with respect to an extension-closed subcategory $\mathscr{N}$ of a triangulated category $\mathscr{C}$ and we prove an additivity formula with error term. Our index recovers the index with respect to a contravariantly finite, rigid subcategory $\mathscr{X}$ defined by J{\o}rgensen and the second author, as well as an isomorphism between $K_0^{\mathsf{sp}}(\mathscr{X})$ and the Grothendieck group of a relative extriangulated structure $\mathscr{C}_{R}^{\mathscr{X}}$ on $\mathscr{C}$ when $\mathscr{X}$ is $n$-cluster tilting. In addition, we generalize and enhance some results of Fedele. Our perspective allows us to remove certain restrictions and simplify some arguments. 

Third, as another application of our Extriangulated Resolution Theorem, we show that if $\mathscr{X}$ is $n$-cluster tilting in an abelian category, then the index introduced by Reid gives an isomorphism $K_0(\mathscr{C}_R^{\mathscr{X}}) \cong K_0^{\mathsf{sp}}(\mathscr{X})$.
\end{abstract}
\maketitle
{\small
\tableofcontents
}

\setlength{\baselineskip}{15pt}

\section{Introduction}
\label{sec_intro}

Algebraic $K$-theory has its roots in geometry, drawing its name from Grothendieck's proof of what is now known as the Grothendieck-Riemann-Roch theorem \cite{BorelSerre58}, \cite{SGA6}. The base case of this theory focuses on the Grothendieck group $K_0(\C)$ of a, classically, (skeletally small) abelian or exact category $\C$, which is defined in a purely algebraic fashion. 
The abelian group $K_0(\C)$ is the free group generated on the set of isomorphism classes $[A]$, for $A\in\C$, modulo the relations $[A]-[B]+[C]$ for each admissible short exact sequence $A\rtail B \onto C$ in $\C$. 
However, a priori, computing $K_0(\C)$ may be quite difficult. 
For example, if $R$ is a ring and $\mod R$ is the exact category of finitely generated (left) $R$-modules, 
then determining $K_0(\mod R)$ has led to a rich literature, e.g.\ \cite{HR65,HMW92,Hol15}.

A Resolution Theorem, attributed to Grothendieck (see Bass \cite[Thm.~VIII.8.2]{Bas68}, or \cref{thm_Quillen_resolution_ex}), shows that 
if we can identify a suitable subcategory $\D$ of $\C$, then $K_0(\C)\cong K_0(\D)$. Here, `suitable' means each object in $\C$ admits a finite resolution by objects in $\D$ (see \cref{def_X_resolution}), and $\D$ is an extension-closed subcategory of $\C$ that is also closed under taking kernels of admissible deflations. 
In the case $\C = \mod R$, if every $M\in\mod R$ has finite projective dimension, then we may choose $\D = \proj R$, the subcategory of finitely generated projective $R$-modules. Moreover, since all short exact sequences in $\proj R$ split, the group $K_0(\proj R)$ is in principle simpler to compute.

The first aim of this article is to establish an extriangulated version of the Resolution Theorem (see \cref{ThmB} below).
The notion of an extriangulated category was introduced by Nakaoka--Palu \cite{NP19} and is a unification of Quillen's exact categories and Grothendieck--Verdier's triangulated categories. 
Extriangulated category theory serves as a convenient framework in which to write down proofs that apply to both exact and triangulated categories, and more generally to
their substructures. 
Like the theory of exact categories, but in contrast to the triangulated category theory, each extension-closed subcategory of an extriangulated category inherits an extriangulated structure.
We recall the necessary preliminaries on extriangulated categories in \S\ref{subsec_loc_of_ET}.

Since the theory of extriangulated categories was introduced in 2019, it has allowed many notions and constructions to be streamlined, such as cotorsion pairs \cite{LN19} and Auslander-Reiten theory \cite{INP19}. 
In this same flow, our Extriangulated Resolution Theorem is indeed a generalization of Grothendieck's Resolution Theorem. The Grothendieck group of an extriangulated category is defined in a similar fashion to that of an exact category; see \cref{def:Groth-grou-of-extri-cat}.

\begin{mainthm}[\cref{thm_Quillen_resolution}]
\label{ThmB}
Assume that $\X$ is a full additive subcategory of a skeletally small extriangulated category $(\C,\mathbb{E},\mathfrak{s})$, and that $\X$ is extension-closed and closed under taking cocones of $\mathfrak{s}$-deflations $g\colon B\to C$ where $B,C\in\X$.
If each object $C\in\C$ admits a finite $\X$-resolution, then we have a group isomorphism $
\begin{tikzcd}[column sep=0.6cm,cramped]
    K_0(\C)
    \arrow{r}{\cong}
    &K_0(\X).
\end{tikzcd}$
\end{mainthm}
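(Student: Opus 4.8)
The plan is to construct mutually inverse group homomorphisms between $K_0(\C)$ and $K_0(\X)$. One direction is immediate: since $\X$ is extension-closed it carries the inherited extriangulated structure, so $K_0(\X)$ is defined (\cref{def:Groth-grou-of-extri-cat}) and the inclusion $\X\into\C$ is an exact functor, inducing $\iota_*\colon K_0(\X)\to K_0(\C)$, $[X]\mapsto[X]$. For the other direction I would introduce an Euler-characteristic map: for $C\in\C$, fix a finite $\X$-resolution, that is (cf.\ \cref{def_X_resolution}) $\fs$-conflations $\syz^{i+1}C\rtail X_i\onto\syz^i C$ for $0\le i\le n-1$ with $\syz^0 C=C$ and $\syz^n C=X_n\in\X$, and set $\chi(C)\deff\sum_{i=0}^{n}(-1)^i[X_i]\in K_0(\X)$. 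Once $\chi$ is shown to be independent of the chosen resolution and additive on $\fs$-conflations of $\C$, it factors through a homomorphism $\overline{\chi}\colon K_0(\C)\to K_0(\X)$, and the remainder is formal: $\overline{\chi}\iota_*[X]=\chi(X)=[X]$ via the length-zero resolution of $X\in\X$, while $\iota_*\overline{\chi}[C]=\sum_i(-1)^i[X_i]$ telescopes to $[C]$ in $K_0(\C)$ because each defining conflation contributes the relation $[\syz^{i+1}C]-[X_i]+[\syz^i C]=0$. Hence $\iota_*$ and $\overline{\chi}$ are mutually inverse isomorphisms.

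The first substantial point is well-definedness of $\chi$. After padding the shorter of two $\X$-resolutions of $C$ with zero objects I may assume both have length $n$, and I would induct on $n$ (the base case $C\in\X$ being handled by telescoping inside $K_0(\X)$, using closure under cocones of $\fs$-deflations between objects of $\X$ to keep all the occurring syzygies in $\X$). Given two resolutions beginning with $\syz^1 C\rtail X_0\onto C$ and $\syz^1_{\!Y}C\rtail Y_0\onto C$, the engine is the extriangulated form of Schanuel's lemma: pulling back the two $\fs$-deflations $X_0\onto C$ and $Y_0\onto C$ — such a pullback exists by the extriangulated axioms — yields an object $E$ fitting into $\fs$-conflations $\syz^1 C\rtail E\onto Y_0$ and $\syz^1_{\!Y}C\rtail E\onto X_0$. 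A dimension-shifting estimate bounds the $\X$-resolution length of $E$ by $n-1$, so the inductive hypothesis (well-definedness, together with the additivity-with-quotient-in-$\X$ statement below, applied to the two conflations through $E$) gives $\chi(E)=\chi(\syz^1 C)+[Y_0]=\chi(\syz^1_{\!Y}C)+[X_0]$; rearranging yields $[X_0]-\chi(\syz^1 C)=[Y_0]-\chi(\syz^1_{\!Y}C)$, which are precisely the two candidate values of $\chi(C)$.

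Granting well-definedness, I would prove additivity on a conflation $A\rtail B\onto C$ by first disposing of two special cases, each using only well-definedness and one closure hypothesis on $\X$. If $B\in\X$, then $B\onto C$ is an $\X$-deflation with cocone $A$, and comparing resolutions gives $\chi(B)=\chi(A)+\chi(C)$. If $C\in\X$, choose an $\X$-deflation $P\onto B$ with cocone $\syz^1 B$; the composite $P\onto B\onto C$ is an $\fs$-deflation whose cocone $K$ lies in $\X$ (closure under cocones of $\fs$-deflations between objects of $\X$), a Verdier-type octahedron supplies a conflation $\syz^1 B\rtail K\onto A$, and chasing $\chi(B)=[P]-\chi(\syz^1 B)$, $[P]=[K]+[C]$ in $K_0(\X)$ and $\chi(A)=[K]-\chi(\syz^1 B)$ gives $\chi(B)=\chi(A)+\chi(C)$. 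The general case I would reduce to these by induction on the sum of the $\X$-resolution lengths of $B$ and $C$: if $C\notin\X$ and the $\X$-resolution length of $A$ does not exceed that of $B$, resolve $B$ one step and apply the octahedron (the new conflation $\syz^1 B\rtail K\onto A$ has strictly smaller invariant, and $K\rtail P\onto C$ has middle term in $\X$); otherwise a dimension-shifting inequality forces $C$ to be the strict maximum, so resolve $C$ one step and pull back along $B\onto C$, splitting into a conflation with quotient in $\X$ and one with strictly smaller invariant. Either way one reassembles $\chi(B)=\chi(A)+\chi(C)$, producing $\overline{\chi}$ and finishing the proof.

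The main obstacle I anticipate lies in the middle two paragraphs. First, one must assemble within the extriangulated framework the toolkit invoked above — pullbacks of $\fs$-deflations, the octahedron-type manipulations, and the standard dimension-shifting facts for $\X$-resolution length (for instance that the $\X$-resolution length of the cocone of an $\X$-deflation onto $C$ is $\max\{0,\text{(length of }C)-1\}$, which itself rests on Schanuel together with both closure hypotheses). Second, one must arrange the two inductions so that they interlock: well-definedness of $\chi$ is used in the additivity arguments and the special cases of additivity are used in the well-definedness step, and the inductive invariants must be chosen so that every auxiliary object ($E$, $K$, the iterated pullbacks) falls strictly inside the appropriate inductive hypothesis. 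By comparison, the exactness of the inclusion functor, the telescoping identity, and the verification that $\iota_*$ and $\overline{\chi}$ are mutually inverse are routine.
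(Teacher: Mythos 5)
Your plan is correct in outline, but it follows a genuinely different route from the paper. You run the classical Bass--Weibel argument directly in the extriangulated setting: define the Euler characteristic $\chi(C)=\sum_i(-1)^i[X_i]$, prove it is independent of the resolution via an extriangulated Schanuel pullback, prove additivity on $\fs$-conflations by \textup{(ET4$^{\op}$)}-reductions, and then check that $\chi$ and the map induced by the inclusion are mutually inverse. The paper instead filters $\C$ by the subcategories $\X_i=\cone(\X_{i-1},\X)$ of objects of resolution length at most $i$, proves that each $\X_i$ is extension-closed (\cref{new-lemma-3-4}) --- this is where all of its Schanuel/\textup{(ET4$^{\op}$)} diagram chasing is concentrated --- then shows the inclusions induce isomorphisms $K_0(\X_i)\cong K_0(\X_{i+1})$ (\cref{prop_sequence_of_K0}, whose proof contains exactly the Schanuel comparison of two one-step resolutions that you use), and identifies $K_0(\C)$ with $\colim_i K_0(\X_i)$. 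What the paper's organization buys is that well-definedness of the Euler characteristic and additivity on conflations come for free: any two resolutions of $C$, or any given conflation, live inside some $\X_N$, and the needed relations already hold in $K_0(\X_N)$, so no case analysis or interlocking induction is required. What your organization buys is that you only ever need extension-closedness in the special case where one end term lies in $\X$, at the price of the double induction you describe.

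Two caveats on your sketch. First, the step you call ``a dimension-shifting estimate bounds the $\X$-resolution length of $E$ by $n-1$'' is not a routine estimate: it is precisely the statement that an extension of an object of $\X$ by an object of resolution length at most $n-1$ again has resolution length at most $n-1$, i.e.\ the special case of \cref{new-lemma-3-4} in which one end term lies in $\X$ (take $Y_C=0$ and $X_C\in\X$ in that proof). It is true, and provable by an induction of its own using the weak pullback, \textup{(ET4$^{\op}$)} and uniqueness of (co)cones, but it is the real crux of the theorem and cannot be absorbed into ``standard dimension-shifting facts''; your quoted fact about cocones of $\X$-deflations onto $C$ is a consequence of it, not a substitute for it. Second, the ``otherwise'' branch of your additivity induction appeals to an inequality forcing $C$ to have strictly maximal length; the natural candidate, $\ell(A)\le\max\{\ell(B),\ell(C)-1\}$ for a conflation $A\rtail B\onto C$, bounds the length of a middle term of an auxiliary conflation and so threatens to require full extension-closedness of the $\X_i$ rather than the one-end-in-$\X$ case. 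This branch is avoidable: taking as invariant the sum of the resolution lengths of all three terms, the single move ``resolve $B$ one step and apply \textup{(ET4$^{\op}$)}'' already strictly decreases it whenever $B,C\notin\X$ (the syzygy of $B$ drops the length by one and the new quotient $K$ has length at most $\ell(C)-1$), so the second branch, and with it the problematic inequality, can simply be deleted. With these adjustments your argument goes through and constitutes a legitimate alternative proof of \cref{thm_Quillen_resolution}.
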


We also mention here that Quillen founded 
higher algebraic $K$-theory for exact categories in \cite{Qui73}, wherein he defined $K$-groups $K_i(\C)$ for $\C$ exact and all integers $i\geq 0$. 
In addition, Quillen generalized the Resolution Theorem to higher $K$-groups (see \cite[Thm.~3]{Qui73}) and this result is a fundamental theorem in $K$-theory, providing powerful computational machinery. 
There is also an ever-growing number of results on the $K$-theory of triangulated categories, e.g.\ \cite{TT90, Nee05, Sch06, CX14}.

The second aim of this article is to augment the index theory of triangulated categories by exploiting that \cref{ThmB} offers new insight into this theory.  
The index was introduced by Palu \cite{Pal08} in order to better understand the Caldero--Chapoton map, and it continues to find beneficial applications in various contexts. 
For example, in \cite{GNP23} it was revealed that the index is closely related to $0$-Auslander extriangulated categories in connection with various mutations in representation theory, such as cluster tilting mutation \cite{BMRRT, IY08} and silting mutation \cite{AI12, Kim23}.

For a suitable skeletally small $2$-Calabi--Yau triangulated category $\C$ with suspension functor $[1]$ and a $2$-cluster tilting subcategory $\X\sse\C$ (see \cref{example:n-CT-resolution}), Palu defined the \emph{index} of an object $C\in\C$ with respect to $\X$ as a certain element in the split Grothendieck group $K_0^{\sp}(\X)$ of $\X$. The group $K_0^{\sp}(\X)$ is the abelian group freely generated on isomorphism classes $[X]^{\sp}$ of objects $X\in\X$, modulo the relations $[X']^{\sp}-[X]^{\sp}+[X'']^{\sp}$ for each split exact sequence $X'\to X \to X''$ in $\X$. 
In this setup, the index of $C\in\C$ is defined as the element $\ind_\X(C) = [X_0]^{\sp} - [X_1]^{\sp}$ in $K_0^{\sp}(\X$), where $C$ admits a triangle 
$\begin{tikzcd}[column sep=0.5cm]
    X_1 \arrow{r}& X_0 \arrow{r}& C \arrow{r}& X_1[1]
\end{tikzcd}$
in $\C$ with $X_i\in\X$; see \cite[\S2.1]{Pal08} or \cref{def_index}.

Using extriangulated categories, 
Padrol--Palu--Pilaud--Plamondon \cite{PPPP19} observed that this triangle can be viewed as a projective presentation of $C$. 
Indeed, in a certain extriangulated category $(\C,\BE^\X_R,\fs^\X_R)$ \emph{relative} to the triangulated structure on $\C$ (see \S\ref{subsec_rel_theory}), the subcategory $\X$ is precisely the subcategory of all \emph{$\BE^\X_R$-projective} objects (see \cite[Def.~3.23]{NP19}). 
Moreover, 
it follows from \cite[Prop.~4.11]{PPPP19} 
that the index induces the following isomorphism (see also \cref{cor_Palu_index}). 
\begin{equation}\label{PPPP-index-isom-intro}
\begin{tikzcd}
    \ind_\X\colon K_0(\C,\BE^\X_R,\fs^\X_R)\arrow{r}{\cong}& K_0^{\sp}(\X)
\end{tikzcd}
\end{equation}

The isomorphism \eqref{PPPP-index-isom-intro} prompted J{\o}rgensen and the second author to define an index with respect to a more general class of subcategory in \cite{JS23}. 
We note here that the index has been defined for any contravariantly finite, additive, direct-summand-closed subcategory of a skeletally small, idempotent complete triangulated category in \cite{FJS24a}.

Assume the following for the remainder of \S\ref{sec_intro}.

\begin{mainsetup}\label{setup:intro}
    Let $\C$ be a skeletally small, idempotent complete, triangulated category with suspension $[1]$. In addition, suppose $\X\subseteq\C$ is a full, additive subcategory that is contravariantly finite, rigid and closed under direct summands.
\end{mainsetup}

The \emph{index} of $C\in\C$ with respect to $\X$, in the sense of \cite[Def.~3.5]{JS23}, is the class $[C]^\X_R$ in $K_0(\C,\BE^\X_R,\fs^\X_R)$. 
Using this index, the $n$-cluster tilting analogue of \eqref{PPPP-index-isom-intro} was proven in \cite[Thm.~4.10]{JS23}; see \cref{cor_JS_index}. 
In \S\ref{sec_index}, we use \cref{ThmB} to generalize \cite[Thm.~4.10]{JS23}, and hence also \cite[Prop.~4.11]{PPPP19} (i.e.\ \eqref{PPPP-index-isom-intro}); see \cref{cor_Quillen_resolution_to_index}. 
We do this by proposing a new index.

One can set  $\N\deff\X^{\perp_0}\deff\Set{C\in\C | \C(\X,C)=0}$, which is an extension-closed subcategory of $\C$, 
and 
define an extriangulated category 
$(\C,\BE^R_\N,\fs^R_\N)$ (see \cref{prop_relative2}) relative again to the triangulated structure on $\C$. 
We prove in \cref{lem_comparison_relative_structures} that, as extriangulated categories, we have 
$(\C,\BE^R_\N,\fs^R_\N)
    = (\C,\BE^\X_R,\fs^\X_R)
$. 
In particular, we have 
\[
K_0(\C,\BE^R_\N,\fs^R_\N)
    = K_0(\C,\BE^\X_R,\fs^\X_R),
\]
and hence we make the following definition.

\begin{maindef}[\cref{def:index-wrt-extn-closed}]
\label{def:index-intro}
    The \emph{(right) index with respect to $\N$} of an object $C\in\C$ is its class 
$[C]^R_\N$ in $K_{0}(\C,\BE^R_\N,\fs^R_\N)$.
\end{maindef}

We remark that \cref{def:index-intro} makes sense for any extension-closed subcategory $\N$ of $\C$, not just $\X^{\perp_{0}}$, and hence it is a strict generalization of the index $[-]^\X_R$ of \cite{JS23}. 
The upshot of defining an index with respect to an extension-closed subcategory is that we can utilize the localization theory of extriangulated categories as established by Nakaoka, Sakai and the first author \cite{NOS22} (see \S\ref{subsub_localization}). 
In \cite{NOS22} a unification of the Serre quotient and Verdier quotient constructions is given. 
The extriangulated localization theory for triangulated categories is explored further in \cite{Oga22b} and, in particular, there is a certain localization, or quotient,  $\C/\N$ of $\C$ with respect to $\N$ that is abelian; see \eqref{eqn:localization-of-C} and \cref{N-serre-local-is-abelian}. We denote the localization functor by $Q\colon \C\to \C/\N$. Our results in \S\ref{subsec_additive_formula} culminate in the following additivity formula with error term for our index $[-]^R_\N$.

\begin{mainthm}\label{mainthm:additivity}
There is a well-defined group homomorphism 
$\theta^R_\N\colon K_0(\C/\N)\lra K_{0}(\C,\BE^R_\N,\fs^R_\N)$, so that, for any triangle $A\overset{f}{\lra}B\overset{g}{\lra}C\overset{h}{\lra} A[1]$ in $\C$, we have 
$[A]_\N^R-[B]_\N^R+[C]_\N^R=\theta^R_\N\bigl(\Im Q(h)\bigr)$.
\end{mainthm}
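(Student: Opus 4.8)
The plan is to first understand the error term $\Im Q(h)$ and then define $\theta^R_\N$ on generators and check it kills the defining relations of $K_0(\C/\N)$. Since $\C/\N$ is abelian with localization functor $Q\colon\C\to\C/\N$, every object of $\C/\N$ is (up to isomorphism) of the form $Q(C)$ for some $C\in\C$; moreover, as $\N$ is the kernel of $Q$, an object $Q(C)$ is zero precisely when $C\in\N$. So I would attempt to \emph{define} $\theta^R_\N$ by $\theta^R_\N\bigl([Q(C)]\bigr)\deff [C]^R_\N$ for $C\in\C$, and the real content is showing this is well defined and additive. Well-definedness has two parts: independence of the choice of preimage $C$ of a given object of $\C/\N$, and compatibility with the relations coming from short exact sequences in the abelian category $\C/\N$.

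For the second part I would use the description of short exact sequences in $\C/\N$ coming from the localization: a short exact sequence in $\C/\N$ lifts (after the standard calculus-of-fractions manipulation) to a triangle $A\to B\to C\to A[1]$ in $\C$ whose connecting map $h$ becomes zero in $\C/\N$, i.e.\ $Q(h)=0$; such triangles are exactly the $\fs^R_\N$-conflations in the relative structure $(\C,\BE^R_\N,\fs^R_\N)$ by the definition of the relative extension bifunctor $\BE^R_\N$ (via the identification $(\C,\BE^R_\N,\fs^R_\N)=(\C,\BE^\X_R,\fs^\X_R)$ and the way the relative theory in \S\ref{subsec_rel_theory} is set up so that $h$ is ``$\N$-negligible''). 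Hence the relation $[Q(A)]-[Q(B)]+[Q(C)]=0$ in $K_0(\C/\N)$ must be sent to $[A]^R_\N-[B]^R_\N+[C]^R_\N$, which vanishes in $K_0(\C,\BE^R_\N,\fs^R_\N)$ precisely because that triangle is a relative conflation. Conversely this same computation, applied to a \emph{general} triangle $A\xrightarrow{f}B\xrightarrow{g}C\xrightarrow{h}A[1]$ in $\C$, is what yields the error-term formula: one factors the triangle, through $\Im Q(h)$, into pieces that either are relative conflations (contributing $0$) or are detected by $\theta^R_\N$, leaving exactly $\theta^R_\N(\Im Q(h))$.

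In more detail for the formula: given an arbitrary triangle $A\xrightarrow{f}B\xrightarrow{g}C\xrightarrow{h}A[1]$, I would apply the exact functor $Q$ (it is homological, since $\C/\N$ is abelian and $Q$ sends triangles to long exact sequences — this is part of the abelian-localization package) to obtain an exact sequence in $\C/\N$, and split it into short exact sequences involving $\Im Q(h)\sse Q(A[1])$. The point is that $\Im Q(h)$ measures the failure of the triangle to be a relative conflation: when $Q(h)=0$ the triangle \emph{is} an $\fs^R_\N$-conflation (so the left side is $0$ and $\Im Q(h)=0$, consistent), and in general one compares the given triangle with a ``truncated'' relative conflation whose third term differs from $C$ by an object mapping onto $\Im Q(h)$. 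Concretely, using contravariant finiteness of $\X$ one builds an $\X$-approximation triangle fitting $C$, compares it with the given triangle, and reads off that $[A]^R_\N-[B]^R_\N+[C]^R_\N$ equals the class of a preimage in $\C$ of $\Im Q(h)$ under $\theta^R_\N$; independence of that preimage is exactly the well-definedness established above.

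The main obstacle I expect is pinning down precisely which short exact sequences in $\C/\N$ arise, and showing the assignment $[Q(C)]\mapsto[C]^R_\N$ respects all of them — i.e.\ proving that every short exact sequence in the abelian quotient $\C/\N$ can be realized (up to the relations already present in $K_0$) by an $\fs^R_\N$-conflation in $\C$. This requires combining the calculus of fractions for the localization $\C\to\C/\N$ (from \cite{NOS22}, \cite{Oga22}) with the explicit form of the relative extension bifunctor $\BE^R_\N$ and the identification $(\C,\BE^R_\N,\fs^R_\N)=(\C,\BE^\X_R,\fs^\X_R)$ from \cref{lem_comparison_relative_structures}; the bookkeeping tying $\Im Q(h)$ to the correct generator of $K_0(\C/\N)$ is where care is needed, but no single step should be deep once the dictionary between the three structures (triangulated, relative extriangulated, abelian localization) is in place.
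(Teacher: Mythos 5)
There is a genuine gap, and it is at the very first step: your definition $\theta^R_\N\bigl([Q(C)]\bigr)\deff [C]^R_\N$ is not well defined and cannot satisfy the claimed formula. For $N\in\N$ we have $Q(N)\cong Q(0)=0$, but $[N]^R_\N$ is in general nonzero in $K_0(\C^R_\N)$: e.g.\ in the cluster category of type $A_1$ with $\X=\add(k)$ and $\N=\X^{\perp_0}=\add(k[1])$, the triangle $k\to 0\to k[1]\to k[1]$ gives $[k[1]]^R_\N=-[k]^R_\N$, which corresponds to $-[k]^{\sp}\neq 0$ under the index isomorphism; so the assignment sends the zero object of $\C/\N$ to two different classes. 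The formula itself also forces a different definition: applying it to the rotated triangle $C[-1]\to 0\to C\xrightarrow{\pm\id_C} C$ yields $\theta^R_\N\bigl(\Im Q(\pm\id_C)\bigr)=\theta^R_\N\bigl([Q(C)]\bigr)=[C]^R_\N+[C[-1]]^R_\N$. This is exactly the definition the paper uses (\cref{prop_add_formula3}): the error-term homomorphism must carry the extra shifted summand $[C[-1]]^R_\N$, and well-definedness is then proved via \cref{lem_add_formula1} (invariance under $Q(B)\cong Q(C)$, using that morphisms in $\Sn$ fit into triangles whose outer maps factor through $\N$) and additivity on short exact sequences via \cref{lem_inf}, lifting them to $\fs_\N$-triangles whose connecting map \emph{and} its shift factor through $\N$.

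Two further points in your sketch would also need repair. First, a triangle with $Q(h)=0$ is not the same thing as an $\fs^R_\N$-conflation: under $\cone(\N,\N)=\C$ the correct criterion is that $h$ itself factors through an object of $\N$ (\cref{lem_specific_relative_structures}), and $Q(h)=0$ is a strictly weaker condition since the localization functor $\overline{\C}\to\overline{\C}[\overline{\Sn}^{-1}]$ need not be faithful. Second, your derivation of the error term invokes a contravariantly finite rigid subcategory $\X$ and $\X$-approximation triangles, but the theorem is stated for an arbitrary extension-closed $\N$ with $\cone(\N,\N)=\C$, and such $\N$ need not be of the form $\X^{\perp_0}$ (\cref{example:extension-closed-not-rigid-perp}); the paper instead uses $\cone(\N,\N)=\C$ directly to produce a triangle $N_1\to N_0\to B[1]\to N_1[1]$ with $N_i\in\N$, forms a homotopy pullback via the octahedral axiom, and extracts three $\fs^R_\N$-triangles giving $[A]^R_\N-[B]^R_\N+[C]^R_\N=[X]^R_\N+[X[-1]]^R_\N$ with $Q(X)\cong\Im Q(h)$ (\cref{thm_JS_additivity}).
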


We recover the additivity formula with error term for the index $[-]^\X_R$ (see \cite[Thm.~A]{JS23}) as a special case of \cref{mainthm:additivity}; see \cref{rem:connection-to-JS}. 
Moreover, the theory of \cite{NOS22} allows us to simplify some arguments.

The homomorphism $\theta^R_\N$ captures the kernel of the  canonical surjection $\pi^R_{\N}\colon K_0(\C,\BE^R_\N,\fs^R_\N)\to K_0(\C)$, as the next main result shows. This is part of \cref{prop_comparison}.

\begin{mainprop}
\label{mainprop:E}
The sequence
$
\begin{tikzcd}[ampersand replacement=\&,column sep=0.7cm,cramped]
        K_0(\C/\N)
            \arrow{r}{\theta^R_\N}
        \& K_{0}(\C,\BE^R_\N,\fs^R_\N)
            \arrow{r}{\pi^R_{\N}}
        \& K_0(\C)
            \arrow{r} 
        \& 0
\end{tikzcd}
$
is right exact. 
\end{mainprop}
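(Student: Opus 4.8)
The plan is to establish right exactness at each of the three non-trivial spots, working from the right. Surjectivity of $\pi^R_{\N}$ is essentially immediate: the relative extriangulated structure $(\C,\BE^R_\N,\fs^R_\N)$ has the same underlying additive category as $\C$, and every $\fs^R_\N$-triangle is in particular a triangle in $\C$, so the identity on objects induces a surjective (indeed well-defined) homomorphism $K_0(\C,\BE^R_\N,\fs^R_\N)\to K_0(\C)$ sending $[C]^R_\N\mapsto[C]$; it is surjective because both groups are generated by isomorphism classes of the same objects. Exactness at $K_0(\C)$ is therefore just this surjectivity claim.

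The substance is exactness at $K_{0}(\C,\BE^R_\N,\fs^R_\N)$, i.e.\ $\ker\pi^R_{\N}=\Im\theta^R_\N$. For the inclusion $\Im\theta^R_\N\sse\ker\pi^R_{\N}$, I would use \cref{mainthm:additivity}: for any triangle $A\to B\to C\to A[1]$ in $\C$, applying $\pi^R_{\N}$ to the formula $[A]_\N^R-[B]_\N^R+[C]_\N^R=\theta^R_\N(\Im Q(h))$ gives $[A]-[B]+[C]=0$ in $K_0(\C)$, which holds because $A\to B\to C\to A[1]$ is a genuine triangle in $\C$ (and $K_0(\C)$ here means the Grothendieck group of the triangulated structure, where such relations are imposed). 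Since $\theta^R_\N$ is a homomorphism and its domain $K_0(\C/\N)$ is generated by classes of objects $Q(X)$, and every object of the abelian category $\C/\N$ arises as $\Im Q(h)$ for a suitable triangle — here I would invoke the description of $\C/\N$ from \cref{N-serre-local-is-abelian} / \eqref{eqn:localization-of-C}, in which objects of $\C/\N$ are images of morphisms of $\C$ — it follows that $\pi^R_{\N}\circ\theta^R_\N=0$, giving $\Im\theta^R_\N\sse\ker\pi^R_{\N}$.

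For the reverse inclusion $\ker\pi^R_{\N}\sse\Im\theta^R_\N$, the idea is to pass to the quotient group and show $\theta^R_\N$ surjects onto it. An element of $\ker\pi^R_{\N}$ is a $\BZ$-linear combination $\sum n_i[C_i]^R_\N$ with $\sum n_i[C_i]=0$ in $K_0(\C)$. By the standard presentation of the Grothendieck group of a triangulated category, $\sum n_i[C_i]=0$ means this combination is a finite $\BZ$-linear combination of the defining relations $[A]-[B]+[C]$ coming from triangles $A\to B\to C\to A[1]$ in $\C$. Lifting each such relation back to $K_{0}(\C,\BE^R_\N,\fs^R_\N)$, \cref{mainthm:additivity} tells us that $[A]^R_\N-[B]^R_\N+[C]^R_\N=\theta^R_\N(\Im Q(h))$ lies in $\Im\theta^R_\N$. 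Hence $\sum n_i[C_i]^R_\N$ differs from a suitable element of $\Im\theta^R_\N$ by something that maps to $\sum n_i[C_i]^R_\N$ under... more precisely, the difference $\sum n_i[C_i]^R_\N - (\text{element of }\Im\theta^R_\N)$ is a combination of the \emph{split} relations in $K_{0}(\C,\BE^R_\N,\fs^R_\N)$, but split $\fs^R_\N$-triangles are split triangles in $\C$ and thus already trivial in $K_0(\C,\BE^R_\N,\fs^R_\N)$; so in fact $\sum n_i[C_i]^R_\N\in\Im\theta^R_\N$. The one point requiring care is the bookkeeping: I must check that the \emph{relative} relations defining $K_{0}(\C,\BE^R_\N,\fs^R_\N)$, which only involve $\fs^R_\N$-triangles, are enough — but every element of the kernel is already witnessed by honest triangles of $\C$ via \cref{mainthm:additivity}, so no relative relation beyond those is needed.

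The main obstacle I anticipate is the second-to-last point: ensuring that the relations used to express $\sum n_i[C_i]=0$ in $K_0(\C)$ can be lifted \emph{compatibly} to $K_{0}(\C,\BE^R_\N,\fs^R_\N)$, i.e.\ that applying \cref{mainthm:additivity} term-by-term to a presentation of $0$ in $K_0(\C)$ produces an element of $\Im\theta^R_\N$ and not merely something in $\ker\pi^R_{\N}$. The clean way around this is to observe that \cref{mainthm:additivity} shows the composite $K_0(\C/\N)\xrightarrow{\theta^R_\N}K_{0}(\C,\BE^R_\N,\fs^R_\N)\xrightarrow{\pi^R_{\N}}K_0(\C)$ is zero and that $\pi^R_{\N}$ induces an isomorphism $\operatorname{coker}\theta^R_\N\xrightarrow{\sim}K_0(\C)$: indeed $\operatorname{coker}\theta^R_\N$ has a presentation with generators $[C]$, $C\in\C$, and relations forced by \cref{mainthm:additivity} to include all triangle relations of $K_0(\C)$, and conversely any relation in $\operatorname{coker}\theta^R_\N$ is visibly a consequence of triangle relations in $\C$ together with split relations (which hold in $K_0(\C)$); comparing presentations yields the isomorphism, whence exactness at the middle term. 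This presentation-comparison argument sidesteps the explicit lifting bookkeeping.
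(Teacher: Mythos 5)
Your proposal is correct and follows essentially the same route as the paper: both directions hinge on the additivity formula of \cref{thm_JS_additivity} (equivalently \cref{mainthm:additivity}) to show that the triangle relators generating $\Ker\pi^R_\N$ lie in $\Im\theta^R_\N$, and on the rotated triangle $C[-1]\to 0\to C\xrightarrow{\id_C} C$ to get the containment $\Im\theta^R_\N\sse\Ker\pi^R_\N$. The only difference is presentational: your explicit presentation-comparison (identifying $\operatorname{coker}\theta^R_\N$ with $K_0(\C)$) just spells out the standard fact, used implicitly in the paper, that $\Ker\pi^R_\N$ is generated by the images of the relators $[A]^R_\N-[B]^R_\N+[C]^R_\N$ coming from triangles in $\C$.
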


Various indices have also appeared recently in higher homological algebra, e.g.\ \cite{Rei20a,Rei20b,Fed21,Jor21,JS21, FJS24b}.
In \S\ref{subsec_d_angulated_cat} 
we add our index to this landscape. 
We focus again on the $n$-cluster tilting situation,  extending and strengthening some results of Fedele \cite[Thm.~C, Prop.~3.5]{Fed21}, and also a result of Palu \cite[Lem.~9]{Pal09} when $n=2$.
That we can use the right exact sequence in \cref{mainprop:E} to improve and/or strengthen these results adds to the mounting evidence that index theory for triangulated categories should be viewed through the lens of extriangulated categories.

In the last section, we turn our attention to the abelian setting and, using a straightforward application of \cref{ThmB}, deduce an analogue of \eqref{PPPP-index-isom-intro} for an $n$-cluster tilting subcategory in an abelian category; see \cref{thm:abelian-n-CT-isomorphism}. This isomorphism is induced by the index defined by Reid in \cite[Sec.~1]{Rei20b}; see \cref{rem:relation-to-Reid-index}.

\medskip
\noindent
{\bf Notation and conventions.}
All categories and functors in this article are always assumed to be additive, and subcategories will always be full and closed under isomorphisms in the ambient category. 
For a category $\C$, we denote the class of all morphisms in $\C$ by $\Mor\C$, and $\mod\C$ is the category of finitely presented contravariant functors from $\C$ to the abelian category $\Ab$ of abelian groups.
In addition, if $\A\sse\C$ is an additive subcategory, then we denote by $[\A]$ the (two-sided) ideal of morphisms in $\C$ that factor through an object in $\A$. 
The canonical additive quotient functor is denoted $\overline{(-)}\colon \C\to \C/[\A]$, so that 
the image in $\C/[\A]$ of a morphism $f\in\C(A,B)$ is denoted $\overline{f}$.

\section{Extriangulated categories}
\label{subsec_loc_of_ET}
An \emph{extriangulated category} is defined to be an additive category $\C$ equipped with
\begin{enumerate}
\item a biadditive functor $\mathbb{E}\colon\C^{\op}\times\C\to \Ab$, where $\Ab$ is the category of abelian groups, and 
\item a correspondence $\mathfrak{s}$ that associates an equivalence class $\fs(\delta) = [A\overset{f}{\lra} B\overset{g}{\lra} C]$ of a sequence  $A\overset{f}{\lra} B\overset{g}{\lra} C$ in $\C$ to each element $\delta\in\mathbb{E}(C,A)$ for any $A,C\in\C$,
\end{enumerate}
where the triplet $(\C,\mathbb{E},\mathfrak{s})$ satisfies the axioms laid out in \cite[Def.~2.12]{NP19}.
We only recall some terminology and basic properties here, referring the reader to \cite{NP19} for an in-depth treatment. 
For any $A,C\in\C$, two sequences 
$A\overset{f}{\lra} B\overset{g}{\lra} C$ 
and 
$A\overset{f'}{\lra} B'\overset{g'}{\lra} C$
in $\C$ 
are said to be \emph{equivalent} if there exists an isomorphism $b\in\C(B,B')$
such that $f'=b\circ f$ and $g=g'\circ b$.

An extriangulated category $(\C,\mathbb{E},\mathfrak{s})$ is simply denoted by $\C$ if there is no confusion. 
For the remainder of \S\ref{subsec_loc_of_ET}, let $\C = (\C,\BE,\fs)$ be an extriangulated category. 

\begin{definition}
We recall the following terminology. 
\begin{enumerate}
\item
An element $\delta\in\mathbb{E}(C, A)$ is called an \emph{$\mathbb{E}$-extension} for any $A,C\in\C$.
For $a\in\C(A,A')$ and $c\in\C(C',C)$, we write as $a_*\delta\deff\mathbb{E}(C,a)(\delta)$ and $c^*\delta\deff\mathbb{E}(c,A)(\delta)$.
\item
If $\delta\in\BE(C,A)$, then a sequence $A\overset{f}{\lra}B\overset{g}{\lra}C$ with $\fs(\delta) = [A\overset{f}{\lra}B\overset{g}{\lra}C]$ 
is called an $\mathfrak{s}$-\textit{conflation}, and in addition $f$ is called an \emph{$\mathfrak{s}$-inflation} and $g$ an \emph{$\mathfrak{s}$-deflation}.
The pair $\langle A\overset{f}{\lra}B\overset{g}{\lra}C, \delta\rangle$ 
is often denoted by 
$A\overset{f}{\lra}B\overset{g}{\lra}C\overset{\delta}{\dashrightarrow}$ 
and we call it an \emph{$\mathfrak{s}$-triangle}.
\item
A \emph{morphism} of $\mathfrak{s}$-triangles from $\langle A\overset{f}{\lra}B\overset{g}{\lra}C, \delta\rangle$ to $\langle A'\overset{f'}{\lra}B'\overset{g'}{\lra}C', \delta' \rangle$ is a triplet $(a,b,c)$ of morphisms in $\C$ 
with $a_{*}\delta = c^{*}\delta'$ and so that the following diagram commutes. 
\[
\begin{tikzcd}
    A 
        \arrow{r}{f}
        \arrow{d}{a}
    & B 
        \arrow{r}{g}
        \arrow{d}{b}
    & C 
        \arrow[dashed]{r}{\delta}
        \arrow{d}{c}
    & {}\\
    A' 
        \arrow{r}{f'}
    & B'
        \arrow{r}{g'}
    & C' 
        \arrow[dashed]{r}{\delta'}
    & {}
\end{tikzcd}
\]
\end{enumerate}
\end{definition}

We remark that if $A\overset{f}{\lra}B\overset{g}{\lra}C$ is an $\fs$-conflation, then $f$ is a weak kernel of $g$ and $g$ is a weak cokernel of $f$ (see \cite[Prop.~3.3]{NP19}). Recall that a \emph{weak kernel of g} is a morphism $K\overset{k}{\lra}B$ with $gk=0$, and such that any morphism $x\in\C(X,B)$ with $gx=0$ factors (not necessarily uniquely) through $k$. A \emph{weak cokernel} is defined dually. Weak (co)kernels are not necessarily uniquely determined up to isomorphism, unlike (co)kernels.

\begin{definition}
Let $A\overset{f}{\lra} B\overset{g}{\lra} C$ be an $\mathfrak{s}$-conflation.
Then we call $C$ a \emph{cone} of $f$ and put $\cone(f)\deff C$.
Similarly, we denote the object $A$ by $\cocone(g)$ and call it a \emph{cocone} of $g$.
We note that this notation is justified since a cone of $f$ (resp.\ cocone of $g$) is uniquely determined up to isomorphism (see \cite[Rem.~3.10]{NP19}).
For any subcategories $\U$ and $\V$ in $\C$, we denote by $\cone(\V,\U)$ the subcategory consisting of objects $X$ appearing in an $\mathfrak{s}$-conflation $V\lra U\lra X$ with $U\in\U$ and $V\in\V$.
The subcategory $\cocone(\V,\U)$ is defined similarly. 
\end{definition}

Note that if $\U$ and $\V$ are additive, then so are $\cone(\V,\U)$ and $\cocone(\V,\U)$. 
However, the subcategories $\cone(\V,\U)$ and $\cocone(\V,\U)$ are not necessarily closed under direct summands in general.

\begin{definition}\label{def:closed-under-cones}
    A subcategory $\X$ of $\C$ is said to be \emph{closed under taking cones of $\fs$-inflations} if whenever 
$\begin{tikzcd}[column sep=0.5cm]
    A \arrow{r}{f}&B\arrow{r}{g}&C
\end{tikzcd}$
is an $\fs$-conflation with $A,B\in\X$, then we have $C\in\X$.
Being \emph{closed under taking cocones of $\fs$-deflations} is defined dually.
\end{definition}

Triangulated (resp.\ exact) structures on an additive category $\C$ naturally give rise to extriangulated structures on $\C$. 

\begin{example}
\cite[Prop.~3.22]{NP19} 
    Suppose $\C$ is a triangulated category with suspension functor $[1]$ and put 
    $\BE(C,A)\deff\C(C,A[1])$ for $A,C\in\C$. 
    Then there is an extriangulated category $(\C,\BE,\fs)$, 
    where 
    $A\overset{f}{\lra}B\overset{g}{\lra}C\overset{h}{\dashrightarrow}$ 
    is an $\mathfrak{s}$-triangle 
    if and only if 
    $A\overset{f}{\lra}B\overset{g}{\lra}C\overset{h}{\lra}A[1]$
    is a distinguished triangle in $\C$.
    In this case, we say that the extriangulated category/structure $(\C,\BE,\fs)$ \emph{corresponds to a triangulated category}.
\end{example}

\begin{example}
\label{example:exact-is-extri}
\cite[Exam.~2.13]{NP19}
    Suppose $(\A,\E)$ is an exact category. Consider the collection 
    $\BE(C,A)\deff\Set{[A\overset{f}{\lra} B\overset{g}{\lra} C]|A\overset{f}{\lra} B\overset{g}{\lra} C \text{ lies in }\E}$. 
    If this forms a set for all $A,C\in\A$, then there is an extriangulated category $(\A,\BE,\fs)$, 
    where 
    $A\overset{f}{\lra}B\overset{g}{\lra}C\overset{\delta}{\dashrightarrow}$ 
    is an $\mathfrak{s}$-triangle 
    if and only if 
    $A\overset{f}{\lra}B\overset{g}{\lra}C$
    is a conflation in $\E$.
    In this case, we say that the extriangulated category/structure $(\A,\BE,\fs)$ \emph{corresponds to an exact category}. 
    In addition, if $(\A,\E)$ is in fact abelian, then we say $(\A,\BE,\fs)$ \emph{corresponds to an abelian category}. 
    Note that the set-theoretic assumption above is satisfied if $\A$ is skeletally small, or if $\A$ has enough projectives or enough injectives.

\end{example}

One of the advantages of revealing an extriangulated structure lies in the fact that it is closed under certain key operations: 
taking extension-closed subcategories; 
passing to a substructure using relative theory; 
taking certain ideal quotients; 
and localization. 
We now describe the parts of the first two of these operations relevant for our intentions, and localization is treated separately in \S\ref{subsub_localization}.

\subsection{Extension-closed subcategories}
\label{sec:ext-closed-extri-cats}

A subcategory $\N$ of $(\C,\BE,\fs)$ is called \emph{extension-closed} if 
\begin{enumerate}[label=\textup{(\roman*)}]
    \item $\N$ is additive and closed under isomorphisms in $\C$, and 
    \item for any $\fs$-conflation $A\lra B\lra C$, if $A,C\in\N$ then $B\in\N$. 
\end{enumerate}
If the extriangulated structure on $\C$ is understood and no confusion may arise, we say that $\N$ is an extension-closed subcategory of $\C$. 
An extension-closed subcategory of an extriangulated category inherits an extriangulated structure in a canonical way.

\begin{proposition}
\label{prop_ext_closed_ET}
\cite[Rem.~2.18]{NP19}
Suppose $\N$ is an extension-closed subcategory of $\C$.
Define $\mathbb{E}|_\N$ to be the restriction of $\mathbb{E}$ to $\N^{\op}\times \N$, and similarly $\mathfrak{s}|_\N\deff\mathfrak{s}|_{\mathbb{E}|_\N}$. 
Then $(\N,\mathbb{E}|_\N,\mathfrak{s}|_\N)$ is an extriangulated category.
\end{proposition}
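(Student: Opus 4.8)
The plan is to define the candidate structure on $\N$ purely by restriction and then check the extriangulated axioms one by one, with the extension-closedness of $\N$ being the only nontrivial ingredient. Since $\N$ is a full additive subcategory of $\C$, the restriction $\BE|_\N\colon\N^{\op}\times\N\to\Ab$ of the biadditive functor $\BE$ is again biadditive, so (ET1) is immediate.

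The first substantive point is that $\fs|_\N$ is a well-defined additive realization of $\BE|_\N$. Fix $\delta\in\BE|_\N(C,A)=\BE(C,A)$ with $A,C\in\N$, and let $A\xrightarrow{f}B\xrightarrow{g}C$ be a representative of $\fs(\delta)$. This is an $\fs$-conflation with $A,C\in\N$, so $B\in\N$ by extension-closedness; moreover, since $\N$ is full and closed under isomorphisms in $\C$, two such sequences with all terms in $\N$ are equivalent in $\N$ if and only if they are equivalent in $\C$. Hence $\fs|_\N(\delta)\deff\fs(\delta)$ is a legitimate equivalence class of a sequence lying entirely in $\N$. The compatibility condition built into the notion of a realization transfers for free: given $(a,c)$ with $a\in\N(A,A')$, $c\in\N(C,C')$ and $a_*\delta=c^*\delta'$, the morphism $b\in\C(B,B')$ furnished by $\fs$ in $\C$ automatically lies in $\N$ by fullness, and the equality $a_*\delta=c^*\delta'$ already holds in $\BE|_\N$. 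Additivity of $\fs|_\N$ is clear, since the split conflation $A\to A\oplus C\to C$ and the direct sum of two $\fs$-conflations have middle terms $A\oplus C$ and $B\oplus B'$ in $\N$. This establishes (ET2).

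For (ET3) and its dual (ET3$^{\op}$), I would take the commutative square of morphisms in $\N$ associated to two $\fs|_\N$-triangles, apply (ET3) (resp.\ (ET3$^{\op}$)) in $\C$ to produce the remaining morphism, observe that it lies in $\N$ by fullness, and note that the accompanying identity of $\BE$-extensions holds verbatim in $\BE|_\N$. For (ET4) and (ET4$^{\op}$): starting from two composable $\fs|_\N$-triangles $A\xrightarrow{f}B\xrightarrow{f'}D\overset{\delta}{\dashrightarrow}$ and $B\xrightarrow{g}C\xrightarrow{g'}F\overset{\rho}{\dashrightarrow}$ with all terms in $\N$, apply (ET4) in $\C$. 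The object $E$ it produces sits in an $\fs$-triangle $A\xrightarrow{h}C\xrightarrow{h'}E\overset{\delta'}{\dashrightarrow}$ with $h=g\circ f$ and $A,C\in\N$, so $E\in\N$ by extension-closedness; consequently the other new $\fs$-triangle $D\xrightarrow{d}E\xrightarrow{e}F\overset{f'_{*}\rho}{\dashrightarrow}$ also has all terms in $\N$, the morphisms $d$ and $e$ lie in $\N$ by fullness, and the extension identities $d^*\delta'=\delta$ and $e^*\rho=f_*\delta'$ carry over unchanged. Thus the octahedral diagram obtained in $\C$ is already a diagram in $\N$ realizing (ET4) for $\fs|_\N$, and dually for (ET4$^{\op}$).

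I do not expect a real obstacle here: every verification has the same shape, namely "run the construction in $\C$, then observe that all objects and morphisms it produces remain in $\N$." The hypothesis is genuinely used only to keep the middle term of a conflation — and the octahedral object $E=\cone(g\circ f)$ — inside $\N$, while fullness and closure under isomorphisms of $\N$ take care of everything else. This is also precisely why an additive subcategory that fails to be extension-closed cannot in general inherit an extriangulated structure by restriction.
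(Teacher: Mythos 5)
Your overall route is the expected one --- the paper offers no argument of its own here and simply cites \cite[Rem.~2.18]{NP19} --- and most of your verifications (well-definedness of $\fs|_\N$ via fullness and closure under isomorphisms, the realization compatibility, additivity, (ET3) and its dual) are fine. However, your treatment of (ET4) contains an invalid step. You conclude $E\in\N$ from the $\fs$-triangle $A\overset{h}{\lra}C\overset{h'}{\lra}E\overset{\delta'}{\dra}$ ``with $A,C\in\N$, so $E\in\N$ by extension-closedness''. In that conflation $C$ is the \emph{middle} term and $E$ is the cone; extension-closedness only lets you deduce membership of the middle term from membership of the two \emph{end} terms. The inference you used is precisely the stronger property of being closed under taking cones of $\fs$-inflations (\cref{def:closed-under-cones}), which extension-closed subcategories need not enjoy: in a triangulated category every morphism is an inflation, so your inference pattern applied to the conflation $N\lra 0\lra N[1]$ (both $N$ and $0$ in $\N$) would force $\N[1]\sse\N$, which already fails for $\N=\mod\Lambda$ inside the bounded derived category as in \cref{example:extension-closed-not-rigid-perp}; the paper's \cref{example:not-Serre-in-CR} makes the same point for cocones.

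The gap is local and easily closed: read $E\in\N$ off the \emph{other} new $\fs$-triangle produced by (ET4), namely $D\overset{d}{\lra}E\overset{e}{\lra}F\overset{f'_{*}\rho}{\dra}$, whose end terms $D$ and $F$ are already known to lie in $\N$; extension-closedness then places the middle term $E$ in $\N$. After that, both new triangles are sequences in $\N$ realizing $\BE|_\N$-extensions ($\delta'\in\BE|_\N(E,A)$ and $f'_{*}\rho\in\BE|_\N(F,D)$), and your remaining remarks --- $d,e\in\N$ by fullness, and the identities $d^{*}\delta'=\delta$, $e^{*}\rho=f_{*}\delta'$ holding verbatim --- go through unchanged. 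The dual fix applies to (ET4$^{\op}$), where the sought object is again the middle term of a conflation whose two ends lie in $\N$. With this correction your proof is exactly the standard restriction argument underlying \cite[Rem.~2.18]{NP19}.
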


\subsection{Relative theory}
\label{subsec_rel_theory}

By a relative extriangulated structure, we mean a ``coarser'' or ``less refined'' structure, analogously to what is usually meant in topology. 
Relative theory for triangulated (resp.\ exact) categories has been considered in various contexts, e.g.\ \cite{Bel00, Kra00} 
(resp.\ \cite{AS93, DRSSK99}).
More recently, relative theory for $n$-exangulated categories ($n\geq1$ an integer) was introduced in \cite{HLN21}. 
A triplet $(\C,\BE,\fs)$ is extriangulated if and only if it is $1$-exangulated \cite[Prop.~4.3]{HLN21}, and thus we can employ this relative theory.
The next result follows from \cite[Prop.~3.16]{HLN21}.

\begin{proposition}
\label{prop_relative_ET}
The following conditions are equivalent for an additive subfunctor $\mathbb{F}\subseteq \mathbb{E}$.
\begin{enumerate}[label={\textup{(\arabic*)}}]
\item
$(\C,\mathbb{F},\mathfrak{s}|_\mathbb{F})$ forms an extriangulated category, where $\mathfrak{s}|_\mathbb{F}$ is the restriction of $\mathfrak{s}$ to $\mathbb{F}$.
\item
$\mathfrak{s}|_\mathbb{F}$-inflations are closed under composition.
\item
$\mathfrak{s}|_\mathbb{F}$-deflations are closed under composition.
\end{enumerate}
\end{proposition}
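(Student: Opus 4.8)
The plan is to obtain the statement as the $n=1$ instance of a result on $n$-exangulated categories. By \cite[Prop.~4.3]{HLN21}, a triplet $(\C,\mathbb{E},\mathfrak{s})$ is extriangulated precisely when it is $1$-exangulated, and under this identification an additive subfunctor $\mathbb{F}\subseteq\mathbb{E}$ of $(\C,\mathbb{E},\mathfrak{s})$ is precisely an additive subfunctor in the $1$-exangulated sense, with $\mathfrak{s}|_\mathbb{F}$-inflations and $\mathfrak{s}|_\mathbb{F}$-deflations matching the $1$-exangulated notions of $\mathbb{F}$-inflation and $\mathbb{F}$-deflation. Thus \cite[Prop.~3.16]{HLN21}, read with $n=1$, gives the equivalence of (1), (2) and (3) at once; I would take this as the proof, and sketch below the shape of a self-contained argument for a reader who prefers to stay within extriangulated categories.

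First, $(1)\Rightarrow(2)$ and $(1)\Rightarrow(3)$ are immediate: in any extriangulated category the $\mathfrak{s}$-inflations (resp.\ $\mathfrak{s}$-deflations) are closed under composition---a standard consequence of \ref{ET4} and (ET$4^{\op}$), see \cite{NP19}---so these hold for $(\C,\mathbb{F},\mathfrak{s}|_\mathbb{F})$ once it is known to be extriangulated. Next, working in the opposite category $\C^{\op}$---again extriangulated, with biadditive functor $\mathbb{E}^{\op}(A,C)=\mathbb{E}(C,A)$ and additive subfunctor $\mathbb{F}^{\op}\subseteq\mathbb{E}^{\op}$, under which $\mathfrak{s}|_\mathbb{F}$-inflations and $\mathfrak{s}|_\mathbb{F}$-deflations are interchanged and (1) is unchanged---one sees that it is enough to prove $(2)\Rightarrow(1)$; then $(3)\Rightarrow(1)$ follows by duality and the equivalence is formal. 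For $(2)\Rightarrow(1)$ I would verify the axioms of \cite[Def.~2.12]{NP19} for $(\C,\mathbb{F},\mathfrak{s}|_\mathbb{F})$: biadditivity of $\mathbb{F}$ and the property that $\mathfrak{s}|_\mathbb{F}$ is an additive realization are inherited from $(\C,\mathbb{E},\mathfrak{s})$ by restriction, and for the completion axioms preceding \ref{ET4} and their duals the completing morphism of $\mathfrak{s}$-triangles already exists in $(\C,\mathbb{E},\mathfrak{s})$, with the requisite identity of $\mathbb{E}$-extensions descending automatically to the subgroup $\mathbb{F}$.

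The heart of the matter is \ref{ET4} and (ET$4^{\op}$) for $\mathfrak{s}|_\mathbb{F}$. I would first record a ``Key Lemma'': for an $\mathfrak{s}$-triangle $A\overset{x}{\lra}B\overset{y}{\lra}C\overset{\delta}{\dra}$, the conditions $\delta\in\mathbb{F}$, ``$x$ is an $\mathfrak{s}|_\mathbb{F}$-inflation'', and ``$y$ is an $\mathfrak{s}|_\mathbb{F}$-deflation'' are equivalent; the non-trivial implications come from comparing the given $\mathfrak{s}$-triangle with an $\mathfrak{s}|_\mathbb{F}$-triangle sharing its inflation (resp.\ deflation), using the completion axioms of \cite[Def.~2.12]{NP19} and a five-lemma-type argument to identify the two, and transporting membership in $\mathbb{F}$ along the resulting isomorphism (here one uses that $\mathbb{F}$ is a subfunctor). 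Granting this, \ref{ET4} for $\mathfrak{s}|_\mathbb{F}$ is quick: applying \ref{ET4} in $(\C,\mathbb{E},\mathfrak{s})$ to the two given $\mathfrak{s}|_\mathbb{F}$-triangles (in the notation of \ref{ET4}), the connecting $\mathfrak{s}$-triangle has $\mathbb{E}$-extension $f'_*\rho$, which lies in $\mathbb{F}$ because $\rho$ does and $\mathbb{F}$ is a subfunctor, while the composite $\mathfrak{s}$-triangle has $\mathfrak{s}$-inflation $gf$, which is an $\mathfrak{s}|_\mathbb{F}$-inflation by hypothesis (2), so its $\mathbb{E}$-extension lies in $\mathbb{F}$ by the Key Lemma. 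I expect the real obstacle to be (ET$4^{\op}$): it forces the dual statement that $\mathfrak{s}|_\mathbb{F}$-deflations are closed under composition, and deriving this from (2) is not formal---one must run the octahedral construction of \ref{ET4} on the cocone $\mathbb{F}$-conflations of two composable $\mathfrak{s}|_\mathbb{F}$-deflations, translate via the Key Lemma, and track carefully which newly produced $\mathbb{E}$-extensions are forced into $\mathbb{F}$. This bookkeeping is exactly what the proof of \cite[Prop.~3.16]{HLN21} carries out in $n$-exangulated generality, so I would in the end prefer simply to invoke that result.
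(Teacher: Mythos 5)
Your proof is correct and matches the paper's approach: the paper likewise obtains this proposition directly from \cite[Prop.~3.16]{HLN21}, using the identification of extriangulated categories with $1$-exangulated categories via \cite[Prop.~4.3]{HLN21}. The additional self-contained sketch you outline is a reasonable bonus but is not what the paper does, and your final decision to simply invoke the cited result is exactly the paper's argument.
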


If an additive subfunctor $\mathbb{F}\sse\BE$ satisfies the equivalent conditions of \cref{prop_relative_ET}, then it is called \emph{closed}. 
Furthermore, we say that the extriangulated structure/category $(\C,\mathbb{F},\mathfrak{s}|_\mathbb{F})$ is \emph{relative to} or \emph{a relative theory of} $(\C,\mathbb{E},\mathfrak{s})$.

For the remainder of \S\ref{subsec_rel_theory}, we assume $(\C,\BE,\fs)$ corresponds to a triangulated category with suspension $[1]$. 
We now recall how a subcategory $\X\sse\C$ determines relative extriangulated structures on $\C$.

\begin{definition}
\label{def_relative1}
For objects $A,C\in\C$, we define subsets of $\mathbb{E}(C,A)$ as follows:
\begin{align*}
\mathbb{E}_L^\X(C,A)&\deff
	\Set{h\in\mathbb{E}(C,A) |  h[-1]\circ x=0\textnormal{\ for all\ }x\colon X\to C[-1]\textnormal{\ with\ }X\in\X }\text{, and}\\
\mathbb{E}_R^\X(C,A)&\deff
\Set{h\in\mathbb{E}(C,A) | h\circ x=0\textnormal{\ for all\ }x\colon X\to C\textnormal{\ with\ }X\in\X}.
\end{align*}
\end{definition}

These give rise to closed subfunctors $\mathbb{E}_L^\X$ and $\mathbb{E}_R^\X$ of $\mathbb{E}$ by \cite[Prop.~3.19]{HLN21}.
Actually, they coincide with the closed subfunctors $\BE_{\X[1]}$ and $\BE_{\X}$ in the notation \cite[Def.~3.18]{HLN21}, respectively. 
In particular, putting $\mathbb{E}^\X\deff\mathbb{E}_L^\X\cap\mathbb{E}_R^\X$, we have three extriangulated substructures  
\[
\C_L^\X\deff(\C,\mathbb{E}_L^\X,\mathfrak{s}_L^\X),\quad 
	\C_R^\X\deff(\C,\mathbb{E}_R^\X,\mathfrak{s}_R^\X)\quad \text{and} \quad
	\C^\X\deff(\C,\mathbb{E}^\X,\mathfrak{s}^\X)
\]
on $\C$, 
which are relative to $(\C,\mathbb{E},\mathfrak{s})$. 
In fact, by \cite[Thm.~2.12]{JS21}, these are extriangulated subcategories of $(\C,\mathbb{E},\mathfrak{s})$ in the sense of \cite[Def.~3.7]{Hau21}.

\section{Localization of extriangulated categories}
\label{subsub_localization}

In the pursuit of unifying Verdier \cite{Ver96} and Serre quotients \cite{Gab62}, the localization theory of extriangulated categories with respect to suitable classes of morphisms was introduced in \cite{NOS22}. 
Since we will not need the theory of \cite{NOS22} in full generality, we only provide \cref{Thm_Mult_Loc}, which follows from the main results of  \cite{NOS22}. 
Furthermore, in \S\ref{subsec_abel_loc_of_tri} and \S\ref{sec_abelian_local_of_tri_cats} we specialize to the case of the localization of a triangulated category by an extension-closed subcategory as investigated in \cite{Oga22b}.

The following notion of an exact functor generalizes the classical ones when both  $(\C,\mathbb{E},\mathfrak{s})$ and $(\C',\mathbb{E}',\mathfrak{s}')$ correspond to exact or triangulated categories.

\begin{definition}
\label{Def_exact_functor}
\label{exact-functor}
\cite[Def.~2.32]{B-TS21} 
Let $(\C,\mathbb{E},\mathfrak{s})$ and $(\C',\mathbb{E}',\mathfrak{s}')$ be extriangulated categories.
An \emph{exact functor} $(F,\phi)\colon (\C,\mathbb{E},\mathfrak{s})\to(\C',\mathbb{E}',\mathfrak{s}')$ is a pair consisting of an additive functor $F\colon \C\to\C'$ and a natural transformation $\phi\colon \mathbb{E}\Rightarrow\mathbb{E}'\circ(F^{\op}\times F)$, 
which satisfies
\(
\mathfrak{s}'(\phi_{C,A}(\delta))
    =[FA\overset{Ff}{\lra}FB\overset{Fg}{\lra}FC]
\)
whenever 
$A\overset{f}{\lra}B\overset{g}{\lra}C\overset{\delta}{\dashrightarrow}$ 
is an $\mathfrak{s}$-triangle. 
\end{definition}

One can compose exact functors in the obvious way to obtain another exact functor; see \cite[Def.~2.11]{NOS22}, also \cite[Lem.~3.19]{BTHSS23}.
Extension-closure and relative theory provide typical examples of exact functors.

\begin{example}
\label{ex_exact_functor}
Let $(\C,\mathbb{E},\mathfrak{s})$ be any extriangulated category. 
\begin{enumerate}[label=\textup{(\arabic*)}]
\item
Let $\N\sse\C$ be extension-closed and consider the extriangulated category $(\N,\mathbb{E}|_\N,\mathfrak{s}|_\N)$; see \cref{prop_ext_closed_ET}. 
The canonical inclusion functor $\inc\colon\N\to\C$ induces an exact functor 
$(\inc,\iota)\colon (\N,\mathbb{E}|_\N,\mathfrak{s}|_\N)\to(\C,\mathbb{E},\mathfrak{s})$, where $\iota\colon \BE|_\N \Rightarrow \BE$ is the canonical inclusion natural transformation.

\item
For a closed subfunctor $\mathbb{F}\subseteq\mathbb{E}$ and the relative extriangulated category $(\C,\mathbb{F},\mathfrak{s}|_\mathbb{F})$, 
the identity $\id_{\C}$ and inclusion $\BF\sse\BE$ constitute an exact functor $(\C,\mathbb{F},\mathfrak{s}|_\mathbb{F})\to(\C,\mathbb{E},\mathfrak{s})$. 

\end{enumerate}
In these situations, both $(\N,\mathbb{E}|_\N,\mathfrak{s}|_\N)$ and $(\C,\mathbb{F},\mathfrak{s}|_\mathbb{F})$ are extriangulated subcategories of $(\C,\mathbb{E},\mathfrak{s})$.
\end{example}

To avoid any set-theoretic problem, we will work under the following setup when considering the localization in the sense of \cite{GZ67}.

\begin{setup}
    We let $(\C,\BE,\fs)$ denote a skeletally small extriangulated category. 
\end{setup}

\cref{Thm_Mult_Loc} recalls sufficient conditions on the pair $(\C,\N)$, where $\N$ is a subcategory of $\C$, to give rise to an extriangulated ``quotient'' category of $\C$ by $\N$. We now lay out the terminology and notation necessary to state this result.

\begin{definition}
\label{def_thick}
\cite[Def.~4.1]{NOS22} 
An additive subcategory $\N$ of $(\C,\BE,\fs)$ is called \emph{thick} if it is closed under direct summands, and $\N$ satisfies the \emph{$2$-out-of-$3$ property} for $\mathfrak{s}$-conflations, that is, for any $\mathfrak{s}$-conflation $A\lra B\lra C$, if any two of $A, B$ or $C$ belong to $\N$, then so does the third.
\end{definition}

Notice that any thick subcategory $\N$ of $\C$ is automatically extension-closed by definition. 
In the case that $(\C,\BE,\fs)$ corresponds to a triangulated category, the notion of a thick subcategory as in \cref{def_thick} coincides with the usual one for triangulated categories.

In addition, a thick subcategory $\N\sse\C$ is said to be \emph{Serre} if whenever $A\lra B\lra C$ is an $\mathfrak{s}$-conflation with $B\in\N$, then we have $A,C\in\N$ (see \cite[Def.~1.17]{Oga22b}).
This generalizes the notion of a Serre subcategory for exact categories introduced in \cite[4.0.35]{C-E98}.

\begin{definition}
\label{def_Sn_from_thick}
\cite[Def.~4.3]{NOS22} 
We associate the following classes of morphisms to a thick subcategory $\N\sse\C$:
\begin{enumerate}
\item
$\L \deff \Set{f\in\Mor\C | f \ \textnormal{is an $\mathfrak{s}$-inflation with}\ \cone(f)\in\N}$; 
\item
$\R \deff \Set{g\in\Mor\C | g \ \textnormal{is an $\mathfrak{s}$-deflation with}\ \cocone(g)\in\N}$; and

\item $\Sn$ is the smallest subclass of $\Mor\C$ closed under compositions containing both $\L$ and $\R$.
\end{enumerate}
\end{definition}

By \cite[p.~374]{NOS22}, the class $\Sn$ consists of all finite compositions of morphisms in $\L$ and $\R$ and, 
moreover, it satisfies the following condition.
\begin{enumerate}[label=\textup{(M\arabic*)}]
  \setcounter{enumi}{-1}

	\item\label{MS0}
	$\Sn$ contains all isomorphisms in $\C$, and is closed under compositions and taking finite direct sums.
\end{enumerate}

If $\N\sse\C$ is thick, then prototypical examples of localizing $\C$ at the class $\Sn$ are Verdier and Serre quotients of triangulated and abelian categories, respectively; see \cite[\S4.2]{NOS22}.

\begin{example}
\label{ex_list}
	\begin{enumerate}
		\item (Verdier quotient.) \cite[Exam.~4.8]{NOS22} 
		Let $(\C,\BE,\fs)$ be a triangulated category and $\N$ a thick subcategory of $\C$.
		The class $\Sn$ satisfies $\Sn=\L=\R$ and 
		the localization $\C[\Sn^{-1}]$ is just the usual Verdier quotient of $\C$ with respect to $\N$.

		\item (Serre quotient.) \cite[Exam.~4.9]{NOS22} 
		Let $\N$ be a Serre subcategory of an abelian category $(\C,\BE,\fs)$. 
		Then we have $\Sn=\L\circ\R$ and $\C[\Sn^{-1}]$ is the usual Serre quotient of $\C$ with respect to $\N$. 
	\end{enumerate}
\end{example}

In contrast to the triangulated and abelian cases, it is not clear if the localization $\C[\Sn^{-1}]$ is equipped with a natural extriangulated structure in general. 
However, sufficient conditions 
for this are identified in \cite[p.~343]{NOS22}, namely, conditions (MR1)--(MR4) concerning $\Sn$.
Since we will not need these conditions explicitly, we omit recalling them here.

Consider the localization $L\colon \C \to \C[\Sn^{-1}]$. 
The class $\Sn$ is said to be \emph{saturated} if, for any $f\in\Mor\C$, we have $L(f)$ is an isomorphism if and only if $f\in\Sn$. 
Let $\overline{(-)}\colon \C\to \overline{\C}$ denote the quotient functor, where $\overline{\C}\deff \C/[\N]$ is the additive ideal quotient. 
We put $\overline{\Sn}\deff\Set{\overline{f} | f\in\Sn}$ accordingly. 
Note that $\overline{f}=0$ if and only if $f$ factors through an object in $\N$.
The localization of $\overline{\C}$ at $\overline{\Sn}$ is denoted by 
\begin{equation}\label{eqn:localization-of-C}
\C/\N\deff 
	\overline{\C}[\overline{\Sn}^{-1}]
\end{equation}
and the localization functor by $\overline{Q}\colon \overline{\C}\to\C/\N$. 
We define $Q\colon \C \to \C/\N$ to be the composition $\overline{Q} \circ \overline{(-)}$. 
Note $Q$ factors uniquely through $L$:
\begin{equation}\label{eqn:functor-M}
\begin{tikzcd}[column sep=1.5cm]
\C 
	\arrow{r}{L}
	\arrow{d}[swap]{\overline{(-)}}
	\arrow{dr}[xshift=0.2cm, yshift=-0.2cm]{Q}
& \C[\Sn^{-1}]
	\arrow[dotted]{d}{^{\exists !}M}
\\
\ol{\C} 
	\arrow{r}{\ol{Q}}
& \C/\N.
\end{tikzcd}
\end{equation}

\begin{lemma}\label{lem:GZ-localization-isomorphisms}
If $\C$ is weakly idempotent complete, then $\C[\Sn^{-1}] \cong \ol{\C}[\ol{\Sn}^{-1} ]$.
\end{lemma}

\begin{proof}
Consider $\mathsf{S} \deff \Set{ f\in\Mor\C | \text{$f$ is a section and admits a cokernel in $\N$} }$. One can check that $\ol{\C}$ is isomorphic to the localization $\C[\mathsf{S}^{-1}]$; see \cite[Exam.~2.6]{Oga22a}. 
Note that since $\C$ is weakly idempotent complete, we have $\mathsf{S} \sse \L \sse \Sn$ (see e.g.\ \cite[Prop.~2.5]{BHST22}). 
It follows that $L = P\circ \ol{(-)}$ and $P = N\ol{Q}$ for uniquely induced functors $P\colon \ol{\C}\cong \C[\mathsf{S}^{-1}] \to \C[\Sn^{-1}]$ and $N \colon \C/\N \to \C[\Sn^{-1}]$. One can then check that $N$ is an inverse of the functor $M$ from \eqref{eqn:functor-M}.
\end{proof}

Thus, if $\C$ is weakly idempotent complete (e.g.\ a triangulated category), we may think of the functor $Q \colon \C \to \C/\N$ as the localization of $\C$ at $\Sn$.

\begin{theorem}
\label{Thm_Mult_Loc}
Let $(\C,\BE,\fs)$ be a skeletally small extriangulated category with a thick subcategory $\N$. 
Suppose $\Sn$ is saturated and $\ol{\Sn}$ satisfies conditions {\rm (MR1)--(MR4)} of \cite{NOS22}. 
Then there is an extriangulated category $(\C/\N,\widetilde{\mathbb{E}},\widetilde{\mathfrak{s}})$ together with an (appropriately universal) exact functor $(Q,\mu)\colon (\C,\BE,\fs)\to (\C/\N,\widetilde{\BE},\widetilde{\fs})$ satisfying $\Ker Q=\N$.
In particular, we obtain a sequence
\begin{equation}\label{seq_extriangulated_localization}
\begin{tikzcd}
    (\N,\BE|_\N,\fs|_\N)
        \arrow{r}{(\inc,\iota)}
    & (\C,\BE,\fs)
        \arrow{r}{(Q,\mu)}
    & (\C/\N,\widetilde{\BE},\widetilde{\fs})
\end{tikzcd}
\end{equation}
of exact functors.
\end{theorem}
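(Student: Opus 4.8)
The plan is to obtain \cref{Thm_Mult_Loc} by assembling the relevant results of \cite{NOS22}, supplemented by \cref{ex_exact_functor}(1) for the exactness of the inclusion functor. The substantive content is due to \cite{NOS22}: under the hypotheses that $\N$ is thick and that $\Sn$ satisfies {\rm (MR1)--(MR4)}, the Gabriel--Zisman localization $\C/\N = \C[\Sn^{-1}]$ of \cite{GZ67} carries an extriangulated structure $(\C/\N,\widetilde{\BE},\widetilde{\fs})$ for which the localization functor $Q$ underlies an exact functor $(Q,\mu)\colon(\C,\BE,\fs)\to(\C/\N,\widetilde{\BE},\widetilde{\fs})$ that is universal among exact functors inverting the morphisms of $\Sn$. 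So the first step is simply to quote these statements of \cite{NOS22}, verifying only that our standing hypotheses provide the required input; in particular the saturation of $\Sn$ ensures $Q$ inverts \emph{exactly} the morphisms of $\Sn$, which will be used below.

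It remains to check $\Ker Q=\N$ and the final assertion. For $\N\subseteq\Ker Q$: if $X\in\N$, the split $\fs$-conflation $0\lra X\overset{\id_X}{\lra}X$ exhibits $0\to X$ as an $\fs$-inflation with $\cone(0\to X)=X\in\N$, so $0\to X\in\L\sse\Sn$; hence $Q(0\to X)$ is an isomorphism, and since $Q$ preserves the zero object we get $Q(X)\cong Q(0)=0$, i.e.\ $X\in\Ker Q$. Conversely, suppose $X\in\Ker Q$, so $Q(X)\cong0$. Then $Q(0\to X)$ is the unique morphism from the zero object $Q(0)$ to $Q(X)\cong0$, hence an isomorphism, so by saturation $0\to X\in\Sn$. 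Writing this morphism as a finite composite $0=Z_0\lra Z_1\lra\cdots\lra Z_m=X$ with each $Z_i\to Z_{i+1}$ in $\L\cup\R$, we have an $\fs$-conflation $Z_i\to Z_{i+1}\to C_i$ with $C_i\in\N$ (if the morphism lies in $\L$) or $K_i\to Z_i\to Z_{i+1}$ with $K_i\in\N$ (if it lies in $\R$); in either case the $2$-out-of-$3$ property of the thick subcategory $\N$ propagates membership in $\N$ from $Z_i$ to $Z_{i+1}$. As $Z_0=0\in\N$, induction gives $X=Z_m\in\N$, so $\Ker Q\subseteq\N$ and hence $\Ker Q=\N$.

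Finally, $(\inc,\iota)\colon(\N,\BE|_\N,\fs|_\N)\to(\C,\BE,\fs)$ is exact by \cref{ex_exact_functor}(1), and $(Q,\mu)$ is exact by the first step, so \eqref{seq_extriangulated_localization} is the desired sequence of exact functors.

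I expect the genuine obstacle to lie upstream, inside \cite{NOS22}: one must show that $\widetilde{\BE}$ is a well-defined biadditive functor on $\C/\N$ and that $\widetilde{\fs}$ satisfies the extriangulated axioms --- with {\rm (ET4)} the delicate one --- and this is precisely where conditions {\rm (MR1)--(MR4)} are used. Taking those results as given, the only point on our side requiring an actual (albeit short) argument is the equality $\Ker Q=\N$ established above; everything else is a matter of correctly matching hypotheses to the cited theorems.
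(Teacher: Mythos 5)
Your proposal is correct and follows essentially the same route as the paper: both proofs simply invoke \cite[Thm.~3.5]{NOS22} for the construction of the exact functor $(Q,\mu)$ and its universality, and use \cref{ex_exact_functor}(1) for the exactness of the inclusion. The only divergence is that you verify $\Ker Q=\N$ directly (via saturation, the decomposition of $\Sn$ into composites of morphisms in $\L$ and $\R$, and the $2$-out-of-$3$ property of $\N$), whereas the paper cites \cite[Lem.~4.5]{NOS22} for this equality; your hands-on argument is valid.
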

\begin{proof}
The exact functor $(Q,\mu)\colon (\C,\BE,\fs)\to (\C/\N,\widetilde{\BE},\widetilde{\fs})$ is constructed in \cite[Thm.~3.5(1)]{NOS22} and 
its universality is shown in \cite[Thm.~3.5(2)]{NOS22} (see also \cite[Prop.~4.3]{ES22}).
The equality $\Ker Q=\N$ follows from \cite[Lem.~4.5]{NOS22}.
\end{proof}

\begin{definition}
\label{def:extri-localization}
In the setup of \cref{Thm_Mult_Loc}, 
we call the exact functor 
$(Q,\mu)\colon (\C,\mathbb{E},\mathfrak{s})
	\to (\C/\N,\widetilde{\mathbb{E}},\widetilde{\mathfrak{s}})$
the 
\emph{extriangulated localization of $\C$ with respect to $\N$}.
If there is no confusion, we simply denote it by $Q\colon \C\to\C/\N$.
\end{definition}

We refer to \cite[\S3]{NOS22} for explicit descriptions of $\widetilde{\mathbb{E}}$ and $\widetilde{\mathfrak{s}}$.
However, we note that, by construction, any $\widetilde{\mathfrak{s}}$-inflation (resp.\ $\widetilde{\mathfrak{s}}$-deflation) comes from an $\mathfrak{s}$-inflation (resp.\ $\mathfrak{s}$-deflation); see \cite[Lem.~3.32]{NOS22}.

\subsection{Localization of triangulated categories}
\label{subsec_abel_loc_of_tri}

We now specialize to the case when $(\C,\BE,\fs)$ corresponds to a triangulated category and recall the relevant localization theory.

\begin{setup}\label{setup:localization-of-tri-cats}
We fix a skeletally small, triangulated category $\C$ with suspension $[1]$ and an 
extension-closed 
subcategory $\N\sse\C$ 
that is closed under
direct summands. 
We denote by $(\C,\BE,\fs)$ the extriangulated category corresponding to the triangulated category $\C$.   
\end{setup}

Since $\N$ is extension-closed in $(\C,\BE,\fs)$, it is immediate that it is extension-closed in any extriangulated substructure $(\C,\BF,\fs|_{\BF})$ of $(\C,\BE,\fs)$.
In particular, it is extension-closed in the following relative extriangulated structures defined using $\N$. These differ to those defined in \cref{def_relative1}, but we make a comparison of these structures in a special case in \cref{lem_comparison_relative_structures}.

\begin{proposition}\label{prop_relative2}
\cite[Prop.~2.1]{Oga22b}
For $A,C\in\C$, define subsets of $\BE(C,A) = \C(C,A[1])$ as follows.
\begin{align*}
\mathbb{E}^L_\N(C,A) &\deff \Set{ h\colon C\to A[1] | \forall x\colon N\to C \text{ with } N\in\N\text{, we have } hx\in[\,\N[1]\,] }\\
\mathbb{E}^R_\N(C,A) &\deff \Set{ h\colon C\to A[1] | \forall y\colon A\to N \text{ with } N\in\N\text{, we have } y\circ h[-1]\in[\,\N[-1]\,] } 
\end{align*}
These give rise to closed subfunctors $\mathbb{E}^L_\N$ and $\mathbb{E}^R_\N$ of $\mathbb{E}$.
In particular, putting $\mathbb{E}_\N\deff\mathbb{E}^L_\N\cap\mathbb{E}^R_\N$, we obtain extriangulated structures
\[
\C^L_\N\deff(\C,\mathbb{E}^L_\N,\mathfrak{s}^L_\N),\quad \C^R_\N\deff(\C,\mathbb{E}^R_\N,\mathfrak{s}^R_\N),\quad \C_\N\deff(\C,\mathbb{E}_\N,\mathfrak{s}_\N),
\]
all relative to the triangulated structure $(\C,\mathbb{E},\mathfrak{s})$.
\end{proposition}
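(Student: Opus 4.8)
The statement to prove is that $\BE^L_\N$ and $\BE^R_\N$ are closed subfunctors of $\BE$; everything else (that the intersection is closed, that the three triplets are extriangulated, relative to the triangulated structure) then follows formally from the relative theory recalled in \S\ref{subsec_rel_theory}, in particular \cref{prop_relative_ET} applied to each subfunctor and the elementary fact that an intersection of closed subfunctors is closed. Since the paper cites \cite[Prop.~2.1]{Oga22} for this, the proof here should either (a) simply invoke that reference, or (b) sketch the argument so the article is self-contained. I will take route (b) but keep it brief.

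The plan is as follows. First, I would verify that $\BE^L_\N(C,A)$ and $\BE^R_\N(C,A)$ are subgroups of $\BE(C,A) = \C(C,A[1])$ for all $A,C$: this is immediate, since the defining conditions ($hx\in[\N[1]]$ for all $x\colon N\to C$, resp.\ $y\circ h[-1]\in[\N[-1]]$ for all $y\colon A\to N$) are closed under addition and negation, because $[\N[1]]$ and $[\N[-1]]$ are ideals. Next, I would check functoriality: given $a\in\C(A,A')$ and $c\in\C(C',C)$, one must show $a_*$ and $c^*$ send $\BE^L_\N(C,A)$ into $\BE^L_\N(C',A')$, and similarly for $\BE^R_\N$. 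For $\BE^R_\N$, say, take $h\in\BE^R_\N(C,A)$; for $c^*h = h\circ c[1] \cdot(\cdots)$ — more precisely $c^*h = h c$ under the identification $\BE(c,A)$, keeping track that $\BE(C,A)=\C(C,A[1])$ — one checks the defining condition using that precomposition by $c$ or $c[-1]$ preserves the relevant factorization; for $a_*h = a[1]\circ h$ one uses that any $y\colon A'\to N$ gives $y\circ a\colon A\to N$ and $h\in\BE^R_\N$ handles it, then pushes the ideal membership through. These are routine diagram chases with the suspension functor; the role of $\N$ being closed under direct summands and extension-closed is not needed for subfunctoriality but will be used for closedness.

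The substantive step is closedness, i.e.\ checking one of the equivalent conditions of \cref{prop_relative_ET} — I would verify that $\fs|_{\BE^R_\N}$-deflations are closed under composition (and dually for $\BE^L_\N$, or equivalently transport along the relevant shift). Concretely: suppose $A\xto{f}B\xto{g}C\overset{h}{\dra}$ and $B\xto{f'}D\xto{g'}E\overset{h'}{\dra}$ are $\fs|_{\BE^R_\N}$-conflations, so $h\in\BE^R_\N(C,A)$ and $h'\in\BE^R_\N(E,B)$; applying \ref{ET4} to the triangulated structure produces a composite $\fs$-conflation $A\xto{g f'}D\xto{}F\overset{h''}{\dra}$ together with the octahedral data, and I must show $h''\in\BE^R_\N(F,A)$, i.e.\ for every $y\colon A\to N$ with $N\in\N$, $y\circ h''[-1]\in[\N[-1]]$. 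The idea is to use the morphism $D\to E$ (one edge of the octahedron) to relate $h''$ to $h'$ and $h$: the octahedral axiom gives relations $d^*\delta'=\delta$, $e^*\rho = f_*\delta'$ in the notation of \ref{ET4}, so $h''$ factors (after shift) through a combination governed by $h$ and $h'$, and one pushes the factorization through $\N[-1]$ using that $h, h'\in\BE^R_\N$ and that $[\N[-1]]$ is an ideal closed under the operations involved; here extension-closedness of $\N$ is what guarantees the intermediate object built from two objects of $\N$ stays in $\N$.

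The main obstacle I anticipate is precisely this last bookkeeping in the octahedron: one has to carefully chase the ideal-membership condition ``$y\circ h''[-1]\in[\N[-1]]$'' through the connecting maps of \ref{ET4}, being careful that the relevant factorizations through objects of $\N$ (after applying $[-1]$ or $[1]$) genuinely compose to a factorization of $y\circ h''[-1]$, and that no naturality sign or shift is misplaced. Once the deflation-composition condition is verified, \cref{prop_relative_ET} upgrades $\BE^R_\N$ (and dually $\BE^L_\N$) to a closed subfunctor, $\BE_\N = \BE^L_\N\cap\BE^R_\N$ is closed as an intersection of closed subfunctors (its $\fs$-deflations are those that are simultaneously $\fs|_{\BE^L_\N}$- and $\fs|_{\BE^R_\N}$-deflations, hence closed under composition), and the three extriangulated structures $\C^L_\N, \C^R_\N, \C_\N$, all relative to $(\C,\BE,\fs)$, are produced directly by the definition of a relative theory in \S\ref{subsec_rel_theory}. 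Alternatively, and more economically for the paper, one simply cites \cite[Prop.~2.1]{Oga22}.
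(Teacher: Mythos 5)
The paper offers no argument for this proposition: it is recalled verbatim with the citation \cite[Prop.~2.1]{Oga22}, so your ``route (a)'' is literally what the authors do, and there is nothing to compare there. The routine parts of your sketch are also fine: the subgroup and bifunctoriality checks for $\BE^L_\N$ and $\BE^R_\N$, the fact that an intersection of closed subfunctors is closed, the reduction of closedness to composition of $\fs|_{\BE^R_\N}$-inflations or -deflations via \cref{prop_relative_ET}, and the observation that the resulting structures are by construction relative to $(\C,\BE,\fs)$.

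However, your ``route (b)'' has a genuine gap at its only substantive step, the composition-closure. First a slip: with your conflations $A\xto{f}B\xto{g}C$ and $B\xto{f'}D\xto{g'}E$, the only composable structure maps are the inflations, giving $f'f\colon A\to D$; your ``$A\xto{gf'}D$'' does not typecheck, so you are in fact verifying condition (2) of \cref{prop_relative_ET}, not (3) --- harmless, but symptomatic of how loosely the step is treated. More seriously, the chase you describe does not close up. In the notation of \ref{ET4}, with $\delta\in\BE^R_\N(D,A)$, $\rho\in\BE^R_\N(F,B)$ and $\delta'\in\BE(E,A)$ the extension of the composite, the octahedral data give $\delta'\circ d=\delta$ and $\rho\circ e=f_*\delta'$, and one must show $y\circ\delta'[-1]\in[\N[-1]]$ for every $y\colon A\to N$ with $N\in\N$. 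The hypotheses only yield (i) $y\circ\delta'[-1]\circ d[-1]=y\circ\delta[-1]\in[\N[-1]]$ and (ii) $z\circ f\circ\delta'[-1]=z\circ\rho[-1]\circ e[-1]\in[\N[-1]]$ for morphisms $z$ out of $B$. Neither controls $y\circ\delta'[-1]$ itself: knowing a map lies in the ideal after precomposing with $d[-1]$ says nothing about the map, and $y$ need not factor through $f$ --- the obstruction to such a factorization is exactly $y\circ\delta[-1]$, which is only in $[\N[-1]]$, not zero. Bridging this mismatch between ``morphisms out of $A$'' (what the goal quantifies over) and ``morphisms out of $B$'' (what the second hypothesis controls) is the actual content of \cite[Prop.~2.1]{Oga22}; it needs a genuine construction, for instance replacing $N$ by an extension $V$ of an object of $\N$ by $N$ (this, rather than generic ``intermediate objects'', is where extension-closedness of $\N$ really enters) so that a modification of $y$ does extend along $f$, and even then one must still extract the required factorization of $y\circ\delta'[-1]$ from that of the modified map. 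None of this is supplied by ``pushing the factorization through the octahedron'', so as written the self-contained argument is incomplete; citing \cite[Prop.~2.1]{Oga22}, as the paper does, is the safe course.
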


We remark that the above structures are generalized in \cite[Prop.~A.4]{Che23} but from the viewpoint of constructing exact substructures of an exact category.
With respect to the relative structure $\C_\N$, the pair $(\C,\N)$ yields a saturated class $\Sn$ of morphisms in $\C$ with $\ol{\Sn}$ satisfying the needed conditions {\rm (MR1)}--{\rm (MR4)} to obtain an extriangulated localization.

\begin{theorem}
\label{thm_abel_loc1}
\cite[Thm.~A, Lem.~2.5, Lem.~2.12]{Oga22b}
The following statements hold.
\begin{enumerate}[label={\textup{(\arabic*)}}]

\item 
\label{lem_coresolving_subcat}
The subcategory $\N\sse\C$ is closed under taking cones of $\mathfrak{s}_\N^R$-inflations. 
Dually, $\N$ is closed under taking cocones of $\mathfrak{s}_\N^L$-deflations. 

\item
\label{N-thick}
The subcategory $\N$ is thick in  $\C_\N$. 
The corresponding class $\Sn\sse\Mor\C$ (see \textup{\cref{def_Sn_from_thick}}) is saturated and $\ol{\Sn}$ satisfies {\rm (MR1)--(MR4)} with respect to $\C_\N = (\C,\BE_{\N},\fs_{\N})$. 
Moreover, $\Sn=\R_{\ret}\circ\L=\R\circ\L_{\sec}$ holds, where 
$\L_{\sec}$ denotes the class of sections belonging to $\L$ 
and 
$\R_{\ret}$ denotes the class of retractions belonging to $\R$.

\item
\label{extri-local}
There exists an extriangulated localization $(Q,\mu)\colon \C_\N\to (\C/\N,\widetilde{\BE_\N},\widetilde{\fs_\N})$ with $\Ker Q=\N$.

\end{enumerate}
\end{theorem}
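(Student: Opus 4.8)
The plan is to isolate part~\ref{lem_coresolving_subcat} as the only non-formal ingredient, and then to obtain parts~\ref{N-thick} and~\ref{extri-local} from it by bookkeeping together with the general localization machinery already recorded in \cref{Thm_Mult_Loc}. I would also exploit self-duality throughout: replacing $(\C,\N)$ by $(\C^{\op},\N^{\op})$ keeps $\N$ extension-closed and closed under direct summands while interchanging the subfunctors $\BE^L_\N$ and $\BE^R_\N$ of \cref{prop_relative2}, so each $\fs^L_\N$-assertion is the $\C^{\op}$-shadow of an $\fs^R_\N$-assertion; it therefore suffices to treat the $\fs^R_\N$-halves.

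For part~\ref{lem_coresolving_subcat}, take a triangle $A\overset{f}{\lra}B\overset{g}{\lra}C\overset{h}{\lra}A[1]$ with $h\in\BE^R_\N(C,A)$ and $A,B\in\N$; the target is $C\in\N$. Since $A\in\N$, I would feed $y=\id_A$ into the defining condition of $\BE^R_\N$: this yields $h[-1]\in[\,\N[-1]\,]$, equivalently a factorization $h\colon C\overset{\alpha}{\lra}N\overset{\beta}{\lra}A[1]$ with $N\in\N$. Completing $\beta$ to a triangle $A\lra E\lra N\overset{\beta}{\lra}A[1]$ and invoking extension-closure of $\N$ (\cref{prop_ext_closed_ET}) gives $E\in\N$, and since $hg=0$ the map $\alpha g\colon B\to N$ factors through $E\lra N$. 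The heart of the proof is then a diagram chase: apply \ref{ET4} to the composable inflations $\alpha\colon C\to N$ and $\beta\colon N\to A[1]$, whose composite is $h$, and splice the resulting octahedral diagram together with the weak pushouts and pullbacks of \cref{lem:NP19-cor-3-16}; keeping careful track of suspensions, each cone and cocone that appears can be placed inside a triangle whose two outer terms lie in $\N$, hence lies in $\N$ by extension-closure (after discarding, via closure under direct summands, the redundant summands that the weak (co)limits produce), and the chase ends by forcing $C\in\N$. The $\fs^L_\N$-statement for deflations is then the $\C^{\op}$-version.

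Granting part~\ref{lem_coresolving_subcat}, part~\ref{N-thick} becomes formal. Thickness of $\N$ in $\C_\N$ amounts to closure under summands (a hypothesis) plus the $2$-out-of-$3$ property for $\fs_\N$-conflations $A\lra B\lra C$: the case $A,C\in\N\Rightarrow B\in\N$ is precisely extension-closure, while $A,B\in\N\Rightarrow C\in\N$ and $B,C\in\N\Rightarrow A\in\N$ reduce to part~\ref{lem_coresolving_subcat} and its dual, since $\BE_\N\sse\BE^R_\N$ and $\BE_\N\sse\BE^L_\N$ make any $\fs_\N$-inflation (resp.\ $\fs_\N$-deflation) an $\fs^R_\N$-inflation (resp.\ $\fs^L_\N$-deflation). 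For the normal form $\Sn=\R_{\ret}\circ\L=\R\circ\L_{\sec}$, the axioms (MR1)--(MR4), and saturatedness, I would unwind \cref{def_Sn_from_thick} --- a member of $\L$ is an $\fs_\N$-inflation with cone in $\N$, a member of $\R$ an $\fs_\N$-deflation with cocone in $\N$ --- and then use \cref{lem:NP19-cor-3-16} to commute an $\L$-morphism past an $\R$-morphism and the $2$-out-of-$3$ property to split off the pieces landing in $\N$; this collapses any finite alternating composite of $\L$'s and $\R$'s to the stated shape, on which (MR1)--(MR4) and saturatedness are routine to verify. Finally part~\ref{extri-local} is \cref{Thm_Mult_Loc} applied to the thick subcategory $\N\sse\C_\N$ with its class $\Sn$, now known to be saturated and to satisfy (MR1)--(MR4).

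The step I expect to be the genuine obstacle is part~\ref{lem_coresolving_subcat}: the remaining arguments are formal, whereas deducing $C\in\N$ requires orchestrating the octahedral diagram so that every intermediate object is recognizably an extension (or a direct summand of one) of objects of $\N$, and making the suspensions line up is the delicate point --- this is exactly where the precise definition of $\BE^R_\N$, in particular the ideal $[\,\N[-1]\,]$ rather than $[\,\N\,]$, does its work. Full details are in \cite{Oga22}.
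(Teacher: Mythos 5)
Your route is essentially the paper's: for \ref{lem_coresolving_subcat} the paper makes exactly your first move (an $\fs^R_\N$-triangle $A\to B\to C\overset{h}{\dashrightarrow}$ with $A\in\N$ gives $h\in[\,\N\,]$ by taking $y=\id_A$ in \cref{prop_relative2}) and then simply cites \cite[Lem.~2.4(2)]{Oga22} for the implication ``$A,B\in\N$ and $h\in[\,\N\,]$ imply $C\in\N$''; parts \ref{N-thick} and \ref{extri-local} are likewise quoted from \cite{Oga22}, with \ref{extri-local} being, as you say, \cref{Thm_Mult_Loc} once thickness, (MR1)--(MR4) and saturation are in hand. Two remarks on the places where you go beyond the citation. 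First, your ``heart of the proof'' for \ref{lem_coresolving_subcat} is vaguer than it should be and, taken literally, does not close: the octahedron on $h=\beta\alpha$ produces a triangle $E\to\cone(\alpha)\to B[1]$, whose outer terms are not both in $\N$, so no amount of suspension bookkeeping forces $C\in\N$ from it alone. The clean finish uses the weak pullback of \cref{lem:NP19-cor-3-16}(2) directly: apply it to the $\fs$-triangle $A\to E\to N\dashrightarrow$ realizing $\beta$ and to the morphism $\alpha\colon C\to N$, whose pulled-back extension $\alpha^{*}\beta=h$ is realized by $A\to B\to C$; this yields an $\fs$-triangle $B\to C\oplus E\to N\dashrightarrow$, whence $C\oplus E\in\N$ by extension-closure and $C\in\N$ by summand-closure --- no (ET4) chase and no shifts needed. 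Second, your reduction of thickness in \ref{N-thick} to \ref{lem_coresolving_subcat} and its dual via $\BE_\N\subseteq\BE^R_\N\cap\BE^L_\N$ is correct, but the normal form $\Sn=\R_{\ret}\circ\L=\R\circ\L_{\sec}$, the conditions (MR1)--(MR4) and the saturation of $\Sn$ are not ``routine'': they constitute a substantial portion of \cite[\S2]{Oga22}, and (MR1)--(MR4) are deliberately not recalled in this paper, so they cannot be verified from within it; like the paper, you are in effect importing these from \cite{Oga22}, which is legitimate but should be stated as such (and note that saturation, though only mentioned in the text preceding the theorem, must be part of what you import, since it is a hypothesis of \cref{Thm_Mult_Loc}).
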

\begin{proof}
\ref{lem_coresolving_subcat}:\;\;
    Suppose 
    $\begin{tikzcd}[column sep=0.5cm]
        A \arrow{r}{f}& B\arrow{r}{g}& C \arrow[dashed]{r}{h}&{}
    \end{tikzcd}$
    is an $\mathfrak{s}_\N^R$-triangle with $A,B\in\N$. 
    It follows that $h\in[\N]$ by \cref{prop_relative2}. Then \cite[Lem.~2.5(2)]{Oga22b} implies $C\in\N$. The second assertion is proved dually.

\ref{N-thick} and \ref{extri-local}:\;\; This is a combination of \cite[Cor.~2.8, Prop.~2.11, Prop.~2.17, Lem.~2.12, Thm.~2.20]{Oga22b}. 
\end{proof}


\subsection{Abelian localization of triangulated categories}
\label{sec_abelian_local_of_tri_cats}

Here, we review localizations of triangulated categories that are abelian. 
Abelian localizations of triangulated categories can arguably be traced back to hearts of t-structures in the sense of \cite{BBD}. Since then, abelian localizations have been found using cluster tilting subcategories \cite{BMR07,KR07, KZ08}. These constructions were unified in \cite{Nak11, AN12} and placed in an extriangulated context in \cite{LN19}. 
A generalization from cluster tilting to rigid subcategories was initiated in \cite{BM12,BM13}, and has been further developed in \cite{Bel13,Nak13,HS20}. See \cref{ex_BM_BBD} for some details.

\begin{setup}\label{setup:abelian-local}
In addition to \cref{setup:localization-of-tri-cats}, we assume further that $\cone(\N,\N)=\C$ holds in \S\ref{sec_abelian_local_of_tri_cats}.
\end{setup}

Note that when $(\C,\BE,\fs)$ corresponds to a triangulated category (as we are currently assuming), we have $\cone(\N,\N)=\C$ if and only if $\cocone(\N,\N)=\C$. 
In this case, the bifunctors $\mathbb{E}_\N^L$ and $\mathbb{E}_\N^R$ can be described more simply. 
The next observation follows from the proof of \cite[Lem.~4.1]{Oga22b}.

\begin{lemma}
\label{lem_specific_relative_structures}
We have the following identities.
\begin{align*}
\mathbb{E}_\N^L(C,A)&= \Set{h\in\mathbb{E}(C,A) | h[-1]\textnormal{\ factors through an object in\ }\N}\\
\mathbb{E}_\N^R(C,A)&= \Set{h\in\mathbb{E}(C,A) | h\textnormal{\ factors through an object in\ }\N}
\end{align*}
\end{lemma}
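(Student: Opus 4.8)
The plan is to prove the two identities by establishing containments in each direction, where the non-trivial inclusion in each case is to upgrade the "cokernel-type" condition appearing in \cref{prop_relative2} to the stronger "factors through $\N$" condition. I will treat the $\mathbb{E}_\N^R$ identity in detail; the $\mathbb{E}_\N^L$ case is formally dual using $\cocone(\N,\N)=\C$. First I would record the easy inclusion: if $h\colon C\to A[1]$ factors as $C\to N'\to A[1]$ with $N'\in\N$, then for any $y\colon A\to N$ with $N\in\N$ the composite $y\circ h[-1]\colon C[-1]\to N$ factors through $N'[-1]\in\N[-1]$, so $h\in\mathbb{E}_\N^R(C,A)$. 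Hence $\{h \mid h\text{ factors through }\N\}\subseteq\mathbb{E}_\N^R(C,A)$.

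For the reverse inclusion, suppose $h\in\mathbb{E}_\N^R(C,A)=\C(C,A[1])$, so that $y\circ h[-1]\in[\N[-1]]$ for every $y\colon A\to N$ with $N\in\N$. By \cref{setup:abelian-local} we have $\cocone(\N,\N)=\C$, so $A$ fits into a triangle $A\overset{a}{\to}N_0\to N_1\to A[1]$ with $N_0,N_1\in\N$; in particular $a\colon A\to N_0$ is an $\fs$-inflation with cone in $\N$. Applying the defining condition of $\mathbb{E}_\N^R$ to $y=a$, the composite $a\circ h[-1]\colon C[-1]\to N_0$ factors through an object of $\N[-1]$, say $a\circ h[-1]=u[-1]\circ v[-1]$ with $v[-1]\colon C[-1]\to M[-1]$, $u[-1]\colon M[-1]\to N_0$ and $M\in\N$ — equivalently, after desuspending, $a\circ(h[-1])[1]$... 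I would instead phrase this directly at the level of $h$: rotating, the condition "$a\circ h[-1]$ factors through $\N[-1]$" says $h\colon C\to A[1]$ becomes zero after composing with $a[1]\colon A[1]\to N_0[1]$ \emph{modulo} maps factoring through $\N$; since $a[1]$ is (part of) a triangle whose third term lies in $\N[1]$, a diagram chase with the long exact sequence obtained by applying $\C(C,-)$ to the triangle $N_1\to A[1]\overset{a[1]}{\to}N_0[1]\to N_1[1]$ produces a factorization of $h$ through $N_1\in\N$. Spelling this out is exactly the content of \cite[Lem.~4.1]{Oga22}, and the statement says the identities "follow from the proof" there, so I would simply extract and cite that argument rather than reproduce it.

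The key steps, in order, are: (1) note the easy inclusion for both bifunctors; (2) invoke $\cocone(\N,\N)=\C$ (resp.\ $\cone(\N,\N)=\C$) to embed $A$ (resp.\ $C$) into a triangle with two terms in $\N$; (3) feed the distinguished inflation $A\to N_0$ (resp.\ deflation $N_0\to C$) into the defining condition of $\mathbb{E}_\N^R$ (resp.\ $\mathbb{E}_\N^L$); (4) run the long-exact-sequence chase for the cohomological functor $\C(C,-)$ (resp.\ $\C(-,A)$) to pull the factorization back to the first term of the triangle, which lies in $\N$; (5) observe that this is precisely the argument in \cite[Lem.~4.1]{Oga22} and cite it.

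The main obstacle I anticipate is step (4): the defining condition of $\mathbb{E}_\N^R$ only gives that $y\circ h[-1]$ factors through $\N[-1]$ for \emph{each} $y$ separately, and one must be careful that the triangle $A\to N_0\to N_1\to A[1]$ is chosen so that this single test map $y=a$ already forces $h$ itself to factor through $N_1$; the point is that the connecting map $C\to A[1]$ in the relevant long exact sequence is exactly multiplication by $h$, so "$a[1]\circ h$ kills the relevant part" translates into "$h$ lifts along $N_1\to A[1]$". Getting the modulo-$[\N]$ bookkeeping right — i.e.\ that "factors through $\N[-1]$ after suspension" is genuinely the same as "factors through $\N$" after the shift — is the only delicate bit, and it is handled cleanly in \cite{Oga22}.
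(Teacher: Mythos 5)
The paper gives no argument for this lemma beyond the sentence that it follows from the proof of \cite[Lem.~4.1]{Oga22}, so your easy inclusions and your decision to ultimately cite that proof coincide with what the paper does. The problem is the mechanism you sketch for the hard inclusion of the $\mathbb{E}^R_\N$-identity: as stated, step (4) would fail. From the test map $a\colon A\to N_0$ you only learn that $a\circ h[-1]$ (equivalently $a[1]\circ h$) factors through some $M\in\N$, not that it vanishes, and the long exact sequence of $\C(C,-)$ applied to $N_1\to A[1]\xrightarrow{a[1]}N_0[1]\to N_1[1]$ detects only honest vanishing: if $h$ lifted along $N_1\to A[1]$, then $a[1]\circ h=0$ on the nose, which need not hold even for $h$ that visibly lie in $[\N]$. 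Nor is there a ``modulo $[\N]$'' version of the chase, since $\C(C,-)$ followed by the quotient by $[\N]$ is not cohomological. So this is a genuine gap in the sketched argument, not bookkeeping.

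The argument that works (and is, up to presentation, what the proof of \cite[Lem.~4.1]{Oga22} does) needs one further idea. Write $a\circ h[-1]=v\circ u$ with $u\colon C[-1]\to M[-1]$ and $v\colon M[-1]\to N_0$, observe that $\begin{psmallmatrix}a & -v\end{psmallmatrix}\circ\begin{psmallmatrix}h[-1]\\u\end{psmallmatrix}=0$, and conclude that $\begin{psmallmatrix}h[-1]\\u\end{psmallmatrix}$ factors through the cocone $W$ of $\begin{psmallmatrix}a & -v\end{psmallmatrix}\colon A\oplus M[-1]\to N_0$. The octahedral axiom applied to $\begin{psmallmatrix}a & -v\end{psmallmatrix}\circ\iota_A=a$ yields a triangle $N_1[-1]\to W\to M[-1]\to N_1$, so $W\in\N[-1]$ because $\N$ (hence $\N[-1]$) is extension-closed; then $h[-1]$ factors through $W$, i.e.\ $h$ factors through $W[1]\in\N$. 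The point is that $h$ factors through an \emph{extension} built from shifts of $N_1$ and $M$, not through $N_1$ itself, and extension-closedness of $\N$ --- which your sketch never invokes --- is exactly the ingredient that makes the statement true. (A minor slip besides: the $\mathbb{E}^L_\N$-identity uses $\cone(\N,\N)=\C$, resolving $C$, as your step (2) correctly says, not $\cocone(\N,\N)=\C$ as in your opening sentence; in the triangulated setting the two conditions are of course equivalent.)
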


Relative structures like the above can be obtained from rigid subcategories.
The next result clarifies how these structures relate to those in \cref{def_relative1} 
in case $\N$ is the kernel of $\C(\X,-)$ for a contravariantly finite, rigid subcategory $\X\sse\C$
(see \cite[Exam.~2.4]{Oga22b}). 
We will use this in \S\ref{subsec_additive_formula} to produce a generalization of 
the index defined in \cite{JS23}. 
Recall that $\X\sse\C$ is \emph{rigid} if $\BE(\X,\X) =\C(\X,\X[1]) = 0$. 

\begin{lemma}
\label{lem_comparison_relative_structures}
Let $\X\subseteq \C$ be a subcategory and consider the extension-closed subcategory 
$\M\deff\X^{\perp_0}\deff\Set{C\in\C | \C(\X,C)=0}$
of $\C$. 
If $\X$ is contravariantly finite and rigid, then we have 
$\mathbb{E}^R_\M(C,A)=\mathbb{E}_R^\X(C,A)$ and 
$\mathbb{E}^L_\M(C,A)=\mathbb{E}_L^\X(C,A)$ for any $A,C\in\C$. 
In particular, 
$\C_\M^R = \C_R^\X$, 
$\C_\M^L = \C_L^\X$ and 
$\C_\M = \C^\X$.
\end{lemma}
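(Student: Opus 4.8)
The plan is to verify the two claimed identities $\mathbb{E}^R_\M(C,A)=\mathbb{E}_R^\X(C,A)$ and $\mathbb{E}^L_\M(C,A)=\mathbb{E}_L^\X(C,A)$ directly at the level of subsets of $\mathbb{E}(C,A)=\C(C,A[1])$, since the identifications $\C_\M^R = \C_R^\X$, $\C_\M^L = \C_L^\X$, and consequently $\C_\M=\C^\X$, are then immediate from the definitions of the associated realizations $\fs$ restricted to these subfunctors. Throughout, the standing hypotheses are that $\X$ is contravariantly finite and rigid, and $\M=\X^{\perp_0}$. Because $\X$ is contravariantly finite in $\C$, every object of $\C$ admits a right $\X$-approximation, and the defining property $\C(\X,\X[1])=0$ of rigidity will be what lets us replace ``factors through $\X^{\perp_0}$'' by ``is killed by maps out of $\X$''.

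First I would treat the identity $\mathbb{E}^R_\M(C,A)=\mathbb{E}_R^\X(C,A)$. Unwinding \cref{lem_specific_relative_structures}, an element $h\in\mathbb{E}(C,A)$ lies in $\mathbb{E}^R_\M(C,A)$ iff $h$ factors through some $M\in\M=\X^{\perp_0}$; and $h\in\mathbb{E}_R^\X(C,A)$ iff $h\circ x=0$ for every $x\colon X\to C$ with $X\in\X$, by \cref{def_relative1}. For the inclusion $\mathbb{E}^R_\M\subseteq\mathbb{E}_R^\X$: if $h=h'\circ p$ with $p\colon C\to M$, $M\in\M$, and $x\colon X\to C$ with $X\in\X$, then $p\circ x\colon X\to M$ is zero because $\C(\X,M)=0$, hence $h\circ x=0$. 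For the reverse inclusion, I would use a right $\X$-approximation $x_C\colon X_C\to C$ and complete it to a triangle $X_C\xrightarrow{x_C} C\xrightarrow{p} M_C\xrightarrow{} X_C[1]$. One checks $M_C\in\M$: given any $y\colon X\to M_C$ with $X\in\X$, lift $y$ along $p$ up to the connecting map into $X_C[1]$; since $\C(X,X_C[1])=\C(\X,\X[1])=0$ by rigidity, $y$ factors through $p$, say $y=p\circ z$, but then $y = p\circ z$ with $z$ factoring through $x_C$ (as $x_C$ is a right $\X$-approximation, not quite—$z\colon X\to C$ need not factor through $X_C$; instead one argues $y$ itself is $0$ because...). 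The cleaner route: $\C(\X,M_C)=0$ follows since applying $\C(X,-)$ to the triangle gives $\C(X,C)\xrightarrow{p_*}\C(X,M_C)\to\C(X,X_C[1])=0$, and $p_*$ is zero precisely because every map $X\to C$ factors through the approximation $x_C$ and $p\circ x_C=0$. Thus $M_C\in\M$. Now if $h\in\mathbb{E}_R^\X(C,A)$, then $h\circ x_C=0$, so by the weak cokernel property of $p$ (i.e. rotating the triangle, $p$ is a weak cokernel of $x_C$) there is $h'\colon M_C\to A[1]$ with $h=h'\circ p$, exhibiting $h\in\mathbb{E}^R_\M(C,A)$.

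The identity $\mathbb{E}^L_\M(C,A)=\mathbb{E}_L^\X(C,A)$ is then obtained by the analogous argument applied after shifting by $[-1]$: $h\in\mathbb{E}^L_\M(C,A)$ iff $h[-1]\colon C[-1]\to A$ factors through an object of $\M$, while $h\in\mathbb{E}_L^\X(C,A)$ iff $h[-1]\circ x=0$ for all $x\colon X\to C[-1]$ with $X\in\X$; one reruns the previous paragraph with $C[-1]$ in place of $C$ and $A$ in place of $A[1]$, using a right $\X$-approximation of $C[-1]$. The main obstacle, and the only place any real care is needed, is the claim that the cone $M_C$ of a right $\X$-approximation $x_C\colon X_C\to C$ lands in $\X^{\perp_0}$; this is exactly where rigidity of $\X$ enters (to kill $\C(\X,X_C[1])$) together with the approximation property (to force $p_*\colon\C(\X,C)\to\C(\X,M_C)$ to vanish). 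Everything else is a routine application of the (weak) (co)kernel properties of triangles and the descriptions in \cref{lem_specific_relative_structures}. Finally, since the $\mathbb{E}$-extensions agree, the realizations $\fs^R_\M=\fs_R^\X$, $\fs^L_\M=\fs_L^\X$ agree as restrictions of the same $\fs$, giving $\C_\M^R = \C_R^\X$ and $\C_\M^L = \C_L^\X$, and intersecting yields $\C_\M = \C^\X$.
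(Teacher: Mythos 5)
Your proposal is correct and takes essentially the same route as the paper: both inclusions are handled via the factorization criterion of \cref{lem_specific_relative_structures}, with the converse obtained from the triangle on a right $\X$-approximation together with the weak cokernel property, and the left-hand identity treated by the shifted analogue. The only presentational difference is that the paper first records $\cocone(\M,\M)=\C$ (exactly the triangle $X_C\to C\to M_C\to X_C[1]$ with $M_C, X_C[1]\in\M$ that you construct, using rigidity and the approximation property) so that \cref{lem_specific_relative_structures} is applicable to $\M$ --- a point worth stating explicitly, since that lemma is proved under the hypothesis $\cone(\N,\N)=\C$ --- whereas you instead spell out the verification $M_C\in\M$ that the paper merely asserts.
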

\begin{proof}
We only check the first equation. 
Note that $\cocone(\M,\M)=\C$ holds. Indeed, any object $C\in\C$ admits a right $\X$-approximation $x\colon X\to C$ which yields a triangle 
\begin{equation}\label{eqn:lem1-20-X-approx}
    \begin{tikzcd}
        X \arrow{r}{x}& C \arrow{r}{y}& M \arrow{r}{}& X[1]
    \end{tikzcd}
\end{equation}
with $M, X[1]\in\M$.

It is clear that $\mathbb{E}_\M^R(C,A)\sse\mathbb{E}_R^\X(C,A)$. 
Conversely, let $h\in\BE_R^\X(C,A)$ and consider the triangle \eqref{eqn:lem1-20-X-approx} as above. Then the composite $hx$ vanishes, whence $h$ factors through $y$, and so $h\in[ \M ]$. The claim follows from the second identity of \cref{lem_specific_relative_structures}.
\end{proof}

The main result we recall in \S\ref{sec_abelian_local_of_tri_cats} is the following, which establishes the \emph{abelian localization} of a triangulated category with respect to an extension-closed subcategory.

\begin{theorem}
\label{N-serre-local-is-abelian}
\cite[Thm.~4.2, Cor.~4.3]{Oga22b}
The subcategory $\N$ is Serre 
in
$\C_\N$ and the localization $(\C/\N,\widetilde{\BE_\N},\widetilde{\fs_\N})$ corresponds to an abelian category.
Furthermore, the functor 
$Q\colon \C \to \C/\N$ 
is cohomological.
\end{theorem}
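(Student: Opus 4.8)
The plan is to prove the three assertions in turn --- that $\N$ is Serre in $\C_\N$, that $(\C/\N,\widetilde{\BE_\N},\widetilde{\fs_\N})$ corresponds to an abelian category, and that $Q$ is cohomological --- using \cref{thm_abel_loc1} (so that $\N$ is thick in $\C_\N$ and the extriangulated localization $Q\colon\C_\N\to\C/\N$ exists with $\Ker Q=\N$) and the simplified descriptions of $\BE^L_\N,\BE^R_\N$ from \cref{lem_specific_relative_structures}, which are available since $\cone(\N,\N)=\C$.

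For the Serre property, since $\N$ is already thick it suffices to take an $\fs_\N$-conflation $A\overset{f}{\lra}B\overset{g}{\lra}C$ with $B\in\N$ and show $A,C\in\N$. This underlies a triangle $A\overset{f}{\lra}B\overset{g}{\lra}C\overset{h}{\lra}A[1]$ in $\C$ with $h\in\BE_\N(C,A)=\BE^L_\N(C,A)\cap\BE^R_\N(C,A)$, so by \cref{lem_specific_relative_structures} we may write $h$ as a composite $C\xrightarrow{\alpha}N\xrightarrow{\beta}A[1]$ with $N\in\N$, equivalently $h=\alpha^{*}\beta$ with $\beta\in\BE(N,A)$; and dually $h[-1]$ factors as $C[-1]\xrightarrow{\eta}N'\xrightarrow{\gamma}A$ with $N'\in\N$, equivalently $h=\gamma_{*}(\eta[1])$. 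The first factorization exhibits the $\fs$-triangle $A\lra B\lra C\overset{h}{\dra}$ as a weak pullback, along $\alpha$, of an $\fs$-triangle $A\lra E\lra N\overset{\beta}{\dra}$; feeding this into the weak pullback construction of \cref{lem:NP19-cor-3-16} yields a triangle $B\lra C\oplus E\lra N$ in $\C$, and since $B,N\in\N$ and $\N$ is extension-closed and closed under direct summands, $C\in\N$. Dually, the second factorization exhibits $A\lra B\lra C\overset{h}{\dra}$ as a weak pushout, along $\gamma$, of an $\fs$-triangle $N'\lra E'\lra C\overset{\eta[1]}{\dra}$, and the weak pushout construction of \cref{lem:NP19-cor-3-16} gives a triangle $N'\lra E'\oplus A\lra B$ in $\C$, whence $A\in\N$ in the same way. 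Thus $\N$ is Serre in $\C_\N$.

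With $\N$ Serre, $Q\colon\C_\N\to\C/\N$ is the Serre quotient of $\C_\N$ by $\N$, and $(\C/\N,\widetilde{\BE_\N},\widetilde{\fs_\N})$ is extriangulated by \cref{thm_abel_loc1}. To see that it corresponds to an abelian category, I would invoke the known characterization of abelian-corresponding extriangulated categories and reduce to verifying that in $\C/\N$ the $\widetilde{\fs_\N}$-inflations are exactly the monomorphisms and the $\widetilde{\fs_\N}$-deflations exactly the epimorphisms. For ``inflation $\Rightarrow$ mono'' (and dually): by \cref{lem_inf} such a map is, up to isomorphisms in $\C/\N$, of the form $Q(f)$ for an $\fs_\N$-inflation $f$ of $\C$, and one checks $Q(f)$ is monic from the weak-kernel property of the conflation completing $f$ together with $\Ker Q=\N$ --- the Serre condition being precisely what promotes ``weak kernel'' to ``kernel'' after localizing. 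For the converse, that every monomorphism (resp.\ epimorphism) of $\C/\N$ is a $\widetilde{\fs_\N}$-inflation (resp.\ deflation), one lifts the map through the roof calculus to a morphism of $\C$ and completes it, using $\cocone(\N,\N)=\C$ (resp.\ $\cone(\N,\N)=\C$), to an $\fs_\N$-conflation whose $Q$-image realizes the given map; this is the place where the standing hypothesis $\cone(\N,\N)=\C$ is genuinely needed.

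Finally, for ``$Q$ cohomological'', the cleanest route is to factor $Q$ as the canonical cohomological functor $\C\to\mod\C$, $X\mapsto\C(-,X)$, followed by an exact quotient functor realizing $\C/\N$ as an appropriate Serre quotient of the abelian category $\mod\C$; alternatively, given a triangle $A\lra B\lra C\lra A[1]$ in $\C$ one uses $\cone(\N,\N)=\C$ to replace it, modulo morphisms killed by $Q$, with an $\fs_\N$-conflation and then invokes exactness of $Q\colon\C_\N\to\C/\N$ together with the abelian structure on $\C/\N$. I expect the main obstacle to be establishing the abelian structure --- concretely, the converse directions above, i.e.\ that $\C/\N$ has enough $\widetilde{\fs_\N}$-conflations to witness all of its monomorphisms and epimorphisms --- since that is exactly where $\cone(\N,\N)=\C$ does the essential work, whereas the Serre property and the cohomological statement are comparatively formal once it is in place.
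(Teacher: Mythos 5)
First, note that the paper does not prove \cref{N-serre-local-is-abelian} at all: it is imported verbatim from \cite[Thm.~4.2, Cor.~4.3]{Oga22}, so your attempt has to be judged on its own merits. Your argument for the Serre property is correct and complete: using \cref{thm_abel_loc1} for thickness, \cref{lem_specific_relative_structures} to factor $\delta\in\BE_\N(C,A)$ and $\delta[-1]$ through $\N$, and the extra $\fs$-conflations $B\to C\oplus E\to N$ and $N'\to E'\oplus A\to B$ supplied by \cref{lem:NP19-cor-3-16}, extension-closure plus closure under direct summands does give $A,C\in\N$; this is essentially the argument one would expect.

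The abelianness part, however, has a genuine gap. Your reduction rests on the claim that an extriangulated category is abelian as soon as its inflations are exactly the monomorphisms and its deflations exactly the epimorphisms. That is not a characterization of abelian categories: those two conditions only yield an exact category (this is the content of \cite[Cor.~3.18]{NP19}) and say nothing about kernels and cokernels of \emph{arbitrary} morphisms. A concrete counterexample is any non-semisimple triangulated category equipped with its split extriangulated structure $\BE=0$: there every monomorphism is split, hence an inflation, and every epimorphism is split, hence a deflation, yet the category is not abelian. So the essential content of the theorem --- that every morphism of $\C/\N$ admits a kernel and a cokernel, equivalently an epi--mono factorization realized by a $\widetilde{\fs_\N}$-deflation followed by a $\widetilde{\fs_\N}$-inflation --- is never established in your sketch; this is precisely where $\cone(\N,\N)=\C$ does the real work in \cite{Oga22} (compare \cite[Lem.~2.5]{Oga22}, which is quoted in the proof of \cref{thm_JS_additivity} to produce the factorization of $Q(h)$ as an epimorphism followed by a monomorphism). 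Relatedly, your first proposed route to ``$Q$ is cohomological'' (factoring $Q$ through $\mod\C$ and an exact quotient identifying $\C/\N$ with a Serre quotient of $\mod\C$) is asserted rather than proved, and the second route presupposes the abelian structure whose construction is the missing step. As it stands, the proposal proves the Serre assertion but not the abelian or cohomological ones.
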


We include a simple example to demonstrate that even under very nice assumptions, we cannot hope that $\N$ is a Serre subcategory of $\C^R_\N$.

\begin{example}
\label{example:not-Serre-in-CR}
    Let $\C$ denote the cluster category associated to the quiver $1\to 2$ (in the sense of \cite{BMRRT}) and consider the cluster tilting subcategory $\N\deff\add(2\oplus \substack{1\\2})\sse\C$. Since $\cone(\N,\N) = \C$ is satisfied, we have $\mathbb{E}_\N^R(C,A) = [\N](C,A)$ by \cref{lem_specific_relative_structures}. Now consider the triangle 
    $1[-1] \to 2\to \substack{1\\2}\overset{h}{\to} 1$ in $\C$. Since $\substack{1\\2}\in\N$, we see that $1[-1] \to 2\to \substack{1\\2}$ is an $\fs^R_\N$-conflation. However, $1[-1]$ does not lie in $\N$. Thus, $\N$ is not thick in $\C^R_\N$ and so certainly cannot be Serre.
\end{example}

Despite this example, we need to understand how $Q$ acts on $\fs^R_\N$-conflations in order to connect this viewpoint to the aforementioned index. 
It turns out that $Q$ sends $\fs^R_\N$-conflations to right exact sequences in $\C/\N$. 

\begin{definition}\label{def:left-right-exact-functors}
\cite[Def.~2.7]{Oga21} 
    Suppose $(\D,\BF,\ft)$ is an extriangulated category and $\A$ is an abelian category. A covariant additive functor $F\colon \D \to \A$ is \emph{right exact} if, for every $\ft$-conflation
    $\begin{tikzcd}[column sep=0.5cm]
        A \arrow{r}{f}& B \arrow{r}{g}& C,
    \end{tikzcd}$
    the sequence 
    $\begin{tikzcd}[column sep=0.6cm]
        FA \arrow{r}{Ff}& FB \arrow{r}{Fg}& FC \arrow{r}& 0 
    \end{tikzcd}$
    is exact in $\A$. 
    \emph{Left exact} functors are defined dually.
\end{definition}

In the setup of \cref{def:left-right-exact-functors}, suppose that $\Ext_\A^1(C,A)$ is a set for all $A,C\in\A$, and equip $\A$ with its canonical extriangulated structure $(\A,\Ext_\A^1,\fu)$. 
Then the functor $F$ is both left and right exact (as just defined), if and only if $F$ forms part of an exact functor 
$(\D,\BF,\ft)\to(\A,\Ext_\A^1,\fu)$ in the sense of \cref{Def_exact_functor}.
In this language we thus have:

\begin{corollary}
\label{cor_abel_loc2}
\cite[Cor.~4.4]{Oga22b}
The functor 
$Q\colon\C\to\C/\N$ 
induces a right (resp.\ left) exact functor 
$Q\colon \C^R_\N\to\C/\N$ 
(resp.\ 
$Q\colon \C^L_\N\to\C/\N$).
\begin{equation}\label{diag_one_sided_localization}
\begin{tikzcd}[column sep=1.5cm, row sep=0.2cm]
    {}&\C_{\N}^{R}\arrow[hook]{dl}\arrow[bend left]{drr}{\textnormal{right exact}}[swap]{Q}&{}&{}\\
    (\C,\BE,\fs)&{}&\C_{\N}\arrow[hook]{ul}\arrow[hook]{dl}\arrow{r}{Q}[swap]{\textnormal{exact}}&(\C/\N,\widetilde{\BE_\N},\widetilde{\fs_\N})\\
    {}&\C_{\N}^{L}\arrow[hook]{ul}\arrow[bend right]{urr}{Q}[swap]{\textnormal{left exact}}&{}&{}
\end{tikzcd}
\end{equation}
\end{corollary}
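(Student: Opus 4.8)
The plan is to combine two facts already available: by \cref{N-serre-local-is-abelian} the category $\C/\N$ is abelian and $Q\colon\C\to\C/\N$ is cohomological, and by \cref{lem_specific_relative_structures} an $\BE_\N^R$-extension is precisely an $\BE$-extension whose connecting morphism factors through an object of $\N$. Since $\C$ is skeletally small, so is $\C/\N$, so $\Ext^1_{\C/\N}$ takes values in sets and it is meaningful to speak of right/left exactness of $Q$ into $\C/\N$ in the sense of \cref{def:left-right-exact-functors}. The whole statement then reduces to chasing the long exact sequence attached to a cohomological functor.

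For the right exact case, let $A\overset{f}{\lra}B\overset{g}{\lra}C$ be an $\fs_\N^R$-conflation. As $\BE_\N^R\sse\BE$, it is an $\fs$-conflation, hence fits into a distinguished triangle $A\overset{f}{\lra}B\overset{g}{\lra}C\overset{h}{\lra}A[1]$ in $\C$ with $h\in\BE_\N^R(C,A)$. By \cref{lem_specific_relative_structures} we may write $h=h_2h_1$ with $h_1\colon C\to N$, $h_2\colon N\to A[1]$ and $N\in\N$. Since $\Ker Q=\N$ we have $QN=0$, so $Q(h)=Q(h_2)Q(h_1)=0$. Applying the cohomological functor $Q$ to this triangle gives exactness of $QA\overset{Qf}{\lra}QB\overset{Qg}{\lra}QC$ at $QB$; applying it to the rotation $B\overset{g}{\lra}C\overset{h}{\lra}A[1]\lra B[1]$ gives exactness of $QB\overset{Qg}{\lra}QC\overset{Q(h)}{\lra}Q(A[1])$ at $QC$, and as $Q(h)=0$ this forces $Qg$ to be an epimorphism. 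Hence $QA\overset{Qf}{\lra}QB\overset{Qg}{\lra}QC\lra 0$ is exact, i.e.\ $Q\colon\C_\N^R\to\C/\N$ is right exact.

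The left exact case is entirely dual: an $\fs_\N^L$-conflation underlies a triangle $A\overset{f}{\lra}B\overset{g}{\lra}C\overset{h}{\lra}A[1]$ with $h[-1]$ factoring through $\N$ (again \cref{lem_specific_relative_structures}), so $Q(h[-1])=0$; feeding the rotation $C[-1]\lra A\overset{f}{\lra}B\overset{g}{\lra}C$ into the cohomological functor $Q$ shows $Qf$ is a monomorphism, and exactness at $QB$ is as before, giving that $0\lra QA\overset{Qf}{\lra}QB\overset{Qg}{\lra}QC$ is exact. I do not expect a genuine obstacle here; the only points needing care are the bookkeeping that every $\fs_\N^R$-conflation (resp.\ $\fs_\N^L$-conflation) really extends to a triangle of $\C$ with connecting morphism in $\BE_\N^R$ (resp.\ with $h[-1]\in[\N]$), which is immediate from the definition of the relative structures in \cref{prop_relative2}, and the harmless set-theoretic remark above ensuring $\C/\N$ carries its canonical extriangulated structure.
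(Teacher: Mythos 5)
Your argument is correct. Note, however, that the paper does not prove this statement at all: it is quoted from the literature (\cite[Cor.~4.4]{Oga22}), so what you have done is supply a self-contained derivation from the other imported facts, and that derivation is sound. Concretely, you use that $Q$ is cohomological and $\C/\N$ is abelian (\cref{N-serre-local-is-abelian}), that $\Ker Q=\N$ (\cref{thm_abel_loc1}), and the description of $\BE^R_\N$ and $\BE^L_\N$ under the hypothesis $\cone(\N,\N)=\C$ (\cref{lem_specific_relative_structures}, valid in \cref{setup:abelian-local}); then an $\fs^R_\N$-conflation extends to a triangle whose connecting morphism $h$ factors through $\N$, so $Q(h)=0$, and the long exact sequence of the cohomological functor $Q$ gives exactness of $QA\to QB\to QC\to 0$, with the dual rotation argument (using $Q(h[-1])=0$) handling the left exact case. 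This is essentially the natural route, and it is the same mechanism that underlies the cited result in \cite{Oga22}. Two minor remarks: the set-theoretic preamble about $\Ext^1_{\C/\N}$ is unnecessary, since right/left exactness in the sense of \cref{def:left-right-exact-functors} only concerns exact sequences in the abelian category and does not invoke its extriangulated structure; and strictly speaking the only inputs you need from \cref{lem_specific_relative_structures} are the containments ``$\delta\in\BE^R_\N(C,A)$ implies $\delta\in[\N]$'' and ``$\delta\in\BE^L_\N(C,A)$ implies $\delta[-1]\in[\N]$'', which is exactly how you use it.
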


We end this subsection by giving examples of such abelian localizations.

\begin{example}
\label{ex_BM_BBD}
Using the theory developed in \cite{Oga22b}, we can obtain more information about the localization functor in the situations  considered in \cite{BBD} and \cite{BM12,BM13}.
The triplet $(\C,\BE,\fs)$ still denotes a skeletally small triangulated category (see \cref{setup:localization-of-tri-cats}).
\begin{enumerate}[label={\textup{(\arabic*)}}]

\item\label{t-structure} Let $(\U,\V)$ be a $t$-structure on $\C$, namely, a cotorsion pair (see \cite[Def.~2.1]{Nak11}) 
with $\U[1] \subseteq\U$.
There is a cohomological functor $H\colon \C\to\H$ to the abelian heart of $(\U,\V)$ and we put $\N\deff\Ker H$.
Then \cite[Thm.~5.8]{Oga22b} and \cref{cor_abel_loc2} tell us that $H$ induces a right exact functor $H\colon \C^R_\N\to\C/\N$ and a left exact functor $H\colon \C^L_\N\to\C/\N$.

We remark that the assertion still holds for the general heart construction of Abe--Nakaoka \cite{Nak11, AN12}.

\item\label{example:rigid}
Let $\X\sse\C$ be an additive, contravariantly finite, rigid subcategory that is closed under isomorphisms and direct summands, and 
put $\N\deff\X^{\perp_0}$. 
Then $\cone(\N,\N)=\C$ holds by \cref{lem_comparison_relative_structures}, 
and we have a natural exact equivalence $G\colon \C/\N\overset{\simeq}{\lra}\mod\X$ of abelian categories by universality as below; see \cite[Sec.~5.3.2]{Oga22b}.
\[
\begin{tikzcd}
    \N \arrow{r}& \C_{\N} \arrow{r}{Q}\arrow{d}[swap]{F(-)\deff\C(?,-)|_\X}& \C/\N \arrow[dotted]{dl}{\simeq}[swap]{G}\\
    & \mod\X &
\end{tikzcd}
\]
The functor $F$ is the restriction to $\X$ of the Yoneda embedding, namely, for $C\in\C$ 
we put $F(C)\deff \C(?,C)|_\X\in\mod\X$. 
In this case, by \cref{cor_abel_loc2}, the exact functor $Q$ induces a right exact functor $Q\colon \C^R_\N \to \C/\N$ and a left exact functor $Q\colon \C^L_\N\to \C/\N$.
\end{enumerate}
\end{example}

\section{An extriangulated resolution theorem}\label{sec_Quillen_resolution}

The aim of this section is to prove \cref{ThmB}(=\cref{thm_Quillen_resolution}), which is an extriangulated version of a resolution theorem for exact categories (see \cref{thm_Quillen_resolution_ex}).
We will use \cref{thm_Quillen_resolution} in \S\ref{sec_index} to investigate the relationship between Grothendieck groups arising from a triangulated category $(\C,\BE,\fs)$ and an $n$-cluster tilting subcategory $\X\sse\C$. In this case, we know each object in $\C$ has a finite $\X$-resolution (see \cref{example:n-CT-resolution}), but it is not necessarily true that $\X$ is closed under taking cocones of $\fs$-deflations since any morphism is an $\fs$-deflation. Thus, we must pass to a relative extriangulated structure on $\C$; see \cref{cor_Quillen_resolution_to_index} for details.

Although Quillen produced a resolution theorem in the framework of higher algebraic $K$-theory in \cite[\S4]{Qui73}, the idea was first established by Grothendieck at the level of $K_0$ (see \cite[Ch.~VIII, Thm.~4.2]{Bas68}, or \cite[Ch.~II, Thm. 7.6]{Wei13}). 
Let us begin by recalling the definition of the Grothendieck group for an extriangulated category. 
Recall that, for a skeletally small additive category $\C$, the \emph{split Grothendieck group $K_0^{\sp}(\C)$ of $\C$} is the free abelian group generated on the set of isomorphism classes $[A]$ for $A\in\C$, modulo the relations 
$[A] - [B] + [C]$ for each split exact sequence 
$\begin{tikzcd}[column sep=0.5cm, cramped]
    A \arrow{r}& B \arrow{r}& C
\end{tikzcd}$
in $\C$.

\begin{definition}\label{def:Groth-grou-of-extri-cat}
\cite[\S4]{ZZ21} 
    Let $(\C,\BE,\fs)$ be a skeletally small extriangulated category. 
    The \emph{Grothen\-dieck group} of $(\C,\BE,\fs)$ is defined to be 
\[
K_0(\C,\BE,\fs) 
	\deff 
        K_0^{\sp}(\C)  /  \braket{\, [A] - [B] + [C] \, |\begin{tikzcd}[column sep=0.5cm, ampersand replacement=\&]
			A \arrow{r}\& B \arrow{r}\& C
			\end{tikzcd} 
			\text{ is an $\fs$-conflation}\, } .
\]
If it will cause no confusion, we will abbreviate $K_0(\C,\BE,\fs)$ as $K_0(\C)$.
\end{definition}

To recall Grothendieck's Resolution Theorem, we need the following notion.

\begin{definition}
\label{def_X_resolution}
Let $(\C,\mathbb{E},\mathfrak{s})$ be an extriangulated category, let $\X\sse\C$ be a subcategory and fix an object $C\in\C$.
A \emph{finite $\X$-resolution (in $(\C,\mathbb{E},\mathfrak{s})$)} of $C$ is defined to be a complex
\begin{equation}
\label{seq_X_resolution}
\begin{tikzcd}
    X_n 
        \arrow{r}{f_{n-1}}
    & \cdots 
        \arrow{r}{f_1 g_2}
    & X_1 
        \arrow{r}{f_0 g_1}
    & X_0 
        \arrow{r}{g_0}
    & C,
\end{tikzcd}
\end{equation}
where $X_i\in\X$ for each $0\leq i \leq n$, and 
$C_{i+1}\overset{f_i}{\lra} X_i\overset{g_i}{\lra} C_i$ 
is an $\fs$-conflation 
for each $0\leq i \leq n-1$ with $(C_0, C_n)\deff(C, X_n)$.
In this case, we say that the $\X$-resolution \eqref{seq_X_resolution} is of \emph{length} $n$.
In particular, any object $X\in\X$ has an $\X$-resolution of length $0$.
The notion of \emph{finite $\X$-coresolution} is defined dually.
\end{definition}

Typical examples of such resolutions arise from cluster tilting theory.

\begin{example}
\label{example:n-CT-resolution}
Suppose $\C$ is an idempotent complete triangulated category with suspension $[1]$ and let $n\geq 2$ be an integer. 
Recall from \cite[\S3]{IY08} that a subcategory $\X\subseteq \C$ is called an \emph{$n$-cluster tilting subcategory} of $\C$ if $\X$ is functorially finite in $\C$ and 
\begin{align*}
    \X  &= \Set{C\in\C | \C(\X,C[i])=0\text{ for all } 1 \leq i \leq n-1}\\
        &= \Set{C\in\C | \C(C,\X[i])=0\text{ for all } 1 \leq i \leq n-1}.
\end{align*}  
Recall that, for subcategories $\U$ and $\V$ of $\C$, the subcategory $\U*\V$ of $\C$ consists of objects $C$ appearing in a triangle $U\lra C\lra V\lra U[1]$ with $U\in\U$ and $V\in\V$, and that this operation is associative \cite[p.~123]{IY08}.
If $\X$ is $n$-cluster tilting, then 
$(\X,\X*\cdots *\X[n-2])$ is a cotorsion pair \cite[Prop.~3.2, Thm.~3.4]{Bel15} and 
$\C = \X * \X[1] * \X[2] * \cdots * \X[n-1]$  \cite[Thm.~5.3]{Bel15}.
This implies that any $C\in\C$ admits an $\X$-resolution in the relative extriangulated category $\C^\X_R$($=\C_\N^R$ by \cref{lem_comparison_relative_structures} where $\N = \X^{\perp_{0}}$) 
of length at most $n-1$ (see \cite[Cor.~3.3]{IY08}, or \cite[Prop.~3.2, Thm.~3.4]{Bel15}).
\end{example}

\begin{theorem}[Resolution Theorem]
\label{thm_Quillen_resolution_ex}
\cite[Ch.~VIII, Thm.~4.2]{Bas68}
Let $\C$ be a skeletally small exact category.
Assume that $\X\sse\C$ is extension-closed and closed under 
taking kernels of admissible deflations in $\C$.
If any object $C\in\C$ admits a finite $\X$-resolution, then we have $K_0(\X)\cong K_0(\C)$.
\end{theorem}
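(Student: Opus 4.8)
\textbf{Proof strategy for Theorem~\ref{thm_Quillen_resolution_ex}.}
The plan is to establish mutually inverse group homomorphisms between $K_0(\X)$ and $K_0(\C)$. In one direction, the inclusion $\X\hookrightarrow\C$ is exact (every conflation in $\X$ is a conflation in $\C$, since $\X$ is extension-closed), so it induces a canonical homomorphism $\iota_*\colon K_0(\X)\to K_0(\C)$, $[X]\mapsto[X]$. The content of the theorem is that $\iota_*$ is an isomorphism, and I would do this by constructing an inverse directly on generators.

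First I would set up the resolution dimension. For $C\in\C$, define $\X\text{-}\mathrm{res.dim}(C)$ to be the minimal length of a finite $\X$-resolution of $C$. The hypothesis that $\X$ is closed under kernels of admissible deflations gives the crucial \emph{syzygy lemma}: if $X'\rtail X\onto C$ is a conflation with $X\in\X$ and $C$ has finite $\X$-resolution dimension, then $X'\in\X$ automatically, and more generally the kernel-syzygy $\Omega C$ appearing in any conflation $\Omega C\rtail X_0\onto C$ with $X_0\in\X$ has $\X\text{-}\mathrm{res.dim}(\Omega C)=\X\text{-}\mathrm{res.dim}(C)-1$. I would also need the standard \emph{well-definedness of resolutions} up to stable equivalence: given two conflations $Y\rtail X_0\onto C$ and $Y'\rtail X_0'\onto C$ with $X_0,X_0'\in\X$, Schanuel's lemma (valid in any exact category) gives $Y\oplus X_0'\cong Y'\oplus X_0$, hence $[Y]-[X_0]=[Y']-[X_0']$ in $K_0(\X)$ once we know $Y,Y'\in\X$.

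Next I would define a candidate inverse $\rho\colon K_0(\C)\to K_0(\X)$ on generators by induction on $\X\text{-}\mathrm{res.dim}(C)$: if $C\in\X$ set $\rho[C]=[C]$; otherwise pick a conflation $Y\rtail X_0\onto C$ with $X_0\in\X$ and set $\rho[C]=[X_0]-\rho[Y]$, where $Y$ has strictly smaller resolution dimension. By Schanuel's lemma this is independent of the chosen resolution, so $\rho$ is a well-defined function on isomorphism classes; unwinding a length-$n$ resolution gives the closed form $\rho[C]=\sum_{i=0}^{n}(-1)^i[X_i]$. Then I would check $\rho$ respects the defining relations of $K_0(\C)$: for a conflation $A\rtail B\onto C$ in $\C$, I would use the horseshoe lemma for exact categories to build compatible $\X$-resolutions of $A$, $B$, $C$ fitting into a short exact sequence of complexes, whence the alternating sums add up: $\rho[B]=\rho[A]+\rho[C]$. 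This yields a well-defined homomorphism $\rho\colon K_0(\C)\to K_0(\X)$. Finally, $\rho\circ\iota_*=\mathrm{id}$ is clear on generators $[X]$ with $X\in\X$, and $\iota_*\circ\rho=\mathrm{id}$ follows because for a resolution of $C$ one has $\sum(-1)^i[X_i]=[C]$ already in $K_0(\C)$ (repeatedly applying the conflation relations), so $\iota_*(\rho[C])=\sum(-1)^i[X_i]=[C]$.

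\textbf{Main obstacle.} The delicate point is proving $\rho$ is well-defined and additive without circularity: one must handle resolutions of possibly different lengths and verify the horseshoe construction stays inside $\X$ at every stage. The key leverage is that closure of $\X$ under kernels of admissible deflations forces all syzygies of finite-resolution-dimension objects back into $\X$, so Schanuel and the horseshoe lemma — both purely exact-category facts — can be applied with their conclusions landing in $\X$ rather than merely in $\C$. Once this bookkeeping is in place, the rest is formal. (In the extriangulated generalization of Theorem~\ref{ThmB}, the same skeleton works, with weak pushouts/pullbacks from Lemma~\ref{lem:NP19-cor-3-16} and axiom \ref{ET4} replacing the exact-category horseshoe and Schanuel arguments, which is presumably where the real work of the paper lies.)
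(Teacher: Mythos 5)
Your plan founders on three related points, all stemming from the fact that objects of $\X$ are not assumed projective relative to the deflations of $\C$. First, the opening half of your ``syzygy lemma'' is false: closure under kernels of admissible deflations only applies to conflations $A\rtail B\onto C$ with \emph{both} $B,C\in\X$, so a syzygy of an object of finite $\X$-resolution dimension need not land in $\X$ (take $\C=\mod k[x,y]$, $\X=\proj k[x,y]$, $C=k$: the kernel of $R\onto k$ is the maximal ideal, which is not projective). Only the weaker dimension-shifting statement survives, and it is not free --- it is exactly the comparison-of-resolutions fact you then try to outsource to Schanuel. Second, Schanuel's lemma is \emph{not} valid for arbitrary $\X$-resolutions in an exact category; it needs the covering objects to be projective with respect to the relevant deflations. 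Otherwise, comparing a non-split conflation $Y\rtail X_0\onto C$ with all terms in $\X$ against the trivial resolution $0\rtail C \xrightarrow{\id} C$ would force $X_0\cong Y\oplus C$, which already fails for $\Z/p\rtail\Z/p^2\onto\Z/p$. (The $K_0$-level identity $[Y]-[X_0]=[Y']-[X_0']$ can be rescued by pulling back the two deflations and using extension-closure, but only once you control where the syzygies live --- which is precisely the point at issue.) Third, and most seriously, the horseshoe lemma is unavailable: its construction requires lifting $X_C\to C$ through $B\onto C$, i.e.\ projectivity of $X_C$, so your proposed proof of additivity of $\rho$ on conflations collapses. This is not bookkeeping; it is the non-formal core of the theorem.

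The actual argument (Bass's, and the paper's extriangulated generalization in \cref{thm_Quillen_resolution}) takes a different, gap-free route: filter $\C$ by resolution length, $\X=\X_0\subseteq\X_1\subseteq\cdots$ with $\X_{i+1}=\cone(\X_i,\X)$, prove each $\X_i$ is extension-closed (Proposition~\ref{new-lemma-3-4}; in the exact case this is done with pullbacks along deflations and the Noether-type lemma, and it is where the real work replacing the horseshoe lemma happens), show $K_0(\X_i)\cong K_0(\X_{i+1})$ by sending $[X]$ to $[P_0]-[P_1]$ and checking independence of the presentation via a pullback diagram whose total object stays in $\X_i$ by extension-closure (Proposition~\ref{prop_sequence_of_K0}), and finally identify $K_0(\C)$ with $\colim_i K_0(\X_i)$, since every conflation of $\C$ already lives in some $\X_j$. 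If you want to salvage your direct construction of $\rho$ on $K_0(\C)$, you would in effect have to prove these same statements by a simultaneous induction on resolution dimension (well-definedness and additivity together, via pullbacks rather than Schanuel/horseshoe), at which point you have reproduced the filtration argument in disguise.
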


A typical example of the resolution theorem is as follows. 
For an abelian category $\C$ with enough projectives, if each object in $\C$ has finite projective dimension, then we have $K_0(\P)\cong K_0(\C)$ for $\P\sse\C$ the subcategory of projectives.
Note that $K_0(\P)$ is same as the split Grothendieck group $K_0^{\mathsf{sp}}(\P)$ of $\P$.

The following is an extriangulated version of the classical resolution theorem.
The dual of \cref{thm_Quillen_resolution} also holds.

\begin{theorem}[Extriangulated Resolution Theorem]
\label{thm_Quillen_resolution}
Let $(\C,\BE,\fs)$ be a skeletally small  extriangulated category. 
Suppose $\X$ is an extension-closed subcategory of $(\C,\BE,\fs)$, such that 
$\X$ is closed under taking cocones of $\mathfrak{s}$-deflations. 
If any object $C\in\C$ admits a finite $\X$-resolution, 
then we have an isomorphism
\begin{align*}
    K_0(\C,\BE,\fs) 
    &\overset{\cong}{\longleftrightarrow}
    K_0(\X,\BE|_{\X},\fs|_{\X})
    \\ 
    [C]
    &\longmapsto \sum_{i=0}^{n}(-1)^{i} [X_i]\\
    [X]
    &\longmapsfrom [X],
\end{align*}
where \eqref{seq_X_resolution} is an $\X$-resolution of $C\in\C$.
\end{theorem}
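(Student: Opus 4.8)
The strategy is the classical one for resolution theorems, adapted to the extriangulated setting. The map $\Phi\colon K_0(\C)\to K_0(\X)$ is to be defined by $[C]\mapsto \sum_{i=0}^n(-1)^i[X_i]$ using a chosen finite $\X$-resolution of $C$, and the map $\Psi\colon K_0(\X)\to K_0(\C)$ is induced by the (exact) inclusion functor, which is clearly well defined. Once $\Phi$ is shown to be a well-defined homomorphism, the composites $\Phi\Psi$ and $\Psi\Phi$ are both easily seen to be the identity: $\Phi\Psi=\id$ because an object $X\in\X$ has the length-$0$ resolution $X\to X$, and $\Psi\Phi=\id$ because each $\fs$-conflation $C_{i+1}\to X_i\to C_i$ in the resolution \eqref{seq_X_resolution} gives $[X_i]=[C_{i+1}]+[C_i]$ in $K_0(\C)$, so telescoping yields $\sum_i(-1)^i[X_i]=[C_0]=[C]$ in $K_0(\C)$. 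So the entire content is in constructing $\Phi$.

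\textbf{Key preliminary: independence of the resolution.} The first real step is to prove that $\sum_{i=0}^n(-1)^i[X_i]\in K_0(\X)$ does not depend on the chosen $\X$-resolution of $C$. The standard way is via a ``Schanuel-type'' lemma for extriangulated categories: given two $\fs$-conflations $K\to X\to C$ and $K'\to X'\to C$ with $X,X'\in\X$, one forms the weak pullback of the two deflations along each other using \cref{lem:NP19-cor-3-16}\ref{lemma2-3-part-1} (and its dual), producing an object $P$ sitting in $\fs$-conflations $K'\to P\to X$ and $K\to P\to X'$; since $\X$ is extension-closed these exhibit $K\oplus X'$ and $K'\oplus X$ as related through $P$, giving $[K]+[X']=[K']+[X]$ once one knows $K,K'$ can be chosen in a fixed resolving class — but more carefully, since cocones of $\fs$-deflations between objects of $\X$ lie in $\X$, the relevant syzygy objects stay inside $\X$ and the relation holds in $K_0(\X)$. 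One then runs an induction on resolution length, using that any two resolutions can be compared term by term (lifting the identity on $C$ up the two resolutions via the weak-lifting property of $\fs$-conflations, i.e.\ $f$ is a weak kernel of $g$), reducing to the Schanuel statement at each stage. I would package this as a lemma: \emph{if $X_n\to\cdots\to X_0\to C$ and $X_m'\to\cdots\to X_0'\to C$ are two finite $\X$-resolutions, then $\sum(-1)^i[X_i]=\sum(-1)^j[X_j']$ in $K_0(\X)$.}

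\textbf{Additivity on $\fs$-conflations.} The second step is to show the assignment respects the defining relations of $K_0(\C)$: for an $\fs$-conflation $A\to B\to C$ in $\C$, one must produce compatible $\X$-resolutions — a ``horseshoe lemma'' for extriangulated categories. Given $\X$-resolutions of $A$ and of $C$, one builds an $\X$-resolution of $B$ with $i$-th term $X_i^A\oplus X_i^C$, by repeatedly applying \ref{ET4}/(ET$4^{\op}$) and the weak pushout/pullback constructions of \cref{lem:NP19-cor-3-16} to splice the resolutions together one step at a time; extension-closedness of $\X$ keeps the middle terms in $\X$, and closure under cocones of $\fs$-deflations keeps the syzygies in $\X$. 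Combined with independence (Step 1), this gives $\Phi([B])=\Phi([A])+\Phi([C])$, and additivity on direct sums is the split case of the same. Hence $\Phi$ descends to a well-defined homomorphism $K_0(\C)\to K_0(\X)$, completing the proof.

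\textbf{Main obstacle.} The hard part is the weak, non-unique nature of kernels/cokernels in an extriangulated category: unlike in an exact category, the ``syzygy'' $\cocone(g)$ is only determined up to isomorphism once the deflation $g$ is fixed, but a resolution involves a chain of choices, and morphisms between resolutions lift only weakly (through weak kernels). Managing these choices — ensuring the horseshoe construction genuinely produces an $\fs$-conflation at each level (this is exactly where \ref{ET4} is needed, rather than just the weak pullback/pushout) and that the comparison maps in the Schanuel argument can be arranged to live in $\X$ — is the technical heart. The closure hypotheses on $\X$ (extension-closed, closed under cocones of $\fs$-deflations between objects of $\X$) are precisely what make every object appearing in these constructions stay in $\X$, so the bookkeeping must track carefully that each cocone taken is the cocone of a deflation \emph{between objects of $\X$}.
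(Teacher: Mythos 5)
Your overall direction (define $\Phi([C])=\sum_i(-1)^i[X_i]$, check well-definedness, note the inclusion-induced map is inverse) is the right goal, but both pillars of your well-definedness argument have genuine gaps, and the second is false as stated. The horseshoe lemma you invoke requires lifting the augmentation $X_0^C\to C$ through the $\fs$-deflation $B\to C$; under the hypotheses of the theorem objects of $\X$ are \emph{not} $\BE$-projective, and the weak-kernel/weak-cokernel properties of $\fs$-conflations never provide such a lift. Already in the exact case this kills the statement: take $\C=\operatorname{coh}\mathbb{P}^1$, $\X$ the vector bundles (extension-closed, closed under kernels of deflations, every object has a finite $\X$-resolution), and the conflation $\mathcal{O}(-2)\to\mathcal{O}(-1)^{\oplus2}\to\mathcal{O}$ with the length-zero resolutions of the outer terms; there is no deflation $\mathcal{O}(-2)\oplus\mathcal{O}\twoheadrightarrow\mathcal{O}(-1)^{\oplus2}$ at all, so no resolution of $B$ with $i$-th term $X_i^A\oplus X_i^C$ exists. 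The same projectivity problem undermines your independence step: ``lifting the identity on $C$ up the two resolutions'' means lifting $X_0'\to C$ through the deflation $X_0\to C$, which weak kernels do not give you. Your Schanuel-by-weak-pullback idea is sound in spirit (pulling one deflation back along the other via \cref{lem:NP19-cor-3-16} and identifying the two cocones), but the induction you sketch on top of it, comparing arbitrary-length resolutions term by term, does not go through as described, precisely because the syzygies $C_i$ are not in $\X$ and no comparison maps between the resolutions are available.

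The paper circumvents both problems by never comparing or splicing resolutions of a fixed object directly. It filters $\C$ by resolution length, setting $\X_i=\cone(\X_{i-1},\X)$, and proves two statements: each $\X_i$ is extension-closed in $(\C,\BE,\fs)$ (\cref{new-lemma-3-4} --- the technical heart, an (ET$4^{\op}$)/weak-pullback argument that plays the role your horseshoe was meant to play), and the inclusions induce isomorphisms $K_0(\X_i)\cong K_0(\X_{i+1})$ (\cref{prop_sequence_of_K0}, a Schanuel-type comparison of \emph{length-one} resolutions whose syzygies are only required to lie in $\X_i$, built by pullback via \cite[Prop.~3.15]{NP19} rather than by lifting). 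Then $K_0(\C)$ is identified with $\colim_i K_0(\X_i)$: independence of the chosen resolution and additivity on $\fs$-conflations become formal, since any two resolutions of $C$, and any conflation $A\to B\to C$, live inside some $\X_N$ where the relevant relations hold by definition of $K_0(\X_N)$. If you want to salvage your outline, you would have to replace the horseshoe and the term-by-term lifting by some such filtration (or an equivalent d\'evissage of the length), which is exactly the extra structure your proposal is missing.
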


\subsection{The proof of \texorpdfstring{\cref{thm_Quillen_resolution}}{Theorem~4.4}}
\label{proof-of-extriangulated-res-thm}

In this subsection, we work under the hypotheses of \cref{thm_Quillen_resolution}. 

\begin{setup}
Suppose $(\C,\BE,\fs)$ is a skeletally small extriangulated category and $\X\sse\C$ is an extension-closed subcategory that is closed under taking cocones of $\fs$-deflations, such that every object in $\C$ has a finite $\X$-resolution.
\end{setup}

Put $\X_0\deff\X$ and, for any $i>0$, we denote by $\X_i$ the subcategory of $\C$ consisting objects which admit finite $\X$-resolutions of length at most $i$.
Note that $\X_{i} = \cone(\X_{i-1},\X)$ and it is additive and closed under isomorphisms. 
Furthermore, we have an ascending chain $\X_0\subseteq \X_1\subseteq \X_2\subseteq \cdots$, and the union $\bigcup_{i\geq 0}\X_i$ coincides with $\C$ because each object in $\C$ has a finite $\X$-resolution.
In other words, we have $\colim_{i}\X_i=\C$, where the left-hand side is a colimit in the category of additive categories and functors.

\begin{proposition}
\label{new-lemma-3-4}
For all $i\geq 0$, the subcategory $\X_{i}$ is extension-closed in $(\C,\BE,\fs)$, and hence inherits an extriangulated structure.
\end{proposition}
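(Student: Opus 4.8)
The plan is to induct on $i$. The case $i=0$ is exactly the hypothesis that $\X=\X_0$ is extension-closed, so suppose $i\geq 1$ and that $\X_{i-1}$ is extension-closed. Let $A\xrightarrow{f}B\xrightarrow{g}C$ be an $\fs$-conflation with $A,C\in\X_i$, and fix witnesses of membership: $\fs$-conflations $A_1\to X_A\xrightarrow{m}A$ and $C_1\to X_C\xrightarrow{t}C$ with $X_A,X_C\in\X$ and $A_1,C_1\in\X_{i-1}$. Since $B$ admits a finite $\X$-resolution there is an $\fs$-deflation $p\colon X\to B$ with $X\in\X$; set $B'\deff\cocone(p)$. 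As $\X_i=\cone(\X_{i-1},\X)$, it suffices to show $B'\in\X_{i-1}$.

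The key tool is an auxiliary lemma that I would prove first and unconditionally: \emph{if $P\to Q\xrightarrow{r}R$ is an $\fs$-conflation with $R\in\X$ and $Q\in\X_j$, then $P\in\X_j$.} For $j=0$ this is precisely the assumption that $\X$ is closed under cocones of $\fs$-deflations. For $j\geq 1$, pick an $\fs$-conflation $Q_1\to X'\xrightarrow{s}Q$ witnessing $Q\in\X_j$ (so $X'\in\X$ and $Q_1\in\X_{j-1}$); as $r$ and $s$ are $\fs$-deflations, so is $rs\colon X'\to R$ by the dual \textup{(ET$4^{\op}$)} of \ref{ET4}, and the octahedral diagram it provides gives $\fs$-conflations $T\to X'\xrightarrow{rs}R$ and $Q_1\to T\to P$ with $T\deff\cocone(rs)$. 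The first forces $T\in\X$ (again since $\X$ is closed under cocones of $\fs$-deflations), and then the second exhibits $P$ as an object of $\cone(\X_{j-1},\X)=\X_j$.

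Now, since $g$ and $p$ are $\fs$-deflations, so is $gp\colon X\to C$, and \textup{(ET$4^{\op}$)} produces $\fs$-conflations $D\to X\xrightarrow{gp}C$ and $B'\to D\to A$ with $D\deff\cocone(gp)$. Form, as in \cref{lem:NP19-cor-3-16}, the weak pullback $E$ of the $\fs$-deflations $X\xrightarrow{gp}C$ and $X_C\xrightarrow{t}C$; pulling back an $\fs$-deflation along an $\fs$-deflation again yields $\fs$-deflations whose cocones are those of the original maps, so $E$ fits into $\fs$-conflations $C_1\to E\to X$ and $D\to E\to X_C$. Applying the inductive hypothesis to the first (both $C_1$ and $X$ lie in $\X_{i-1}$, since $\X=\X_0\subseteq\X_{i-1}$) gives $E\in\X_{i-1}$, and then the auxiliary lemma applied to the second gives $D\in\X_{i-1}$. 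Repeating the same manoeuvre with the $\fs$-conflation $B'\to D\to A$ and the witness $X_A\xrightarrow{m}A$ produces $\fs$-conflations $B'\to E'\to X_A$ and $A_1\to E'\to D$; as $A_1,D\in\X_{i-1}$ the inductive hypothesis gives $E'\in\X_{i-1}$, whence the auxiliary lemma gives $B'\in\X_{i-1}$, as required. The last clause of the statement then follows from \cref{prop_ext_closed_ET}.

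The step I expect to be most delicate is making the auxiliary lemma and the ``pullback of a deflation along a deflation'' assertion precise — each is a matter of invoking \textup{(ET$4^{\op}$)}, respectively \cref{lem:NP19-cor-3-16}, in exactly the right form — and keeping track that no circularity arises: every appeal is either to the unconditional auxiliary lemma or to extension-closure of $\X_{i-1}$, never of $\X_i$. Conceptually, the one idea that makes everything go through is to stop trying to build an $\X$-resolution of $B$ by a horseshoe (which would require lifting $m$, $t$ and $p$ through deflations, impossible in general) and instead to push the single syzygy $B'=\cocone(p)$ around using \textup{(ET$4^{\op}$)} and weak pullbacks.
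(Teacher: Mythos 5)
Your proof is correct, but it takes a genuinely different and leaner route than the paper's. The paper also inducts on $i$, but it proves the step by manufacturing an explicit conflation $Y_B\to X_A\oplus X\to B$ with $Y_B\in\X_i$: it forms the weak pullback of $g$ along the witness deflation $X_C\to C$, resolves that pullback, and then has to identify the cocone $D$ of the resulting deflation $X_A\oplus X\to B'$ with an object already known to lie in $\X_i$ — this is what forces the lengthy comparison of two realizations of $(f')^*\zeta$ (the matrix bookkeeping and the appeal to \cite[Cor.~3.6]{NP19}) that occupies most of the printed proof. You instead reduce to showing that a single syzygy $B'=\cocone(p)$ of $B$ lies in $\X_{i-1}$, which together with the conflation $B'\to X\to B$ immediately exhibits $B\in\cone(\X_{i-1},\X)=\X_i$; you reach this via two applications of the ``two deflations over a common target'' diagram plus your unconditional auxiliary lemma that $\X_j$ is closed under cocones of $\fs$-deflations onto objects of $\X$ — a clean \textup{(ET$4^{\op}$)} argument that the paper does not isolate. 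This sidesteps the realization-comparison step entirely at the modest cost of proving the auxiliary lemma, and the induction is non-circular exactly as you say. One point of hygiene: the statement you actually need in each ``manoeuvre'' — one object $E$ fitting simultaneously into conflations $C_1\to E\to X$ and $D\to E\to X_C$ — is precisely \cite[Prop.~3.15]{NP19} (which the paper invokes repeatedly), rather than \cref{lem:NP19-cor-3-16} alone, which gives the weak pullback square and the conflation $E\to X_C\oplus X\to C$ but not the second conflation without extra work; with that citation in place every step goes through.
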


\begin{proof}
We prove this by induction on $i\geq 0$. First, note that $\X_0 = \X$ is extension-closed by assumption. Thus, suppose $\X_{i}$ is extension-closed in $\C$ for some $i\geq 0$, so that we may show $\X_{i+1}$ is extension-closed. To this end, let 
$\begin{tikzcd}[column sep=0.5cm]
A \arrow{r}{f}& B\arrow{r}{g} & C \arrow[dashed]{r}{\delta}& {}
\end{tikzcd}$
be an $\fs$-triangle in $\C$ with $A,C\in\X_{i+1} = \cone(\X_{i}, \X)$. 
Then we know there are $\fs$-triangles
\begin{equation}\label{eqn:alpha}
\begin{tikzcd}[column sep=0.5cm]
Y_{A} \arrow{r}{a'}& X_{A}\arrow{r}{a} & A \arrow[dashed]{r}{\alpha}& {}
\end{tikzcd}
\end{equation}
and 
$\begin{tikzcd}[column sep=0.5cm]
Y_{C} \arrow{r}{c'}& X_{C}\arrow{r}{c} & C \arrow[dashed]{r}{\beta}& {},
\end{tikzcd}$
with $Y_{A},Y_{C}\in \X_i$ and $X_A,X_C\in \X$. 
By \cite[Prop.~3.15]{NP19}, there is a commutative diagram
\begin{equation}
\label{eqn:3-4-eqn1}
\begin{tikzcd}
{}& Y_C \arrow[equals]{r}\arrow{d}{}& Y_C \arrow{d}{c'}& {} \\ 
A \arrow{r}{f'}\arrow[equals]{d}& B' \arrow{r}{g'}\arrow{d}{h}& X_C \arrow{d}{c}\arrow[dashed]{r}{c^{*}\delta}& {} \\
A \arrow{r}{f}& B\arrow{r}{g} \arrow[dashed]{d}{g^{*}\beta}& C \arrow[dashed]{r}{\delta}\arrow[dashed]{d}{\beta}& {},\\
{} &{} &{} &{}
\end{tikzcd}    
\end{equation}
where the rows and columns are $\fs$-triangles.
In particular, we see that $h$ is an $\fs$-deflation.

As each object in $\C$ is assumed to admit a finite $\X$-resolution, there is an $\fs$-triangle of the form 
$\begin{tikzcd}[column sep=0.5cm]
Y \arrow{r}{q}& X\arrow{r}{p} & B' \arrow[dashed]{r}{\gamma}& {}
\end{tikzcd}$
with $X\in\X$ (and $Y\in \X_{r}$ for some $r\geq 0$). 
By \rm{(ET$4^{\op}$)} applied to the composition 
$\begin{tikzcd}[column sep=0.5cm]
X \arrow[two heads]{r}{p}& B' \arrow[two heads]{r}{g'}& X_C,
\end{tikzcd}$ we obtain the following commutative diagram 
\begin{equation}
\label{eqn:3-4-eqn2}
\begin{tikzcd}
Y\arrow[equals]{r}{}\arrow[]{d}{}&Y\arrow[]{d}{}&{}&{}\\
Q\arrow[]{r}{e}\arrow[]{d}{p'}&X\arrow[]{r}{g'p}\arrow[]{d}{p}&X_C\arrow[dashed]{r}{\epsilon}\arrow[equals]{d}{}&{}\\
A\arrow[]{r}{f'}\arrow[dashed]{d}{(f')^{*}\gamma}&B'\arrow[dashed]{d}{\gamma}\arrow[]{r}{g'}&X_C\arrow[dashed]{r}{c^{*}\delta}&{},\\
{}&{}&{}&{}
\end{tikzcd}
\end{equation}
where the rows and columns are $\fs$-triangles.
Since $\X$ is closed under taking cocones of $\fs$-deflations, we see that $Q = \cocone(g'p)\in\X$.

By \cite[Cor.~3.16]{NP19}, the morphism $\begin{psmallmatrix}f'a, & p\end{psmallmatrix}\colon X_A\oplus X\to B'$ is an $\fs$-deflation. Let us recall how the corresponding $\fs$-triangle is found. 
We have the $\fs$-triangle 
$\begin{tikzcd}[column sep=0.5cm]
Y \arrow{r}{q}& X\arrow{r}{p} & B' \arrow[dashed]{r}{\gamma}& {}
\end{tikzcd}$
from the middle column of \eqref{eqn:3-4-eqn2}, 
and the morphism
$f'a\colon X_{A} \to B'$. 
We realise the $\BE$-extension $(f'a)^*\gamma$ as follows:
\begin{equation}
\label{eqn:f-prime-a-gamma}
\begin{tikzcd}
Y \arrow{r}{s} \arrow[equals]{d}& D\arrow{r}{r} \arrow{d}& X_{A} \arrow{d}{f'a}\arrow[dashed]{r}{(f'a)^*\gamma}& {}\\
Y \arrow{r}{t} &X \arrow{r}{p} & B' \arrow[dashed]{r}{\gamma}& {}
\end{tikzcd}
\end{equation}
Then \cite[dual of Prop.~1.20]{LN19} yields an $\fs$-triangle 
\[
\begin{tikzcd}[column sep=1.5cm,ampersand replacement=\&]
D 
	\arrow{r}{} 
\& X_{A}\oplus X 
	\arrow{r}{\begin{psmallmatrix}f'a, & p\end{psmallmatrix}}
\& B' 
	\arrow[dashed]{r}{s_{*}\gamma}
\&{.}
\end{tikzcd}
\]

Next we show that $D\in\X_{i}$. 
Consider the $\fs$-triangles
$\begin{tikzcd}[column sep=0.5cm]
Y_{A} \arrow{r}{a'}& X_{A}\arrow{r}{a} & A \arrow[dashed]{r}{\alpha}& {}
\end{tikzcd}$ 
(see \eqref{eqn:alpha})
and
$\begin{tikzcd}[column sep=0.5cm]
Y \arrow{r}{}& Q\arrow{r}{p'} & A \arrow[dashed]{r}{(f')^{*}\gamma}& {}
\end{tikzcd}$
(from the leftmost column of \eqref{eqn:3-4-eqn2}).
Applying \cite[Prop.~3.15]{NP19}, we get a commutative diagram 
\[
\begin{tikzcd}[ampersand replacement=\&, column sep=3cm]
{} \& Y \arrow[equals]{r}{}\arrow{d}{s}\& Y\arrow{d}{} \&{}\\
Y_A \arrow{r}{}\arrow[equals]{d}{}\& D \arrow{r}{}\arrow{d}{r}\& Q\arrow[dashed]{r}{(p')^* \alpha} \arrow{d}{p'}\& {} \\
Y_A\arrow{r}{a'} \& X_A \arrow{r}{a}\arrow[dashed]{d}{a^{*}(f')^{*}\gamma=(f'a)^*\gamma}\& A \arrow[dashed]{r}{\alpha}\arrow[dashed]{d}{(f')^*\gamma}\& {} \\
{} \& {} \& {} \& {} 
\end{tikzcd}
\]
where the rows and columns are $\fs$-triangles. 
We use here that 
$
\fs((f'a)^{*}\gamma) = 
[\begin{tikzcd}[column sep=0.5cm] Y \arrow{r}{s}& D\arrow{r}{r} & X_{A}\end{tikzcd}]
$
as in \eqref{eqn:f-prime-a-gamma}.
Since $Y_A\in\X_{i}$ by assumption and $Q\in\X\sse\X_{i}$, we see that $D\in\X_{i}$ because $\X_i$ is extension-closed by the inductive hypothesis.

Now we are in position to show $B\in\X_{i+1}$ as needed. 
Applying axiom \rm{(ET$4^{\op}$)} to the composition
$\begin{tikzcd}[column sep=1.2cm, ampersand replacement=\&]
    X_A \oplus X \arrow[two heads]{r}{\begin{psmallmatrix}
        f'a, & p
    \end{psmallmatrix}}\& B' \arrow[two heads]{r}{h}\& B
\end{tikzcd}$ (recall $h$ was obtained in \eqref{eqn:3-4-eqn1}) yields a commutative diagram
\begin{equation*}
\begin{tikzcd}[column sep=2cm,ampersand replacement=\&]
D \arrow[equals]{r}{}\arrow{d}{}\& D \arrow{d}
\& {} \& {}\\
Y_B \arrow{r}{}\arrow{d}{}\& X_A\oplus X \arrow{r}{h\circ\begin{psmallmatrix}
    f'a, & p
\end{psmallmatrix}}\arrow{d}{\begin{psmallmatrix}
    f'a, & p
\end{psmallmatrix}}\& B \arrow[dashed]{r}{}\arrow[equals]{d}{}\& {}\\
Y_C \arrow{r}{}\arrow[dashed]{d}{}\& B' \arrow{r}{h}\arrow[dashed]{d}{s_{*}\gamma}\& B\arrow[dashed]{r}{g^*\beta} \& {}\\
{}\&{}\&{}\&{}
\end{tikzcd}
\end{equation*}
where the rows and columns are $\fs$-triangles. 
Note that $D,Y_C\in\X_i$ implies $Y_B\in\X_i$ by extension-closure. 
As $X_A\oplus X\in\X$, we see that 
$B=\cone(\begin{tikzcd}
    Y_B \arrow[tail]{r}& X_A \oplus X
\end{tikzcd})\in\cone(\X_i,\X) = \X_{i+1}$
and we are done. 
\end{proof}

We denote the inherited extriangulated structure on $\X_i$ by 
$(\X_{i},\BE|_{\X_{i}},\fs|_{\X_{i}})$ (see \S\ref{sec:ext-closed-extri-cats}). 
Now we investigate the Grothendieck groups $K_0(\X_{i})\deff K_0(\X_{i},\BE|_{\X_{i}},\fs|_{\X_{i}})$ of these extriangulated subcategories of $(\C,\BE,\fs)$.

\begin{proposition}
\label{prop_sequence_of_K0}
The ascending chain $\X_0\subseteq \X_1\subseteq \X_2\subseteq \cdots$ induces a sequence of isomorphisms $K_0(\X_0)\cong K_0(\X_1)\cong K_0(\X_2)\cong \cdots$ of Grothendieck groups.
\end{proposition}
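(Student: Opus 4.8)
The plan is to prove that, for each $i\geq 0$, the inclusion $\X_{i}\sse\X_{i+1}$ induces an isomorphism on Grothendieck groups; the asserted chain $K_{0}(\X_{0})\cong K_{0}(\X_{1})\cong\cdots$ is then obtained by composition. By \cref{new-lemma-3-4}, both $\X_{i}$ and $\X_{i+1}$ are extension-closed in $(\C,\BE,\fs)$ and carry the inherited extriangulated structures; since $\X_{i}\sse\X_{i+1}$, the former is extension-closed in the latter, so the inclusion is an exact functor (cf.\ \cref{ex_exact_functor}) and induces a homomorphism $\iota_{*}\colon K_{0}(\X_{i})\lra K_{0}(\X_{i+1})$, $[X]\mapsto[X]$. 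Because $\X_{i+1}=\cone(\X_{i},\X)$ and $\X=\X_{0}\sse\X_{i}$, every $C\in\X_{i+1}$ fits into an $\fs$-conflation $Y\lra X\lra C$ with $X\in\X$ and $Y\in\X_{i}$; I call such a conflation an \emph{$\X_i$-presentation} of $C$. Reading one inside $K_{0}(\X_{i+1})$ gives $[C]=[X]-[Y]$, so $\iota_{*}$ is surjective.

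\textbf{An inverse map.} I would define $\phi([C])\deff[X]-[Y]\in K_{0}(\X_{i})$ for an $\X_i$-presentation $Y\lra X\lra C$ of $C\in\X_{i+1}$. For well-definedness, suppose $Y\lra X\lra C$ and $Y'\lra X'\lra C$ are two $\X_i$-presentations. Then $X\lra C$ and $X'\lra C$ are $\fs$-deflations, and pulling one back along the other via \cite[Prop.~3.15]{NP19} produces an object $E$ together with $\fs$-conflations $Y\lra E\lra X'$ and $Y'\lra E\lra X$. As $\X_{i}$ is extension-closed and $X,X'\in\X\sse\X_{i}$, we get $E\in\X_{i}$, and comparing the two resulting relations in $K_{0}(\X_{i})$ gives $[Y]+[X']=[E]=[Y']+[X]$, i.e.\ $[X]-[Y]=[X']-[Y']$. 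Taking direct sums of presentations shows $\phi$ is additive on direct sums, so it descends to a homomorphism $K_{0}^{\sp}(\X_{i+1})\lra K_{0}(\X_{i})$.

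\textbf{The crux: $\phi$ respects conflation relations.} Given an $\fs$-conflation $A\lra B\lra C$ with $A,B,C\in\X_{i+1}$, one must show $\phi([B])=\phi([A])+\phi([C])$. Here I would recycle the construction carried out in the proof of \cref{new-lemma-3-4}: starting from $\X_i$-presentations $Y_{A}\lra X_{A}\lra A$ and $Y_{C}\lra X_{C}\lra C$, that argument yields $X,Q\in\X$, $D\in\X_{i}$ and $Y_{B}\in\X_{i}$ fitting into $\fs$-conflations $Q\lra X\lra X_{C}$ (inside $\X$), $Y_{A}\lra D\lra Q$ and $D\lra Y_{B}\lra Y_{C}$ (inside $\X_{i}$), and an $\X_i$-presentation $Y_{B}\lra X_{A}\oplus X\lra B$. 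Reading off the associated relations in $K_{0}(\X_{i})$, namely $[X]=[Q]+[X_{C}]$, $[D]=[Y_{A}]+[Q]$ and $[Y_{B}]=[D]+[Y_{C}]$, and using well-definedness of $\phi$, we compute
\begin{align*}
\phi([B])&=[X_{A}]+[X]-[Y_{B}]=[X_{A}]+[Q]+[X_{C}]-[Y_{A}]-[Q]-[Y_{C}]\\
&=([X_{A}]-[Y_{A}])+([X_{C}]-[Y_{C}])=\phi([A])+\phi([C]).
\end{align*}
Hence $\phi\colon K_{0}(\X_{i+1})\lra K_{0}(\X_{i})$ is a well-defined homomorphism.

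\textbf{Conclusion.} We have $\iota_{*}\circ\phi=\id$, since an $\X_i$-presentation $Y\lra X\lra C$ read inside $K_{0}(\X_{i+1})$ gives $\iota_{*}(\phi([C]))=[X]-[Y]=[C]$; and $\phi\circ\iota_{*}=\id$, since for $X\in\X_{i}$ any $\X_i$-presentation $Y\lra X'\lra X$ read inside $K_{0}(\X_{i})$ gives $\phi(\iota_{*}([X]))=[X']-[Y]=[X]$. Therefore $\iota_{*}$ is an isomorphism, and the sequence $K_{0}(\X_{0})\cong K_{0}(\X_{1})\cong K_{0}(\X_{2})\cong\cdots$ follows. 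I expect the main obstacle to be the identity $\phi([B])=\phi([A])+\phi([C])$ — a horseshoe-type statement with an error term that must be checked to cancel — which is precisely why it is efficient to build on the diagram chase already performed in the proof of \cref{new-lemma-3-4} rather than redo it from scratch.
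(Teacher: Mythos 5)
Your proposal is correct and follows essentially the same route as the paper: you invert the inclusion-induced map by sending $[C]$ to $[X]-[Y]$ for a presentation $Y\lra X\lra C$, and you prove well-definedness via the \cite[Prop.~3.15]{NP19} diagram together with extension-closure of $\X_i$, exactly as in the paper's proof. The only difference is one of detail: the paper checks well-definedness against an arbitrary conflation with both end terms in $\X_i$ and then declares the remaining verifications straightforward, whereas you spell out the additivity on $\fs$-conflations by recycling the conflations produced in the proof of \cref{new-lemma-3-4} and verify the mutual-inverse identities explicitly -- all of which is consistent with the paper's argument.
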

\begin{proof}
Fix an integer $i\geq 0$.
We shall show that the natural group homomorphism
$\phi\colon K_0(\X_i)\to K_0(\X_{i+1})$
given by $\phi([C]) = [C]$ 
is an isomorphism by constructing an inverse as follows.
Let $C\in\X_{i+1} = \cone(\X_{i},\X)$ be arbitrary. Then there is an $\fs$-conflation 
$P_1\overset{p'}{\lra} P_0\overset{p}{\lra} C$ 
with $P_0\in\X\sse\X_{i}$ and $P_1\in\X_i$. 
We claim that the assignment 
$\psi\colon [C]\mapsto [P_0]-[P_1]$ 
gives rise to a group homomorphism 
$\psi\colon K_0(\X_{i+1})\to K_0(\X_{i})$.

To see this, let $Q_1\overset{q'}{\lra} Q_0\overset{q}{\lra} C$ be any $\mathfrak{s}$-conflation with $Q_0, Q_1\in\X_i$. 
By \cite[Prop.~3.15]{NP19}, there is a commutative diagram
\[
\begin{tikzcd}
{}&Q_1\arrow[equals]{r}{}\arrow{d}{}&Q_1\arrow{d}{}\\
P_1\arrow{r}{}\arrow[equals]{d}{}&P\arrow{r}{}\arrow{d}{}&Q_0\arrow{d}{}\\
P_1\arrow{r}{}&P_0\arrow{r}{}&C
\end{tikzcd}
\]
where all rows and columns are $\mathfrak{s}$-conflations.
Since $\X_i$ is extension-closed by \cref{new-lemma-3-4}, we see that $P$ lies in $\X_i$. Hence, in $K_0(\X_i)$ we have 
$[P_1]+[Q_0] 
= [P]
= [Q_1]+[P_0]$,
whence
$[Q_0]-[Q_1] = [P_0]-[P_1]$ and 
$\psi\colon \Ob(\X_{i+1}) \to K_{0}(\X_{i})$ is well-defined.
It is straightforward to check that $\psi$ induces a group homomorphism as claimed, and that $\phi$ and $\psi$ are mutually inverse. 
\end{proof}

We are now in position to prove \cref{thm_Quillen_resolution}.

\begin{proof}[Proof of \cref{thm_Quillen_resolution}]
We will show that $K_0(\C) = K_0(\C,\BE,\fs)$ satisfies the universal property of the filtered colimit $\colim_{i} K_0(\X_i)$. 
First, for each integer $i\geq 0$, there are the canonical group homomorphisms 
$\alpha_i \colon K_0(\X_i)\to K_0(\C)$
and 
$\beta_i \colon K_0(\X_i)\to \colim_{i} K_0(\X_i)$
given by $[C]\mapsto [C]$. 
It follows that there is a unique group homomorphism $\gamma\colon\colim_{i} K_0(\X_i) \to K_0(\C)$ given by $[C]\mapsto [C]$.

On the other hand, let $C\in\C$ be arbitrary and suppose \eqref{seq_X_resolution} is an $\X$-resolution of $C$. 
Define $\delta\colon \Ob(\C)\to \colim_{i} K_0(\X_i)$ by 
$\delta(C) = \sum_{i=0}^n (-1)^{i} [X_i]$. To see that this is independent of the chosen $\X$-resolution, suppose 
\begin{equation}
\label{eqn:second-filt}
\begin{tikzcd}[column sep=0.5cm]
    Y_m \arrow{rr}\arrow[equals]{dr}&& Y_{m-1}\arrow{rr}\arrow{dr} && Y_{m-2}\arrow{r} &\cdots\arrow{r}& Y_1 \arrow{rr}\arrow{dr}&& Y_0 \arrow{dr}\arrow{rr}&& C\\
    & D_m \arrow{ur}&& D_{m-1}\arrow{ur}&&&&D_1 \arrow{ur}&& D_0\arrow[equals]{ur}
\end{tikzcd}
\end{equation}
is another $\X$-resolution of $C$. 
Set $N\deff \max\{n,m\}$. Then all the $\fs$-conflations involved in \eqref{seq_X_resolution} and \eqref{eqn:second-filt} are $\fs|_{\X_{N}}$-conflations. In particular, in $K_0(\X_{N})$ and hence also in $\colim_{i} K_0(\X_i)$, we have that 
$
\sum_{i=0}^n (-1)^{i} [X_i]
    = [C]
    = \sum_{i=0}^m (-1)^{i} [Y_i]
$.
It is clear that $\delta$ is constant on isoclasses of objects. 

Furthermore, any $\mathfrak{s}$-conflation $A\lra B\lra C$ in $(\C,\BE,\fs)$ lies in $\X_j$ for some sufficiently large integer $j$. 
Thus, by definition, $[A]-[B]+[C]=0$ occurs in $K_0(\X_j)$ and also in $\colim_{i} K_0(\X_i)$. 
Hence, $\delta$ induces a group homomorphism 
$\delta\colon K_0(\C)\to \colim_{i} K_0(\X_i)$. 
Finally, it is clear that $\gamma$ and $\delta$ are mutually inverse.
As the canonical morphism  $K_0(\X_{i})\xto{\cong}K_0(\X_{i+1})$ is an isomorphism for any $i\geq 0$ by \cref{prop_sequence_of_K0}, the canonical morphism $K_0(\X)\to \colim_{i} K_0(\X_i)$ is also an isomorphism and we are done.
\end{proof}

\section{Applications to the index in triangulated categories}
\label{sec_index}

As an application of our Extriangulated Resolution Theorem (\cref{thm_Quillen_resolution}), in \S\ref{subsec:Palu-JS-indices} we recover some index isomorphisms that have recently appeared in the literature. 
This allows us to propose an index with respect to an extension-closed subcategory in \S\ref{subsec_additive_formula}, as well as establish an additivity formula for this index. 
Lastly, we prove a generalization of  Fedele's \cite[Thm.~C]{Fed21} in \S\ref{subsec_d_angulated_cat}.


\subsection{The PPPP and JS index isomorphisms}
\label{subsec:Palu-JS-indices}
Throughout \S\ref{subsec:Palu-JS-indices}, we fix the following setup.

\begin{setup}
\label{setup:Palu-JS-indices}
Suppose $(\C,\mathbb{E},\mathfrak{s})$  corresponds to a skeletally small, idempotent complete, triangulated category $\C$.
We also assume $\X\subseteq\C$ is an additive subcategory that is contravariantly finite, rigid and closed under direct summands.
\end{setup}

Recall from \cref{example:n-CT-resolution} that if $\X$ is $n$-cluster tilting ($n\geq 2$), then 
any object $C\in\C$ admits a finite $\X$-resolution
$
X_{n-1}\to \cdots\to X_1\to X_0\to C
$ 
in $\C^\X_R$ 
of length (at most) $n-1$.

\begin{definition}\label{def_index}
\cite[Def.~2.1]{Pal08} 
Suppose $\X$ is a $2$-cluster tilting subcategory of $\C$. 
The \emph{Palu index with respect to $\X$} of $C$ is the element 
$\ind_\X(C)\deff [X_0]^{\sp}-[X_1]^{\sp}\in K^{\mathsf{sp}}_0(\X)$, 
where $X_1\to X_0\to C$ is an $\X$-resolution in $\C^\X_R$ of $C$.
\end{definition}

It is straightforward to check that the following isomorphism is a direct consequence of \cite[Prop.~4.11]{PPPP19}. 
We call it the \emph{PPPP index isomorphism}; it is a special case of \cref{cor_JS_index}.

\begin{corollary}[PPPP index isomorphism]
\label{cor_Palu_index}
Suppose $\X$ is $2$-cluster tilting in $\C$. 
Then the Palu index $\ind_\X$ yields the following isomorphism of abelian groups.
\begin{align}
K_0(\C^\X_R) 	&\overset{\cong}{\longleftrightarrow}K_0^{\sp}(\X) \label{eq_PPPP}\\
[C]^\X_R &\longmapsto \ind_\X(C)  \nonumber\\
[X]^\X_R &\longmapsfrom [X]^{\sp}\nonumber
\end{align}
\end{corollary}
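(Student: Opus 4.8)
The plan is to deduce this isomorphism from our Extriangulated Resolution Theorem (\cref{thm_Quillen_resolution}), applied to the relative extriangulated category $\C^\X_R = (\C,\BE^\X_R,\fs^\X_R)$ with $\X$ playing the role of the resolving subcategory. First I would record the key consequence of rigidity: if $A, C\in\X$, then $\BE^\X_R(C,A) = 0$. Indeed, by \cref{def_relative1} any $h\in\BE^\X_R(C,A)$ satisfies $h\circ x = 0$ for every $x\colon X\to C$ with $X\in\X$, and taking $X = C$ and $x = \id_C$ forces $h = 0$. Consequently every $\fs^\X_R$-conflation with both outer terms in $\X$ splits, which yields the two hypotheses of \cref{thm_Quillen_resolution} simultaneously: $\X$ is extension-closed in $\C^\X_R$ (the middle term of such a conflation is a direct sum of the outer terms), and $\X$ is closed under taking cocones of $\fs^\X_R$-deflations $g\colon B\to C$ with $B,C\in\X$ (the cocone is a direct summand of $B$, and $\X$ is closed under direct summands). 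The remaining hypothesis is the existence of finite resolutions: when $\X$ is $2$-cluster tilting, \cref{example:n-CT-resolution} with $n = 2$ shows every $C\in\C$ admits a finite $\X$-resolution $X_1\to X_0\to C$ in $\C^\X_R$ of length at most $1$.

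Granting this, \cref{thm_Quillen_resolution} produces a group isomorphism $K_0(\C^\X_R) \cong K_0(\X,\BE^\X_R|_\X,\fs^\X_R|_\X)$ under which $[C]^\X_R\mapsto [X_0] - [X_1]$ for any such $\X$-resolution, while $[X]\mapsto [X]^\X_R$ on the nose. I would then identify the target group. Since $\BE^\X_R|_\X = 0$ by the computation above, the $\fs^\X_R|_\X$-conflations in $\X$ are precisely the split exact sequences, so \cref{def:Groth-grou-of-extri-cat} gives $K_0(\X,\BE^\X_R|_\X,\fs^\X_R|_\X) = K_0^{\sp}(\X)$. Under this identification the formula reads $[C]^\X_R\mapsto [X_0]^{\sp} - [X_1]^{\sp} = \ind_\X(C)$ by \cref{def_index}, and the inverse sends $[X]^{\sp}$ to $[X]^\X_R$, which is exactly the asserted pair of mutually inverse maps.

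I do not expect a genuine obstacle here; the work is bookkeeping. The one point requiring care is to check that the $\X$-resolution appearing in \cref{def_index} is literally the datum fed into \cref{thm_Quillen_resolution}, so that the well-definedness of $\ind_\X$ (independence of the chosen resolution) is inherited from the well-definedness already established for the isomorphism of \cref{thm_Quillen_resolution}. Alternatively, one could bypass \cref{thm_Quillen_resolution} by translating the hypotheses of \cite[Prop.~4.11]{PPPP19}, or simply observe that this is the $n = 2$ instance of the more general \cref{cor_JS_index}; the route through \cref{thm_Quillen_resolution} has the merit of keeping the argument self-contained within this article.
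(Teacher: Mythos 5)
Your proposal is correct and follows essentially the same route as the paper: the paper deduces \cref{cor_Palu_index} as a special case of \cref{cor_JS_index} and \cref{cor_Quillen_resolution_to_index}, whose proof is precisely your argument --- objects of $\X$ are projective for the relative structure (so the inherited structure $(\X,\BE^\X_R|_\X,\fs^\X_R|_\X)$ is split exact and $\X$ is extension-closed and closed under cocones of the relevant deflations), finite resolutions come from \cref{example:n-CT-resolution}, and then \cref{thm_Quillen_resolution} gives the isomorphism identifying $[C]^\X_R$ with $\ind_\X(C)$. The only cosmetic difference is that you verify the projectivity directly from \cref{def_relative1} in $\C^\X_R$, whereas the paper works in $\C^R_\N$ with $\N=\X^{\perp_0}$ and invokes \cref{lem_comparison_relative_structures} to identify the two structures.
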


The PPPP index isomorphism shows that the class $[C]^\X_R$ in the Grothendieck group $K_0(\C^\X_R)$ of an object $C\in\C$ can be interpreted as the Palu index of $C$ with respect to $\X$. This suggests that one can define an index using the relative extriangulated structure $\C^\X_R$, even without the $2$-cluster tilting assumption. 
This is what is done in \cite{JS23}.

\begin{definition}\label{def:JS-index}
\cite[Def.~3.5]{JS23} 
The 
\emph{J\o{}rgensen--Shah index of $C\in\C$ with respect to $\X$} is $\ind_\X(C)\deff[C]_R^\X\in K_0(\C_R^\X)$.
\end{definition}

An analogue of \cref{cor_Palu_index} for an $n$-cluster tilting subcategory $\X$ ($n\geq 2$) was given in \cite{JS23}. 
We call the following isomorphism the \emph{JS index isomorphism} since it generalizes \cite[Thm.~4.10]{JS23} by removing several restrictions on the ambient triangulated category $\C$. 
As we will see, it is a special case of \cref{cor_Quillen_resolution_to_index}.

\begin{corollary}[JS index isomorphism]
\label{cor_JS_index}
Suppose $\X$ is $n$-cluster tilting in $\C$. 
Then there is an isomorphism of abelian groups
\begin{align}
K_{0}(\C_R^\X) & \overset{\cong}{\longleftrightarrow}K_0^{\mathsf{sp}}(\X) \label{eq_JS_index}\\
[C]^\X_R & \longmapsto \sum_{i=0}^{n-1}(-1)^i[X_i]^{\mathsf{sp}}  \nonumber\\
[X]^\X_R & \longmapsfrom [X]^{\sp}, \nonumber
\end{align}
where 
$C\in\C$ admits an $\X$-resolution 
$
X_{n-1}\to \cdots\to X_1\to X_0\to C
$
in $\C^\X_R$ of length $n-1$.
\end{corollary}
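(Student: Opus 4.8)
The plan is to deduce \cref{cor_JS_index} directly from the Extriangulated Resolution Theorem (\cref{thm_Quillen_resolution}), applied to the skeletally small extriangulated category $\C_R^\X=(\C,\BE_R^\X,\fs_R^\X)$ (skeletally small because $\C$ is) together with the subcategory $\X$. By \cref{example:n-CT-resolution}, since $\X$ is $n$-cluster tilting, every object of $\C$ admits a finite $\X$-resolution in $\C_R^\X$ of length at most $n-1$. Thus the only hypotheses of \cref{thm_Quillen_resolution} left to verify are that $\X$ is extension-closed in $\C_R^\X$ and closed under taking cocones of $\fs_R^\X$-deflations.

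The key observation is that these two properties, together with the identification $K_0(\X,\BE_R^\X|_\X,\fs_R^\X|_\X)=K_0^{\sp}(\X)$, all follow from rigidity of $\X$. Indeed, suppose $A\xrightarrow{f}B\xrightarrow{g}C\overset{h}{\dra}$ is an $\fs_R^\X$-triangle with $C\in\X$. By \cref{def_relative1}, the $\BE$-extension $h\in\C(C,A[1])$ satisfies $h\circ x=0$ for every $x\colon X\to C$ with $X\in\X$; taking $x=\id_C$ forces $h=0$, so the triangle splits and $B\cong A\oplus C$. Taking $A,C\in\X$ and using that $\X$ is additive and closed under direct summands gives $B\in\X$, so $\X$ is extension-closed in $\C_R^\X$; taking $B,C\in\X$ shows $A$ is a direct summand of $B$, hence $A\in\X$, so $\X$ is closed under taking cocones of $\fs_R^\X$-deflations. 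The same argument shows $\BE_R^\X|_\X=0$ (for $A,C\in\X$ one has $\BE_R^\X(C,A)\subseteq\BE(C,A)=\C(C,A[1])$, and every element vanishes as above), so that $\fs_R^\X|_\X$-conflations in $\X$ are precisely the split exact sequences; therefore $K_0(\X,\BE_R^\X|_\X,\fs_R^\X|_\X)=K_0^{\sp}(\X)$ via $[X]\leftrightarrow[X]^{\sp}$.

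With these checks in hand, \cref{thm_Quillen_resolution} produces an isomorphism $K_0(\C_R^\X)\xrightarrow{\cong}K_0(\X,\BE_R^\X|_\X,\fs_R^\X|_\X)=K_0^{\sp}(\X)$ sending $[C]^\X_R$ to $\sum_{i\geq0}(-1)^i[X_i]^{\sp}$ for any finite $\X$-resolution $X_m\to\cdots\to X_0\to C$ of $C$ in $\C_R^\X$, with inverse $[X]^{\sp}\mapsto[X]^\X_R$. Since this alternating sum does not depend on the chosen resolution, we may take a resolution of length $n-1$ guaranteed by \cref{example:n-CT-resolution}, which yields the displayed formula $[C]^\X_R\mapsto\sum_{i=0}^{n-1}(-1)^i[X_i]^{\sp}$.

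The argument is essentially formal once \cref{thm_Quillen_resolution} is available, so the substance of the result is really contained in the Resolution Theorem; I do not anticipate a serious obstacle here. The only point requiring slight care is reconciling a resolution ``of length $n-1$'' with one ``of length at most $n-1$,'' which is harmless because \cref{thm_Quillen_resolution} makes the alternating sum independent of the resolution (one may always pad a shorter resolution with zero objects). It is worth emphasising that this route avoids the additional finiteness-type hypotheses on $\C$ imposed in \cite[Thm.~4.3]{JS23}, and that specialising to $n=2$ recovers the PPPP index isomorphism of \cref{cor_Palu_index} via \cref{def_index}.
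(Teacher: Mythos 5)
Your proposal is correct and is essentially the paper's own argument: the paper also deduces this from \cref{thm_Quillen_resolution} by observing that $\X$ consists precisely of the projectives of the relative structure (hence is extension-closed, closed under cocones of deflations, and inherits the split structure, so its Grothendieck group is $K_0^{\sp}(\X)$), the only cosmetic difference being that the paper phrases this via $\N=\X^{\perp_0}$ and \cref{lem_comparison_relative_structures} so as to state the more general \cref{cor_Quillen_resolution_to_index}, whereas you work directly in $\C_R^\X$. (A minor remark: your splitting argument uses only the definition of $\BE_R^\X$, not rigidity as such.)
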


Using \cref{lem_comparison_relative_structures} to see 
$\C^R_\N = \C_R^\X$ when $\N=\X^{\perp_0}$, it is then clear that isomorphisms \eqref{eq_PPPP} and \eqref{eq_JS_index} above are special cases of our next result.

\begin{corollary}
\label{cor_Quillen_resolution_to_index}
Put $\N\deff\X^{\perp_0}$. 
If each object $C\in\C^R_\N$ admits a finite $\X$-resolution in $\C^R_\N$, then 
there is 
an abelian group isomorphism
\begin{align}
K_{0}(\C^R_\N) & \overset{\cong}{\longleftrightarrow}K_0^{\mathsf{sp}}(\X) \label{eq_generalized_index1}\\
[C]_\N^R & \longmapsto \sum_{i=0}^{m}(-1)^i[X_i]^{\mathsf{sp}}  \nonumber\\
[X]_\N^R & \longmapsfrom [X]^{\sp}, \nonumber
\end{align} 
where 
$C$ admits an $\X$-resolution 
$
X_{m}\to \cdots\to X_1\to X_0\to C
$
in $\C^R_\N$ of length $m$.
\end{corollary}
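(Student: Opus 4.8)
The plan is to deduce \cref{cor_Quillen_resolution_to_index} directly from the Extriangulated Resolution Theorem (\cref{thm_Quillen_resolution}) applied to the relative extriangulated category $\C^R_\N$ together with the identification of $\X$ as its subcategory of $\BE^R_\N$-projectives. First I would check that $\X$ satisfies the hypotheses of \cref{thm_Quillen_resolution} inside $(\C,\BE^R_\N,\fs^R_\N)$. By \cref{lem_comparison_relative_structures} we have $\C^R_\N = \C^\X_R$, so $\BE^R_\N(C,A) = \BE^\X_R(C,A) = \Set{h\in\BE(C,A) | h\circ x = 0 \text{ for all } x\colon X\to C, X\in\X}$. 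From this it is immediate that every $\fs^R_\N$-extension $\delta\in\BE^R_\N(C,X)$ with $X\in\X$ is split: taking $x=\id_X$ forces any $\delta\in\BE^R_\N(X, -)$ to vanish, hence $\X$ consists of $\BE^R_\N$-projective objects (indeed it is exactly the subcategory of projectives, though we only need one inclusion). Consequently any $\fs^R_\N$-conflation $A\to B\to X$ with $X\in\X$ splits, and in particular any $\fs^R_\N$-deflation $B\to X$ with $B, X\in\X$ is a split epimorphism whose cocone is a direct summand of $B$; since $\X$ is closed under direct summands, $\X$ is closed under taking cocones of $\fs^R_\N$-deflations between objects of $\X$. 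Also $\X$ is extension-closed in $(\C,\BE,\fs)$ hence in the relative structure $\C^R_\N$ (being additive and closed under isomorphisms and under extensions). Note the hypothesis of the corollary is precisely that every object of $\C^R_\N$ admits a finite $\X$-resolution.

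Next I would invoke \cref{thm_Quillen_resolution} to obtain a group isomorphism $K_0(\C^R_\N) \xrightarrow{\cong} K_0(\X, \BE^R_\N|_\X, \fs^R_\N|_\X)$ sending $[C]^R_\N \mapsto \sum_{i=0}^m (-1)^i [X_i]$ for any finite $\X$-resolution of $C$, with inverse $[X]\mapsto [X]^R_\N$. The remaining point is to identify $K_0(\X, \BE^R_\N|_\X, \fs^R_\N|_\X)$ with $K_0^{\sp}(\X)$. This follows from the previous paragraph: since every $\fs^R_\N$-conflation with all terms (in particular the final term) in $\X$ splits, the $\fs^R_\N|_\X$-conflations in $\X$ are exactly the split exact sequences in $\X$. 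Hence by \cref{def:Groth-grou-of-extri-cat} the defining relations of $K_0(\X,\BE^R_\N|_\X,\fs^R_\N|_\X)$ coincide with those of $K_0^{\sp}(\X)$, giving a canonical isomorphism $K_0(\X, \BE^R_\N|_\X,\fs^R_\N|_\X) \cong K_0^{\sp}(\X)$ identifying $[X]$ with $[X]^{\sp}$. Composing with the isomorphism from \cref{thm_Quillen_resolution} yields \eqref{eq_generalized_index1}, with the stated formulas on generators.

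The main obstacle, such as it is, is purely bookkeeping: verifying that $\X$ really is closed under taking cocones of $\fs^R_\N$-deflations in the sense of \cref{def:closed-under-cones}, i.e.\ for \emph{every} $\fs^R_\N$-conflation $A\to B\to C$ with $B, C\in\X$ one has $A\in\X$. The key observation making this routine is that such a conflation is classified by an element of $\BE^R_\N(C, A)$, and restricting along $\id_C\colon C\to C$ (with $C\in\X$) shows this extension is zero, so the conflation splits and $A$ is a direct summand of $B\in\X$; closure under summands then gives $A\in\X$. One should double-check the edge cases (resolutions of length $0$, i.e.\ objects already in $\X$, and well-definedness of the resolution formula independent of the chosen resolution) but these are handled already inside the proof of \cref{thm_Quillen_resolution}, so nothing new is required. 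Finally, since \cref{lem_comparison_relative_structures} gives $\C^R_\N = \C^\X_R$ on the nose, \cref{cor_Palu_index} (via \cref{example:n-CT-resolution}, taking $m = n-1$) and \cref{cor_JS_index} drop out as the special cases where $\X$ is $n$-cluster tilting, as claimed in the discussion preceding the statement.
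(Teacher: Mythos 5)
Your proposal is correct and follows essentially the same route as the paper's proof: you show objects of $\X$ are $\BE^R_\N$-projective (via the description of $\BE^\X_R = \BE^R_\N$ from \cref{lem_comparison_relative_structures}), deduce from this and closure under direct summands that $\X$ is extension-closed and closed under cocones of $\fs^R_\N$-deflations with the inherited structure on $\X$ being split, and then apply \cref{thm_Quillen_resolution}. The only difference is that you spell out the ``one can check'' verifications explicitly, which is fine.
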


\begin{proof}
Since $\X$ is closed under direct summands, one can check that $\X$ coincides with the subcategory of $\BE^R_\N$-projectives in $\C^R_\N$. 
In particular, it is extension-closed and closed under taking cocones of $\mathfrak{s}_\N^R$-deflations in $\C^R_\N$.
Furthermore, the inherited extriangulated structure $(\X,\BE^R_\N|_{\X},\fs^R_\N|_{\X})$ is just the split exact structure on $\X$. Thus, 
$K_0(\X,\BE^R_\N|_{\X},\fs^R_\N|_{\X}) = K_0^{\sp}(\X)$. 
Hence, the assertion follows directly from \cref{thm_Quillen_resolution}.
\end{proof}

\subsection{Indices with respect to an extension-closed subcategory of a triangulated category}
\label{subsec_additive_formula}

We begin by proposing two indices. 

\begin{definition}\label{def:index-wrt-extn-closed}
Let $\N$ be an extension-closed subcategory of a skeletally small, idempotent complete, triangulated category $(\C,\BE,\fs)$. 
We call the class 
$[C]^R_\N\in K_{0}(\C^R_\N)$ 
(resp.\ $[C]^L_\N\in K_{0}(\C^L_\N)$) 
the \emph{right index} 
(resp.\ the \emph{left index}) \emph{with respect to $\N$} of $C\in\C$. 
\end{definition}

We make some comments on how our work in this subsection relates to \cite{JS23}.

\begin{remark}\label{rem:connection-to-JS}
Suppose $(\C,\mathbb{E},\mathfrak{s})$ is a skeletally small, idempotent complete, triangulated category with suspension $[1]$. In addition, 
let $\X$ be an additive, contravariantly finite, rigid subcategory $\X\sse\C$ that is closed under isomorphisms and direct summands. 
Putting $\N\deff\X^{\perp_0}$, we get 
$\C^R_\N = \C_R^\X$
by \cref{lem_comparison_relative_structures}. 
In particular, we see that $[C]_\N^R = [C]_R^\X = \ind_\X(C)$ is the J{\o}rgensen--Shah index of $C\in\C$. 
Thus, we can think of $[-]_\N^R$ as a generalization of J{\o}rgensen--Shah's index.
In the same way, the assignment $[-]^L_\N$ can also be regarded as an index. 
This motivates \cref{def:index-wrt-extn-closed}.

In \cite[Prop.~3.12]{JS23}, 
a homomorphism $\theta_{\X}\colon K_0(\mod\X)\to K_0(\C^\X_R)$
was produced that measures how far the index $\ind_\X$ is from being additive on triangles (see \cite[Thm.~3.14]{JS23}). 
We recall that $\theta_{\X}$ is given by 
$\theta_{\X}(\C(-,C)|_\X) = [C]^\X_R + [C[-1]]^\X_R$ for $C\in\C$.
We will define an analogue of $\theta_\X$ of the form $\theta^R_\N\colon K_0(\C/\N)\to K_0(\C^R_\N)$, which is more natural in our framework (see \cref{prop_add_formula3}). However, in the setup of \cite[\S3]{JS23}, one can pass between $\theta_\X$ and $\theta^R_\N$ using the equivalence $G\colon \C/\N \overset{\simeq}{\lra} \mod\X$ (see \cref{ex_BM_BBD}\ref{example:rigid}). 
Indeed, if 
\begin{equation}\label{eqn:G-star}
G_*\colon K_0(\C/\N) \overset{\cong}{\lra} K_0(\mod\X)
\end{equation}
is the induced isomorphism, then 
$\theta_\X G_* = \theta_\N^R$.

Just like in \cite{JS23}, we determine additivity formulae for our left and right indices in this subsection; see \cref{thm_JS_additivity}. 
Although the approach we take is similar to \cite[\S3]{JS23}, 
some arguments are simplified by utilizing the localization theory of extriangulated categories.
Furthermore, we also show that the image of $\theta^R_\N$ captures the kernel of the natural surjection 
$K_0(\C^R_\N)\onto K_0(\C)$ (see \cref{prop_comparison}).
\end{remark}

Note that our right index is a strict generalization of the J{\o}rgensen--Shah index since there are extension-closed subcategories of triangulated categories that do not arise as $\X^{\perp_{0}}$ for any contravariantly finite, rigid subcategory $\X$.

\begin{example}\label{example:extension-closed-not-rigid-perp}
Consider the quiver $1 \to 2$, its path algebra $\Lambda$ over a field $k$ and the bounded derived category $\C$ of finite-dimensional $\Lambda$-modules. The module category $\N\deff\mod\Lambda$ is extension-closed in $\C$, but a straightforward argument shows it cannot be of the form $\X^{\perp_{0}}$ for any rigid subcategory $\X\sse\C$.
\end{example}

\begin{setup}
\label{setup:additivity-formula}
In the remainder of \S\ref{subsec_additive_formula}, we suppose $(\C,\mathbb{E},\mathfrak{s})$  corresponds to a skeletally small, idempotent complete, triangulated category $\C$ with suspension $[1]$.
We also assume $\N$ is an extension-closed subcategory of $\C$, such that $\cone(\N,\N)=\C$. 
\end{setup}

We remind the reader that under \cref{setup:additivity-formula} all the results from \S\ref{subsub_localization} apply.
In particular, $\ol{\Sn}$ is a multiplicative system in $\overline{\C}\deff \C/[\N]$ (this is condition (MR$2$) from \cite{NOS22} and follows from \cref{thm_abel_loc1}\ref{N-thick}), 
the localization $\C/\N = \ol{\C}[\ol{\Sn}^{-1}] \cong \C[\Sn^{-1}]$ is an abelian category and  
the localization functor $Q\colon \C \to \C/\N$ 
is cohomological (see \cref{lem:GZ-localization-isomorphisms} and \cref{N-serre-local-is-abelian}), and $Q$ induces a right exact functor $Q\colon \C^R_\N \to \C/\N$ (see \cref{cor_abel_loc2}). 
In the sequel we focus on the right index and state, but do not prove, the corresponding assertions for the left index.

Our goal is to establish \cref{mainthm:additivity}. In order to define $\theta^R_\N$, we need two preliminary results. 
Note that the assumption $\cone(\N,\N)=\C$ is not needed to prove \cref{lem_add_formula1}.

\begin{lemma}
\label{lem_add_formula1}
(cf.\ \cite[Lem.~3.8]{JS23}) 
Let $B,C\in\C$ be objects with an isomorphism $Q(B)\cong Q(C)$ in $\C/\N$.
Then we have equalities
\[
[B]^R_\N+[B[-1]]^R_\N=[C]^R_\N+[C[-1]]^R_\N
\hspace{1cm}\text{and}\hspace{1cm}
[B]^L_\N+[B[1]]^L_\N=[C]^L_\N+[C[1]]^L_\N
\]
in $K_{0}(\C^R_\N)$ and $K_{0}(\C^L_\N)$, respectively.
\end{lemma}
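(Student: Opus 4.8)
The plan is to reduce the statement to the case where the isomorphism $Q(B)\cong Q(C)$ is realized on the nose by a single morphism in $\Sn$, and then to analyze how such a morphism interacts with the two relative structures. Recall from \cref{thm_abel_loc1}\ref{N-thick} that $\Sn=\R_{\ret}\circ\L=\R\circ\L_{\sec}$. Since the localization functor $Q$ inverts exactly the morphisms of $\Sn$ (saturation), an isomorphism $Q(B)\cong Q(C)$ in $\C/\N$ is (after passing through the ideal quotient description $\C/\N\cong\overline{\C}[\overline{\Sn}^{-1}]$ and a standard calculus-of-fractions argument) induced by a roof $B\xleftarrow{s}B'\xrightarrow{t}C$ with $s,t\in\Sn$. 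Using the factorizations of $\Sn$, it therefore suffices to prove the claim in the two elementary cases: (i) there is a morphism in $\L$ from one object to the other, and (ii) there is a morphism in $\R$ from one object to the other; the general case then follows by composing these equalities along the roof. (By symmetry of the statement in $B$ and $C$, case~(i) and case~(ii) are interchangeable after reversing roles.)

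For the key case, suppose $g\colon B\to C$ lies in $\R$, i.e.\ $g$ is an $\fs$-deflation (in the triangulated sense, simply a morphism) with $\cocone(g)=N\in\N$. In the triangulated category $\C$ this gives a triangle $N\xrightarrow{f}B\xrightarrow{g}C\xrightarrow{h}N[1]$. The heart of the matter is to check that, for the right relative structure, the rotations $N\to B\to C\xdashrightarrow{h}$ and (a shift of) $B\to C\to N[1]\xdashrightarrow{}$, as well as $C[-1]\to N\to B\xdashrightarrow{}$, are $\fs^R_\N$-conflations. This is where the definition of $\BE^R_\N$ in \cref{prop_relative2} is used: since $N,N[1]\in\N$ and, under \cref{setup:additivity-formula}, $\BE^R_\N$ is described by \cref{lem_specific_relative_structures} as those extensions that \emph{factor through an object of $\N$}, the connecting maps of these rotated triangles visibly factor through $N$ or $N[-1]\in\N$, hence lie in $\BE^R_\N$. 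Concretely, I would write down the three $\fs^R_\N$-triangles obtained by rotating $N\to B\to C\to N[1]$ and read off from the Grothendieck-group relations $[N]^R_\N-[B]^R_\N+[C]^R_\N=0$ and $[C[-1]]^R_\N-[N]^R_\N+[B]^R_\N=0$ (the latter from the rotation $C[-1]\to N\to B\to C$, whose connecting morphism $C[-1]\to N$ must be checked to lie in $\BE^R_\N$ — it equals $h[-1]$ composed appropriately and factors through $N[-1]\in\N$ after another rotation). Adding these two relations and cancelling $[N]^R_\N$ yields $[B]^R_\N+[C[-1]]^R_\N=[C]^R_\N+[B[-1]]^R_\N$, i.e.\ the desired equality for a morphism in $\R$.

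The case of a morphism in $\L$ is dual (an $\fs$-inflation with cone in $\N$, giving a triangle $B\to C\to N\to B[1]$ with $N\in\N$), and again both relevant rotations are $\fs^R_\N$-triangles because their connecting maps factor through an object of $\N$; this produces the same equality. Finally, I would assemble the general roof $B\xleftarrow{s}B'\xrightarrow{t}C$ by noting $[B]^R_\N+[B[-1]]^R_\N=[B']^R_\N+[B'[-1]]^R_\N=[C]^R_\N+[C[-1]]^R_\N$, using the elementary cases twice. The left-index statement is entirely dual, using $\BE^L_\N$ and \cref{thm_abel_loc1}\ref{lem_coresolving_subcat} instead. \textbf{The main obstacle} I anticipate is the bookkeeping needed to guarantee that \emph{every} connecting morphism arising from rotating $N\to B\to C\to N[1]$ genuinely lands in $\BE^R_\N$ (and not merely in $\BE^L_\N$): this requires carefully invoking the factorization description of \cref{lem_specific_relative_structures} for each rotation and checking that the relevant composite factors through $N$, $N[-1]$, or $N[1]$ as appropriate — the asymmetry between $\BE^R_\N$ and $\BE^L_\N$ means one cannot be cavalier about which rotations are admissible in which structure.
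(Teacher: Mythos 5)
Your overall architecture (reduce to a single morphism of $\Sn$ via a roof, prove the relation for that morphism, compose) matches the paper's, but the core step — extracting the two $\fs^R_\N$-relations from the triangle over a morphism of $\Sn$ — has genuine errors. First, the classes $\L$ and $\R$ of \cref{def_Sn_from_thick} are formed with respect to the relative structure $\C_\N$ (see \cref{thm_abel_loc1}\ref{N-thick}), so $g\in\R$ means $g$ is an $\fs_\N$-deflation with $\cocone(g)\in\N$, i.e.\ the extension $h$ of the triangle $N\xrightarrow{f}B\xrightarrow{g}C\xrightarrow{h}N[1]$ lies in $\BE_\N(C,N)$; it is exactly this membership (via \cref{lem_specific_relative_structures}) that makes $N\to B\to C$ an $\fs^R_\N$-conflation. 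Your reading of $\R$ as ``simply a morphism with cocone in $\N$'' drops this condition, and your justification that the connecting maps ``visibly factor through $N$ or $N[-1]\in\N$'' fails because $\N$ is only extension-closed, not closed under shifts, so $N[\pm1]\notin\N$ in general — this is precisely the obstacle you flag at the end, and your proposed resolution does not overcome it.

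Second, the rotations and the arithmetic are off. The rotation $C[-1]\to N\to B$ has extension $g$ itself (an element of $\BE(B,C[-1])$), which cannot factor through $\N$ except in degenerate cases (otherwise $Q(g)=0$), so the relation $[C[-1]]^R_\N-[N]^R_\N+[B]^R_\N=0$ is unjustified; and even granting both of your relations, their sum is $[C]^R_\N+[C[-1]]^R_\N=0$, which contains no $[B[-1]]^R_\N$ term and does not give the claimed equality. The correct pair, as in the paper, is the conflation $A\to B\to C'$ (extension $h$) together with its \emph{double} rotation $B[-1]\to C'[-1]\to A$ (extension $f\colon A\to B$), both of which are $\fs^R_\N$-conflations because \emph{both} $f$ and $h$ factor through $\N$; subtracting gives $[B]^R_\N+[B[-1]]^R_\N=[C']^R_\N+[C'[-1]]^R_\N$. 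The paper gets this for an arbitrary $s\in\Sn$ in one stroke from \cite[Lem.~2.5]{Oga22} (no case split into $\L$ and $\R$ needed); if you insist on the elementary cases, the same two facts hold there — for $g\in\R$ the map $f$ has domain $N\in\N$ so factors through $\N$ trivially, and $h\in\BE_\N\subseteq\BE^R_\N$ by definition of $\fs_\N$-deflation — but you must use these rotations and this justification, not the ones you wrote.
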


\begin{proof}
We show the first equation.
Since $\ol{\Sn}$ is a multiplicative system in $\ol{\C}$ (i.e.\ admits a calculus of left and right fractions in $\ol{\C}$), an isomorphism 
$\alpha\colon Q(B)\overset{\cong}{\lra}Q(C)$ 
is represented by a roof diagram 
$\begin{tikzcd}[column sep=0.5cm]
    B \arrow{r}{\overline{s}}& C' & C\arrow{l}[swap]{\overline{t}}
\end{tikzcd}$
in $\overline{\C}$ with $\ol{t}\in\ol{\Sn}$, i.e.\ $t\in\Sn$. 
Since $\alpha$ and $Q(t)$ are isomorphisms, we have that $Q(s) = Q(t)\alpha$ is an isomorphism. 
As the class $\Sn$ is saturated, we see $s\in\Sn$ too. 
So it is enough to show $[B]^R_\N+[B[-1]]^R_\N=[C']^R_\N+[C'[-1]]^R_\N$.
By \cite[Lem.~2.6]{Oga22b}, 
the morphism $s$ is part of a triangle
\begin{equation}\label{eqn:lemma-5.10-triangle}
A\overset{f}{\lra} B \overset{s}{\lra} C'\overset{h}{\lra} A[1] 
\end{equation}
with $f$ and $h$ factoring through objects in $\N$.
Since 
$A\overset{f}{\lra} B \overset{s}{\lra} C'$ is an $\mathfrak{s}^R_\N$-conflation, 
we get 
\begin{equation}\label{eqn:lemma-5.10-eq1}
    [A]^R_\N-[B]^R_\N+[C']^R_\N=0
\end{equation}
in $K_{0}(\C^R_\N)$.
In addition, we obtain the  $\mathfrak{s}^R_\N$-conflation 
$\begin{tikzcd}[column sep=1.4cm]
    B[-1] \arrow{r}{-s[-1]}& C'[-1] \arrow{r}{-h[-1]}& A 
\end{tikzcd}$
by rotating the triangle \eqref{eqn:lemma-5.10-triangle} twice, 
and hence 
\begin{equation}\label{eqn:lemma-5.10-eq2}
    [B[-1]]^R_\N-[C'[-1]]^R_\N+[A]^R_\N=0.
\end{equation}
The desired equality follows from combining \eqref{eqn:lemma-5.10-eq1} and \eqref{eqn:lemma-5.10-eq2}.
\end{proof}

\begin{lemma}\label{lem_add_formula2}
(cf.\ \cite[Lem.~3.11]{JS23}) 
Let 
$\begin{tikzcd}[column sep=0.5cm]
    0 \arrow{r}& A \arrow{r}{\alpha}& B \arrow{r}{\beta}& C \arrow{r}& 0
\end{tikzcd}$
be a short exact sequence in $\C/\N$.
Then we have equalities
\begin{eqnarray*}
\left( [A]^R_\N+[A[-1]]^R_\N \right) + \left( [C]^R_\N+[C[-1]]^R_\N \right) &=& [B]^R_\N+[B[-1]]^R_\N, \\
\left( [A]^L_\N+[A[1]]^L_\N \right) + \left( [C]^L_\N+[C[1]]^L_\N \right) &=& [B]^L_\N+[B[1]]^L_\N
\end{eqnarray*}
in $K_{0}(\C^R_\N)$ and $K_{0}(\C^L_\N)$, respectively.
\end{lemma}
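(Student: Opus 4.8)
The strategy is to lift the short exact sequence $0 \to A \xto{\alpha} B \xto{\beta} C \to 0$ in $\C/\N$ to a configuration in $\C$ that is amenable to the two additivity-type identities already available, namely \cref{lem_add_formula1} (which handles isomorphisms in $\C/\N$) and the defining relations of $K_0(\C^R_\N)$ coming from $\fs^R_\N$-conflations. The key point is that, by \cref{cor_abel_loc2}, the localization functor $Q\colon \C^R_\N \to \C/\N$ is right exact, and by \cref{lem_inf}(2) every $\widetilde{\fs_\N}$-deflation is, up to isomorphisms in $\C/\N$, of the form $Q(g)$ for an $\fs^R_\N$-deflation $g$ in $\C$. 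Since $\beta$ is an epimorphism in the abelian category $\C/\N$, it is a $\widetilde{\fs_\N}$-deflation, so we may write $\beta = \gamma_1 Q(g) \gamma_2$ for an $\fs^R_\N$-deflation $g\colon B' \to C'$ in $\C$ and isomorphisms $\gamma_i$ in $\C/\N$, with $Q(B') \cong B$ and $Q(C') \cong C$. Let $A' \to B' \xto{g} C'$ be the corresponding $\fs^R_\N$-conflation in $\C$, and let $A' \to B' \to C' \to A'[1]$ be the underlying triangle in $\C$.

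Next I would identify $Q(A')$. Applying the cohomological functor $Q\colon \C \to \C/\N$ to the triangle gives a long exact sequence in $\C/\N$; applying instead the right exact functor $Q\colon \C^R_\N \to \C/\N$ to the conflation $A' \to B' \xto{g} C'$ gives that $Q(A') \to Q(B') \xto{Q(g)} Q(C') \to 0$ is exact. Comparing with $0 \to A \to B \xto{\beta} C \to 0$ and using $Q(B')\cong B$, $Q(C')\cong C$ compatibly with $Q(g)$ and $\beta$, we get an epimorphism $Q(A') \onto A = \ker\beta$; its kernel $K$ fits into $0 \to K \to Q(A') \to A \to 0$ in $\C/\N$. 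The subtle point — and what I expect to be the main obstacle — is to pin down $K$. Here one uses that $A' = \cocone(g)$ and the triangle, together with the fact that the ``extra'' part of $Q(A')$ beyond $A$ comes precisely from $Q(C'[-1]) \to Q(A')$ in the long exact sequence; one shows $K \cong \Im\bigl(Q(C'[-1]) \to Q(A')\bigr)$, and this image is a quotient of $Q(C'[-1])$, hence admits its own short exact sequence relating it to $C[-1] = Q(C'[-1])$ suitably. Controlling these intermediate objects may require shrinking $C'$ (e.g.\ replacing the chosen lift by one for which the connecting map is better behaved, using $\cone(\N,\N)=\C$ to arrange approximation triangles) and an induction on the ``complexity'' of the exact sequence, or alternatively a direct dévissage: reduce first to the case where $\alpha$ is already a genuine morphism in $\C$ realized by an $\fs^R_\N$-inflation, via \cref{lem_inf}(1) and \cref{lem_add_formula1}.

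Granting the identification, the conclusion is bookkeeping. From the $\fs^R_\N$-conflation $A' \to B' \to C'$ we get $[A']^R_\N - [B']^R_\N + [C']^R_\N = 0$ in $K_0(\C^R_\N)$, and rotating the underlying triangle twice gives the $\fs^R_\N$-conflation $B'[-1] \to C'[-1] \to A'$ (as in the proof of \cref{lem_add_formula1}), hence $[B'[-1]]^R_\N - [C'[-1]]^R_\N + [A']^R_\N = 0$. Adding these yields
\[
[B']^R_\N + [B'[-1]]^R_\N = \bigl([A']^R_\N + [A'[-1]]^R_\N\bigr) + \bigl([C']^R_\N + [C'[-1]]^R_\N\bigr) - \bigl([A'[-1]]^R_\N + [C'[-1]]^R_\N - [C'[-1]]^R_\N\bigr),
\]
so after cancelling one combines this with \cref{lem_add_formula1} applied to the isomorphisms $Q(B')\cong B$, $Q(C')\cong C$ (giving $[B']^R_\N + [B'[-1]]^R_\N = [B]^R_\N + [B[-1]]^R_\N$ and similarly for $C$) and to the short exact sequence $0 \to K \to Q(A') \to A \to 0$ together with the identification of $K$; an induction/dévissage on such short exact sequences (the base case being $K=0$, i.e.\ $Q(A')\cong A$, which is immediate) then delivers $[A']^R_\N + [A'[-1]]^R_\N = [A]^R_\N + [A[-1]]^R_\N$. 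Substituting gives the first displayed equality. The second, for the left index, follows by the dual argument using the left exact functor $Q\colon \C^L_\N \to \C/\N$ of \cref{cor_abel_loc2} and the dual of \cref{lem_inf}, so I would simply state it.
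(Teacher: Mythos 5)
Your overall strategy (lift the sequence through the localization via \cref{lem_inf}, then combine $K_0$-relations with \cref{lem_add_formula1}) is the paper's, but you weaken the lift in a way that breaks the argument. \cref{lem_inf} is a statement about the localization of $\C_\N=(\C,\BE_\N,\fs_\N)$, so the epimorphism $\beta$ lifts to an $\fs_\N$-deflation $g$, i.e.\ the extension $h'$ of the conflation $A'\to B'\xto{g} C'$ lies in $\BE_\N=\BE^L_\N\cap\BE^R_\N$; you only retain that $g$ is an $\fs^R_\N$-deflation, i.e.\ $h'\in[\N]$. This loss is fatal at exactly the two places you flag or gloss over. First, with only the right exact functor $Q\colon\C^R_\N\to\C/\N$ you get an epimorphism $Q(A')\onto A$ with an uncontrolled kernel $K$, and your proposed d\'evissage for $K$ is not carried out and is essentially circular: \cref{lem_add_formula1} only handles isomorphisms in $\C/\N$, so absorbing a nontrivial extension $0\to K\to Q(A')\to A\to 0$ is precisely the kind of statement \cref{lem_add_formula2} itself is meant to provide. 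With the full strength of \cref{lem_inf} this issue evaporates: $Q$ is exact on $\C_\N$ (\cref{Thm_Mult_Loc}, \cref{thm_abel_loc1}), so $Q(A')\cong A$ outright.

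Second, the bookkeeping step is not correct as written. What is needed is the relation $[A'[-1]]^R_\N-[B'[-1]]^R_\N+[C'[-1]]^R_\N=0$, i.e.\ that the \emph{shifted} triangle $A'[-1]\to B'[-1]\to C'[-1]$ (with extension $\pm h'[-1]$) is an $\fs^R_\N$-conflation; by \cref{lem_specific_relative_structures} this requires $h'[-1]\in[\N]$, i.e.\ $h'\in\BE^L_\N$, which your $\fs^R_\N$-lift does not supply. The rotation $B'[-1]\to C'[-1]\to A'$ you invoke instead is also not an $\fs^R_\N$-conflation in general (its extension is $f'$, and nothing forces $f'\in[\N]$ here; in the proof of \cref{lem_add_formula1} this was available only because the triangle came from a morphism in $\Sn$), and even granting it, the two relations $[A']-[B']+[C']=0$ and $[B'[-1]]-[C'[-1]]+[A']=0$ do not combine to give $([A']+[A'[-1]])+([C']+[C'[-1]])=[B']+[B'[-1]]$, since no $[A'[-1]]$ term ever appears. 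The repair is simply to keep the $\fs_\N$-lift: then $h'$ and $h'[-1]$ both factor through $\N$, so the conflation and its shift are both $\fs^R_\N$-conflations, $Q(A')\cong A$, and the two relations together with \cref{lem_add_formula1} applied to $Q(A')\cong A$, $Q(B')\cong B$, $Q(C')\cong C$ give the claim, which is the paper's proof.
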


\begin{proof}
We show the first equation.
By \cite[Lem.~3.32]{NOS22}, 
there exists an $\mathfrak{s}_\N$-triangle 
\begin{equation}\label{eqn:lemma-5.11-Sn-triangle}
\begin{tikzcd}
    A' \arrow{r}{f'}& B' \arrow{r}{g'}& C' \arrow[dashed]{r}{h'}&{}
\end{tikzcd}
\end{equation}
and an isomorphism 
\begin{equation*}\label{eqn:lemma-5.11-ses}
\begin{tikzcd}
    0 \arrow{r}{}& QA' \arrow{r}{Qf'}\arrow{d}{\cong}& QB' \arrow{r}{Qg'}\arrow{d}{\cong}& QC' \arrow{r}{}\arrow{d}{\cong}& 0 \\
    0 \arrow{r}{}& QA \arrow{r}{\alpha}& QB \arrow{r}{\beta}& QC \arrow{r}{}& 0
\end{tikzcd}
\end{equation*}
of short exact sequences
in $\C/\N$.
Since \eqref{eqn:lemma-5.11-Sn-triangle} is an $\fs_\N$-triangle, we know that 
$h'[-1]$ and $h'$ 
both factor through $\N$ by \cref{lem_specific_relative_structures}. 
Thus, \eqref{eqn:lemma-5.11-Sn-triangle} and 
$\begin{tikzcd}[cramped, column sep=1.3cm]
    A'[-1] \arrow{r}{}&[-0.8cm] B'[-1] \arrow{r}{}&[-0.8cm] C'[-1] \arrow[dashed]{r}{-h'[-1]}&{}
\end{tikzcd}$
are both $\mathfrak{s}^R_\N$-triangles.
So 
$
[A']_\N^R-[B']_\N^R+[C']_\N^R=0=-([A'[-1]]_\N^R-[B'[-1]]_\N^R+[C'[-1]]_\N^R),
$
and combining with \cref{lem_add_formula1} yields the desired equation.
\end{proof}

We are now in a position to define $\theta^R_\N$. We remark that instances of this homomorphism have appeared in \cite[\S2.1]{Pal08}, \cite[\S4]{Jor21} and \cite[\S2]{Fed21} before.

\begin{proposition}
\label{prop_add_formula3}
(cf.\ \cite[Prop.~3.12]{JS23}) 
The following are well-defined group homomorphisms.
\begin{eqnarray*}
&&\theta^R_\N\colon K_0(\C/\N)\lra K_{0}(\C^R_\N) 
\hspace{0.5cm}\text{ given by }\hspace{0.5cm}
    [Q(C)]\longmapsto [C]_\N^R+[C[-1]]_\N^R\\
&&\theta^L_\N\colon K_0(\C/\N)\lra K_{0}(\C_\N^L) 
\hspace{0.5cm}\text{ given by }\hspace{0.5cm}
    [Q(C)]\longmapsto [C]_\N^L+[C[1]]_\N^L
\end{eqnarray*}
\end{proposition}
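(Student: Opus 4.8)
The plan is to invoke the universal property of the Grothendieck group of the abelian category $\C/\N$ (which is skeletally small since $\C$ is). Recall that $K_0(\C/\N)$ is the free abelian group on the set $\isoclass(\C/\N)$ of isomorphism classes of objects, modulo the subgroup generated by the elements $[M']-[M]+[M'']$ arising from short exact sequences $0\to M'\to M\to M''\to 0$ in $\C/\N$. Since $Q$ is essentially surjective (indeed it is the identity on objects in the construction of \eqref{eqn:localization-of-C}), every object of $\C/\N$ is isomorphic to $Q(C)$ for some $C\in\C$, so it suffices to specify the value of $\theta^R_\N$ on classes of this form and then check the two conditions needed to apply the universal property: that the prescribed value depends only on the isomorphism class of $Q(C)$, and that the defining relations of $K_0(\C/\N)$ are sent to $0$.

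First I would check that the assignment $C\mapsto [C]^R_\N+[C[-1]]^R_\N$ is constant on isomorphism classes in $\C/\N$: if $Q(B)\cong Q(C)$ in $\C/\N$, then $[B]^R_\N+[B[-1]]^R_\N=[C]^R_\N+[C[-1]]^R_\N$ in $K_0(\C^R_\N)$ by \cref{lem_add_formula1}. Consequently there is a well-defined homomorphism from the free abelian group on $\isoclass(\C/\N)$ to $K_0(\C^R_\N)$ sending $[Q(C)]$ to $[C]^R_\N+[C[-1]]^R_\N$. Next, given a short exact sequence $0\to A\to B\to C\to 0$ in $\C/\N$ (with $A,B,C\in\C$, as objects of $\C/\N$ are objects of $\C$), \cref{lem_add_formula2} gives precisely
\[
\bigl([A]^R_\N+[A[-1]]^R_\N\bigr)+\bigl([C]^R_\N+[C[-1]]^R_\N\bigr)=[B]^R_\N+[B[-1]]^R_\N,
\]
so the generating relation $[A]-[B]+[C]$ of $K_0(\C/\N)$ is sent to $0$. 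Hence the homomorphism factors through $K_0(\C/\N)$, and since it is induced on a quotient of a free abelian group by a function respecting the presentation, it is automatically a group homomorphism; this yields the desired $\theta^R_\N$. The statement for $\theta^L_\N$ follows by the dual argument, using the left-index versions of \cref{lem_add_formula1} and \cref{lem_add_formula2} (and that $Q\colon\C^L_\N\to\C/\N$ is left exact by \cref{cor_abel_loc2}).

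The substantive content — already isolated in the two preceding lemmas — is the interplay between the suspension shift and the relative structure $\BE^R_\N$: for a triangle $A\to B\to C\to A[1]$ in $\C$ whose connecting morphism factors through $\N$, both $A\to B\to C$ and (after rotating twice) $B[-1]\to C[-1]\to A$ become $\fs^R_\N$-conflations, and adding the two resulting relations in $K_0(\C^R_\N)$ is exactly what produces the symmetric pattern $[-]^R_\N+[-[-1]]^R_\N$. With \cref{lem_add_formula1,lem_add_formula2} in hand, everything that remains is the formal bookkeeping above, so I do not expect any genuine obstacle at this stage.
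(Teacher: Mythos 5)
Your proposal is correct and follows essentially the same route as the paper: well-definedness on isomorphism classes via \cref{lem_add_formula1}, additivity on short exact sequences in $\C/\N$ via \cref{lem_add_formula2}, and then the universal property of $K_0(\C/\N)$, with the left index handled dually. The extra remarks about essential surjectivity of $Q$ and the role of the double rotation are accurate but already built into the cited lemmas, so nothing further is needed.
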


\begin{proof}
We only check the well-definedness of $\theta^R_\N$. 
Let $\isoclass(\C/\N)$ denote the set of isoclasses of objects in $\C/\N$. 
By \cref{lem_add_formula1}, we get a well-defined map
$\isoclass(\C/\N) \to K_{0}(\C^R_\N)$ given by 
$[Q(C)] \mapsto [C]_\N^R+[C[-1]]_\N^R$.
Due to \cref{lem_add_formula2}, this map is additive on short exact sequences in $\C/\N$, 
inducing a well-defined homomorphism 
$\theta^R_\N\colon K_0(\C/\N) \to K_{0}(\C^R_\N)$ as claimed.
\end{proof}

The additivity formulae with error terms for the left and right indices can now be established. 

\begin{theorem}
\label{thm_JS_additivity}
(cf.\ \cite[Thm.~3.14]{JS23}) 
Let $A\overset{f}{\lra}B\overset{g}{\lra}C\overset{h}{\lra} A[1]$ be any triangle in $(\C,\mathbb{E},\mathfrak{s})$.
Then we have equalities
\[
[A]_\N^R-[B]_\N^R+[C]_\N^R=\theta^R_\N\bigl(\Im Q(h)\bigr)
\hspace{1cm}\text{and}\hspace{1cm}
[A]_\N^L-[B]_\N^L+[C]_\N^L=\theta^L_\N\bigl(\Im Q(h[-1])\bigr)
\]
in $K_{0}(\C^R_\N)$ and $K_{0}(\C^L_\N)$, respectively.
\end{theorem}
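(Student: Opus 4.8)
The plan is to reduce the additivity formula for an arbitrary triangle to the special case where the connecting morphism $h$ factors through $\N$ (handled by Lemma~\ref{lem_add_formula1}), by splicing in the right $\X$-approximation-style factorization of $h$ through its image in the abelian localization $\C/\N$. The key observation is that $\theta^R_\N(\Im Q(h))$ is, by definition of $\theta^R_\N$, of the form $[K]^R_\N + [K[-1]]^R_\N$ for any object $K\in\C$ with $Q(K)\cong \Im Q(h)$ in $\C/\N$. So I need to produce such a $K$ and organize the triangles so that the classes match up.

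First I would factor $Q(h)\colon Q(C)\to Q(A[1])$ in the abelian category $\C/\N$ through its image as $Q(C)\overset{e}{\onto} I \overset{m}{\rtail} Q(A[1])$, where $I = \Im Q(h)$. Since $Q$ is cohomological (\cref{N-serre-local-is-abelian}) and the localization functor is, up to the calculus of fractions, surjective on objects, I may choose an object $K\in\C$ with $Q(K)\cong I$; without loss of generality take $K$ so that $Q(h)$ factors as $Q(C)\to Q(K)\to Q(A[1])$. Rotating the given triangle to $C[-1]\overset{-h[-1]}{\lra} A \overset{f}{\lra} B \overset{g}{\lra} C$ and applying the cohomological functor $Q$ gives an exact sequence in $\C/\N$, from which one extracts two short exact sequences: $0\to Q(\mathrm{coker})\to Q(C)\to \Im Q(g)\to 0$-type pieces. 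The cleaner route: I would instead directly build, using \cref{lem:NP19-cor-3-16} (weak pullback/pushout) and \cref{lem_inf}, $\fs^R_\N$-conflations relating $A,B,C$ to objects whose $Q$-images realize the image/kernel/cokernel decomposition of $Q(g)$ and $Q(h)$, and then read off the relation in $K_0(\C^R_\N)$.

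Concretely, here is the order of steps I would carry out. (1) Use \cref{lem_inf} and the exactness of $Q\colon\C^R_\N\to\C/\N$ to realize the epi $Q(C)\onto \Im Q(h) = \Im Q(h)$ and the mono $\Im Q(g)\rtail Q(C)$ by $\fs^R_\N$-conflations in $\C$; this uses that $\cone(\N,\N)=\C$ so that \cref{lem_specific_relative_structures} applies and that $\N$ is Serre in $\C_\N$. (2) Note that in $\C/\N$, the given triangle yields a six-term exact sequence $Q(A)\to Q(B)\to Q(C)\to Q(A[1])\to Q(B[1])\to Q(C[1])$; break it at $Q(C)\to Q(A[1])$ into $0\to \mathrm{Im}\,Q(g) \to Q(C)\to \Im Q(h)\to 0$. (3) Apply \cref{lem_add_formula2} to this short exact sequence: $\bigl([\,\Im Q(g)\,]^R_\N + [\,\Im Q(g)[-1]\,]^R_\N\bigr) + \bigl([K]^R_\N + [K[-1]]^R_\N\bigr) = [C]^R_\N + [C[-1]]^R_\N$ where I write $[\,I\,]^R_\N$ loosely for $[K']^R_\N$ with $Q(K')\cong I$, justified by \cref{lem_add_formula1}. (4) Similarly handle $0\to \Im Q(f) \to Q(B) \to \Im Q(g)\to 0$ and observe $\Im Q(f)$ is a quotient of $Q(A)$ with kernel $\Im Q(h[-1])$ sitting in $0\to \Im Q(h[-1])\to Q(A)\to \Im Q(f)\to 0$. (5) Add the resulting identities, and crucially use that the "periodicity" terms telescope: the $[-1]$-shifted image classes that appear from the $B$-relation cancel against those from the $A$- and $C$-relations, because $Q(h[-1])$ and $Q(h)$ are related by $Q(h[-1]) = Q(h)[-1]$ and the shift $[-1]$ is exact on $\C$ hence sends the s.e.s. for $\Im Q(h)$ to that for $\Im Q(h)[-1] = \Im Q(h[-1])$. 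After cancellation what survives is exactly $[A]^R_\N - [B]^R_\N + [C]^R_\N = [K]^R_\N + [K[-1]]^R_\N = \theta^R_\N(\Im Q(h))$.

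\textbf{The main obstacle} I anticipate is bookkeeping the image/kernel/cokernel objects and their shifts coherently: one must be careful that "the object of $\C$ realizing $\Im Q(h)$" is only well-defined up to $Q$-isomorphism, so every manipulation must pass through \cref{lem_add_formula1} to legitimize replacing one such representative by another, and one must verify that $[-1]$ applied to a chosen representative of $\Im Q(h)$ is a legitimate representative of $\Im Q(h[-1])$ — this is where cohomologicality of $Q$ and exactness of $[-1]$ on the triangulated category $\C$ (and the fact that $Q$ commutes with $[1]$ in the relevant sense, or at least that $Q(X[1])$ and $Q(X)[1]$-type identifications are available via \cref{cor_abel_loc2}) get used. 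A secondary subtlety is ensuring that the short exact sequences extracted from the long exact sequence in $\C/\N$ really are short exact (not just exact complexes), which follows since $\C/\N$ is abelian. Once the $\fs^R_\N$-conflations in $\C$ are in place, steps (3)--(5) are formal manipulations in the free abelian group $K_0(\C^R_\N)$ modulo the conflation relations. The left-index statement is entirely dual, replacing $[-1]$ by $[1]$ and using \cref{lem_inf}(1) together with the dual halves of \cref{lem_specific_relative_structures}, \cref{lem_add_formula1} and \cref{lem_add_formula2}; I would state it without proof as the theorem does.
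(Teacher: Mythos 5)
There is a genuine gap in the route you actually carry out. Steps (2)--(4) are fine: since $Q$ is cohomological and $\C/\N$ is abelian, the triangle yields short exact sequences $0\to\Im Q(g)\to Q(C)\to\Im Q(h)\to 0$, $0\to\Im Q(f)\to Q(B)\to\Im Q(g)\to 0$ and $0\to\Im Q(h[-1])\to Q(A)\to\Im Q(f)\to 0$, and after choosing $K,L,M,J\in\C$ with $Q(K)\cong\Im Q(h)$, $Q(L)\cong\Im Q(g)$, $Q(M)\cong\Im Q(f)$, $Q(J)\cong\Im Q(h[-1])$, \cref{lem_add_formula2} applies to each. But every identity coming out of \cref{lem_add_formula1} or \cref{lem_add_formula2} involves only the symmetrized classes $[X]^R_\N+[X[-1]]^R_\N$. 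Taking the alternating sum of the three identities, the $L$- and $M$-terms cancel and what survives is
\[
\bigl([A]^R_\N-[B]^R_\N+[C]^R_\N\bigr)+\bigl([A[-1]]^R_\N-[B[-1]]^R_\N+[C[-1]]^R_\N\bigr)
=\theta^R_\N\bigl(\Im Q(h)\bigr)+\theta^R_\N\bigl(\Im Q(h[-1])\bigr),
\]
i.e.\ the sum of the desired formula for the given triangle and for its shift $A[-1]\to B[-1]\to C[-1]\to A$; no telescoping splits this sum into its two halves. Moreover, the cancellation you invoke in step (5) rests on ``$\Im Q(h)[-1]=\Im Q(h[-1])$'' and ``$Q(h[-1])=Q(h)[-1]$'', which are not meaningful: the suspension does not descend to $\C/\N$, because $\N$ need not be closed under shifts, so $\Sn$ is not shift-stable; concretely, $Q(K)\cong Q(K')$ does not imply $Q(K[-1])\cong Q(K'[-1])$ (take $K\in\N$ with $K[-1]\notin\N$ and $K'=0$). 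Even granting that identification, the computation above still produces only the symmetrized identity, so the argument cannot conclude.

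What is missing is a construction of honest $\fs^R_\N$-conflations in $\C$ itself (not relations in $\C/\N$) tying $A,B,C$ to a single representative of $\Im Q(h)$; this is where the hypothesis $\cone(\N,\N)=\C$ must enter, and it is how the paper proceeds. One chooses a triangle $N_1\to N_0\overset{b}{\lra} B[1]\to N_1[1]$ with $N_0,N_1\in\N$ and forms, via the octahedral axiom, the homotopy pullback $X$ of $-f[1]$ along $b$. The resulting diagram factors $h$ as $h_2h_1$ with $Q(h_1)$ epic and $Q(h_2)$ monic, so $Q(X)\cong\Im Q(h)$, and it yields the three $\fs^R_\N$-conflations $N_0[-1]\to C\to X$, $N_1[-1]\to X[-1]\to A$ and $N_1[-1]\to N_0[-1]\to B$; these give $[A]^R_\N-[B]^R_\N+[C]^R_\N=[X]^R_\N+[X[-1]]^R_\N$ outright, and \cref{prop_add_formula3} identifies the right-hand side with $\theta^R_\N(\Im Q(h))$. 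Your opening remark about realizing the epi--mono factorization by $\fs^R_\N$-conflations via \cref{lem_inf} and weak pullbacks gestures in this direction, but the steps you actually carry out never produce such conflations, and without them the approach stalls at the symmetrized identity.
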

\begin{proof}
Since $\cone(\N,\N)=\C$, there is a triangle 
$N_1\overset{a}{\lra} N_{0}\overset{b}{\lra} B[1]\overset{c}{\lra} N_1[1]$ in $\C$ with $N_i\in\N$.
Taking a homotopy pullback of $-f[1]$ along $b$ by the octahedral axiom, we have the commutative diagram
\begin{equation}\label{eqn:thm-5.13-big-diagram}
\begin{tikzcd}
    &&N_1\arrow[equals]{r}{}\arrow{d}{}&N_1\arrow{d}{a}\\
    N_{0}[-1]\arrow{r}{}\arrow{d}{}&C\arrow{r}{h_1}\arrow[equals]{d}{}&X\arrow{r}{d}\arrow{d}[swap]{h_2}\wPB{dr}&N_{0}\arrow{d}{b}\\
    B\arrow{r}{g}&C\arrow{r}{h}&A[1]\arrow{r}[swap]{-f[1]}\arrow{d}{}&B[1]\arrow{d}{c}\\
    &&N_1[1]\arrow[equals]{r}{}&N_1[1]
\end{tikzcd}
\end{equation}
of triangles in $\C$. 
From \eqref{eqn:thm-5.13-big-diagram}, and by rotating if necessary, we can extract the three $\mathfrak{s}^R_\N$-triangles:
\begin{align*}
    \begin{tikzcd}[ampersand replacement=\&, column sep=1.3cm]N_{0}[-1] \arrow{r}\& C \arrow{r}\& X \arrow[dashed]{r}{d}\&{,}\end{tikzcd}\\
    \begin{tikzcd}[ampersand replacement=\&, column sep=1.3cm]N_{1}[-1] \arrow{r}\& X[-1] \arrow{r}\& A \arrow[dashed]{r}{c[-1]f}\&{,}\end{tikzcd}\\
    \begin{tikzcd}[ampersand replacement=\&, column sep=1.3cm]N_{1}[-1] \arrow{r}\& N_{0}[-1] \arrow{r}\& B \arrow[dashed]{r}{-c[-1]}\&{.}\end{tikzcd}
\end{align*}
These imply the equality $[A]_\N^R-[B]_\N^R+[C]_\N^R=[X]_\N^R+[X[-1]]_\N^R$ in $K_{0}(\C^R_\N)$. 
As $h = h_2 h_1$, 
it follows from \cite[Lem.~2.6]{Oga22b} that $Q(h)$ 
is the epimorphism $Q(h_1)$ followed by the monomorphism $Q(h_2)$.
Thus, we have $Q(X)\cong \Im Q(h_1)\cong \Im Q(h)$.
\cref{prop_add_formula3} ensures $\theta^R_\N(\Im Q(h))=[X]_\N^R+[X[-1]]_\N^R$ and we are done.
\end{proof}

In our next result, which implies \cref{mainprop:E}, we produce right exact sequences connecting the Grothendieck groups of $\C/\N$, $\C^R_\N$ and $\C$ using 
the canonical surjections
\begin{eqnarray*}
&&\pi^R_{\N}\colon K_{0}(\C^R_\N)\onto K_0(\C)\hspace{0.5cm}\text{ given by }\hspace{0.5cm}[C]^R_\N\mapsto [C],\\
&&\pi^L_{\N}\colon K_{0}(\C^L_\N)\onto K_0(\C)\hspace{0.5cm}\text{ given by }\hspace{0.5cm}[C]^L_\N\mapsto [C].
\end{eqnarray*}

\begin{proposition}
\label{prop_comparison}
We have the following right exact sequences of Grothendieck groups.
\begin{eqnarray}
&&  \begin{tikzcd}[ampersand replacement=\&]
        K_0(\C/\N)
            \arrow{r}{\theta^R_\N}
        \& K_{0}(\C^R_\N)
            \arrow{r}{\pi^R_{\N}}
        \& K_0(\C)
            \arrow{r} 
        \& 0
    \end{tikzcd} \label{eq_comparison2}\\
&&  \begin{tikzcd}[ampersand replacement=\&]
        K_0(\C/\N)
            \arrow{r}{\theta_\N^L}
        \& K_{0}(\C^L_\N)
            \arrow{r}{\pi^L_{\N}}
        \& K_0(\C)
            \arrow{r} 
        \& 0
    \end{tikzcd} \label{eq_comparison1}
\end{eqnarray}
\end{proposition}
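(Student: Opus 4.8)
The plan is to realize $K_0(\C)$ as a quotient of $K_0(\C^R_\N)$ and to identify the kernel of the quotient map using \cref{thm_JS_additivity}. I will treat only the sequence \eqref{eq_comparison2}; the sequence \eqref{eq_comparison1} follows by the dual argument, replacing $\theta^R_\N$ and shifts by $[-1]$ with $\theta^L_\N$ and shifts by $[1]$ throughout.

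First I would note that every $\fs^R_\N$-conflation $A\to B\to C$ underlies a distinguished triangle $A\to B\to C\to A[1]$, since $\fs^R_\N$ is a restriction of the extriangulation $\fs$ that corresponds to the triangulated structure. Hence every defining relation of $K_0(\C^R_\N)$ is also a defining relation of $K_0(\C)$, so the identity on objects induces a well-defined surjective homomorphism $\pi^R_\N\colon K_0(\C^R_\N)\onto K_0(\C)$, and moreover $\ker\pi^R_\N$ is the subgroup generated by all classes $[A]^R_\N-[B]^R_\N+[C]^R_\N$ with $A\xrightarrow{f}B\xrightarrow{g}C\xrightarrow{h}A[1]$ a distinguished triangle in $\C$. (This uses only the elementary fact that if $G=F/R_1$ and $G'=F/R_2$ with $R_1\subseteq R_2$, then the kernel of the induced surjection $G\onto G'$ is the image of $R_2$ in $G$, which is generated by the images of the generators of $R_2$.)

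Now \cref{thm_JS_additivity} says precisely that each such generator satisfies $[A]^R_\N-[B]^R_\N+[C]^R_\N=\theta^R_\N\bigl(\Im Q(h)\bigr)\in\operatorname{im}\theta^R_\N$, so $\ker\pi^R_\N\subseteq\operatorname{im}\theta^R_\N$. For the reverse inclusion I would check $\pi^R_\N\circ\theta^R_\N=0$: by \cref{prop_add_formula3} we have $\theta^R_\N\bigl([Q(C)]\bigr)=[C]^R_\N+[C[-1]]^R_\N$, whence $\pi^R_\N\bigl(\theta^R_\N([Q(C)])\bigr)=[C]+[C[-1]]$, and this vanishes in $K_0(\C)$ because suspension acts as multiplication by $-1$ on the Grothendieck group of a triangulated category (the relation coming from the distinguished triangle $C[-1]\to 0\to C\xrightarrow{\cong}C$). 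Since $Q$ is essentially surjective, the classes $[Q(C)]$ generate $K_0(\C/\N)$, so $\pi^R_\N\circ\theta^R_\N=0$ and hence $\operatorname{im}\theta^R_\N\subseteq\ker\pi^R_\N$. Combining the two inclusions with the surjectivity of $\pi^R_\N$ gives right exactness of \eqref{eq_comparison2}. There is no real obstacle here: the proposition is essentially a repackaging of \cref{thm_JS_additivity}, and the only care needed is the bookkeeping in the second paragraph and, for \eqref{eq_comparison1}, keeping track of the correct shift throughout the dual statements.
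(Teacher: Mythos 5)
Your proposal is correct and follows essentially the same route as the paper's proof: surjectivity of $\pi^R_\N$, the identification of $\Ker\pi^R_\N$ with the subgroup generated by the triangle relations so that \cref{thm_JS_additivity} gives $\Ker\pi^R_\N\subseteq\Im\theta^R_\N$, and the triangle $C[-1]\to 0\to C\xrightarrow{\id_C}C$ for the reverse containment. Your explicit justification of the kernel description via the two quotients of a common group is a detail the paper leaves implicit, but the argument is the same.
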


\begin{proof}
We prove \eqref{eq_comparison2} is right exact. 
Since $\pi^R_{\N}$ is surjective, it suffices to show that $\Ker\pi^R_{\N} = \Im\theta^R_\N$. 
\cref{thm_JS_additivity} tells us that 
\[
\Ker \pi^R_{\N} 
    = \Braket{[A]_\N^R-[B]_\N^R+[C]_\N^R | A\lra B\lra C\overset{h}{\lra} A[1] \text{ is a triangle in }\C}
    \sse \Im\theta^R_\N.
\]
For the other containment, take any element $[C]^R_\N+[C[-1]]^R_\N\in\Im\theta^R_\N$.
The existence of the triangle 
$C[-1]\lra 0\lra C\overset{\id_{C}}{\lra} C$ implies 
$
\pi^R_{\N}([C]^R_\N+[C[-1]]^R_\N) 
    = [C]+[C[-1]] 
    = [0]
$ 
in $K_0(\C)$. 
Hence, $\Ker\pi^R_{\N}= \Im\theta^R_\N$ and we are done.
\end{proof}

The following non-commutative diagram summarizes the main abelian group homomorphisms that have appeared so far (cf.\ \eqref{diag_one_sided_localization}). 
By \cref{thm_abel_loc1}\ref{extri-local}, the localization functor $(Q,\mu)\colon \C_\N\to (\C/\N,\widetilde{\BE_\N},\widetilde{\fs_\N})$ is exact. Hence, it induces a group homomorphism 
$p\colon K_0(\C_\N)\to K_0(\C/\N)$. 
Furthermore, it is surjective by Enomoto--Saito \cite[Cor.~4.32]{ES22}. 
\begin{equation}
\label{diag_bridges_of_K0}
\begin{tikzcd}[column sep = 1.5cm,row sep=0.2cm]
    &&\Ker \pi^R_{\N} = \Im\theta_{\N}^{R}\arrow[hook]{dl}&\\
    &K_{0}(\C^R_\N)\arrow[two heads]{dl}[swap]{\pi^R_{\N}}\commutes[\circlearrowleft]{dd}&&\\
    K_0(\C)&&K_{0}(\C_\N)\arrow[two heads]{ul}\arrow[two heads]{dl}\arrow[two heads]{r}{p}
        \commutes[\not\circlearrowleft]{uu}
        \commutes[\not\circlearrowleft]{dd}
    &K_0(\C/\N)\arrow[two heads, bend left]{ddl}{\theta_{\N}^{L}}\arrow[two heads, bend right]{uul}[swap]{\theta_{\N}^{R}}\\
    &K_{0}(\C_\N^L)\arrow[two heads]{ul}{\pi^L_{\N}}&&\\
    &&\Ker\pi^L_{\N} = \Im\theta_{\N}^{L}\arrow[hook]{ul}&
\end{tikzcd}
\end{equation}

\subsection{Connection to higher homological algebra}
\label{subsec_d_angulated_cat}

In \cite{Fed21}, Fedele exhibited the connection between the Grothendieck group of a triangulated category and that of an $n$-cluster tilting subcategory via J{\o}rgensen's triangulated index \cite[Def.~3.3]{Jor21}. In this subsection, using \cref{prop_comparison}, we generalize two key results from \cite{Fed21}, namely \cite[Prop.~3.5 and Thm.~C]{Fed21}.
Our setup here is as follows, which is a special case of \cref{setup:additivity-formula} due to  \cref{lem_comparison_relative_structures}.

\begin{setup}\label{setup:Fedele}
In this subsection, we suppose $(\C,\mathbb{E},\mathfrak{s})$  corresponds to a skeletally small, idempotent complete, triangulated category $\C$ with suspension $[1]$.
We also fix an $n$-cluster tilting subcategory $\X\subseteq \C$ and put $\N\deff\X^{\perp_0}$.
\end{setup}

Palu has previously shown that an exact sequence of the form \eqref{eq_comparison2} arises from a $2$-cluster tilting subcategory of an algebraic triangulated category (see \cite[Lem.~9]{Pal09}). 
We prove an exact sequence of the same form exists for $\X$ in \cref{cor:palu-lemma-9-nCT} below, but with fewer restrictions on $\C$. 
We also note that Palu used homotopy categories and t-structures to prove \cite[Lem.~9]{Pal09}, whereas we use different methods.
We denote the JS index isomorphism (see \cref{cor_JS_index}) by  
\begin{equation}\label{eqn:triangulated-index}
\index_\X 
    \colon K_0(\C_R^\X) = K_0(\C^R_\N) 
        \overset{\cong}{\lra} K_0^{\sp}(\X).
\end{equation}
Recall that $G_{*}$ first appeared in \cref{rem:connection-to-JS}.

\begin{corollary}
\label{cor:palu-lemma-9-nCT}
There exists an exact sequence of Grothendieck groups as follows. 
\begin{equation}\label{eqn:nCT-right-exact-seq}
\begin{tikzcd}[column sep=2.5cm]
    K_0(\mod\X) 
        \arrow{r}{\index_\X \theta^R_\N G_*^{-1}}
    & K_0^{\mathsf{sp}}(\X)
        \arrow{r}{\pi^R_{\N} \index_\X^{-1}}
    & K_0(\C)
        \arrow{r}{}
    & 0
\end{tikzcd}
\end{equation}
\end{corollary}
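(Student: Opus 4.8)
The plan is to deduce \eqref{eqn:nCT-right-exact-seq} by transporting the right exact sequence \eqref{eq_comparison2} of \cref{prop_comparison} along two isomorphisms. First I would confirm that \cref{setup:Fedele} is a special case of \cref{setup:additivity-formula}: an $n$-cluster tilting subcategory is functorially finite (hence contravariantly finite) and rigid, so \cref{lem_comparison_relative_structures} gives $\C^R_\N = \C^\X_R$ and $\cone(\N,\N) = \C$ for $\N = \X^{\perp_0}$. Hence every result of \S\ref{subsec_additive_formula} applies, and in particular \cref{prop_comparison} provides the right exact sequence
\[
\begin{tikzcd}
K_0(\C/\N) \arrow{r}{\theta^R_\N} & K_{0}(\C^R_\N) \arrow{r}{\pi^R_{\N}} & K_0(\C) \arrow{r}{} & 0.
\end{tikzcd}
\]
Then I would collect the two isomorphisms needed to rewrite its first two terms: by \cref{example:n-CT-resolution} every object of $\C^R_\N$ admits a finite $\X$-resolution, so \cref{cor_Quillen_resolution_to_index} supplies the JS index isomorphism $\index_\X\colon K_{0}(\C^R_\N)\overset{\cong}{\lra}K_0^{\mathsf{sp}}(\X)$ of \eqref{eqn:triangulated-index}; and the exact equivalence $G\colon \C/\N\overset{\simeq}{\lra}\mod\X$ of \cref{ex_BM_BBD}\ref{example:rigid} induces the isomorphism $G_*\colon K_0(\C/\N)\overset{\cong}{\lra}K_0(\mod\X)$ of \eqref{eqn:G-star}.

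The second step is a transport-of-structure argument. I would observe that the diagram
\[
\begin{tikzcd}
K_0(\C/\N) \arrow{r}{\theta^R_\N} \arrow{d}{G_*} & K_{0}(\C^R_\N) \arrow{r}{\pi^R_{\N}} \arrow{d}{\index_\X} & K_0(\C) \arrow{r}{} \arrow[equals]{d}{} & 0 \\
K_0(\mod\X) \arrow{r}{\index_\X\theta^R_\N G_*^{-1}} & K_0^{\mathsf{sp}}(\X) \arrow{r}{\pi^R_{\N}\index_\X^{-1}} & K_0(\C) \arrow{r}{} & 0
\end{tikzcd}
\]
commutes, since $\bigl(\index_\X\theta^R_\N G_*^{-1}\bigr)\circ G_* = \index_\X\theta^R_\N$ and $\bigl(\pi^R_{\N}\index_\X^{-1}\bigr)\circ\index_\X = \pi^R_{\N}$ by definition of the two bottom horizontal maps. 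Because $G_*$ and $\index_\X$ are isomorphisms, exactness of the bottom row at each position is equivalent to exactness of the top row at the corresponding position, and surjectivity of $\pi^R_{\N}$ likewise transfers. Since the top row is right exact by \cref{prop_comparison}, the bottom row \eqref{eqn:nCT-right-exact-seq} is right exact, which is the assertion.

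I do not expect a real obstacle: the argument is purely formal once the pieces are in place, and the only care required is bookkeeping. Namely, I would double-check that $n$-cluster tilting subcategories genuinely satisfy all invoked hypotheses (functorial finiteness, rigidity, $\cone(\N,\N) = \C$, and the existence of finite $\X$-resolutions in $\C^R_\N$ from \cref{example:n-CT-resolution}), and that the identification $K_0(\C^\X_R) = K_0(\C^R_\N)$ of \cref{lem_comparison_relative_structures} matches the maps $\theta^R_\N$, $\pi^R_{\N}$ and $\index_\X$ in the statement with the ones already constructed. Optionally, using $\theta_\X G_* = \theta^R_\N$ from \cref{rem:connection-to-JS}, the left-hand map can be rewritten as $\index_\X\theta_\X$, making the comparison with Fedele's \cite[Prop.~3.5, Thm.~C]{Fed21} transparent; but this reformulation is not needed for the exactness claim.
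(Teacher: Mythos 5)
Your proposal is correct and follows the same route as the paper: the paper's proof is precisely to take the right exact sequence \eqref{eq_comparison2} from \cref{prop_comparison} and twist it by the isomorphisms \eqref{eqn:G-star} and \eqref{eqn:triangulated-index}. Your extra bookkeeping (verifying that \cref{setup:Fedele} falls under \cref{setup:additivity-formula} via \cref{lem_comparison_relative_structures}, and that \cref{example:n-CT-resolution} supplies the finite $\X$-resolutions needed for the JS index isomorphism) is exactly the implicit content of the paper's shorter argument.
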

\begin{proof}
Applying \cref{prop_comparison}, we obtain the exact sequence \eqref{eq_comparison2}. 
It follows that \eqref{eqn:nCT-right-exact-seq} is right exact by twisting \eqref{eq_comparison2} with the isomorphisms \eqref{eqn:G-star} and 
\eqref{eqn:triangulated-index}.
\end{proof}

\begin{remark}\label{rem:improve-Fedele}
It is established in \cite[Lem.~3.4]{Fed21} that the assignment 
$
[C] 
    \mapsto \index_\X([C]_R^\X) 
    = \sum_{i=0}^{n-1}(-1)^i[X_i]^{\mathsf{sp}}
    \in K_0^{\sp}(\X)
$
for $[C]\in K_0(\C)$ 
induces a group homomorphism 
\[
f_\X\colon K_0(\C) \to K_0^{\sp}(\X)/\Im(\index_\X \theta_\N^R G_*^{-1}).
\]
And, in \cite[Prop.~3.5]{Fed21}, it is shown that $f_\X$ is an isomorphism if there exists a certain homomorphism in the other direction. 
By \cref{cor:palu-lemma-9-nCT}, we see that 
$f_\X$ is always an isomorphism with inverse induced by $\pi^R_{\N} \index_\X^{-1}$, and hence 
\emph{always} induces an isomorphism 
\[
K_0^{\sp}(\X)/\Im(\index_\X \theta_\N^R G_*^{-1}) \cong K_0(\C).
\]
\end{remark}

Our final goal of this article is to generalize \cite[Thm.~C]{Fed21}, which shows that $K_0(\C)$ is isomorphic to the Grothendieck group arising from $\X$ when it admits an $(n+2)$-angulated structure. However, we need an extra assumption to guarantee the existence of this structure.

\begin{setup}\label{setup:X-closed-under-n-shift}
In addition to \cref{setup:Fedele}, 
we assume $\X[n]=\X$ in the rest of \S\ref{subsec_d_angulated_cat}.
\end{setup}

It follows from \cite[Thm.~1]{GKO13} that $\X$ forms part of an $(n+2)$-angulated category $(\X,[n],\indpent)$.

\begin{construction}\label{const_d_angle}
We recall how to construct $(n+2)$-angles in $\indpent$ from a given morphism $X_1\overset{g_1}{\lra} X_0$ in $\X$.
First we complete $g_1$ to a triangle $C\overset{f_1}{\lra} X_1\overset{g_1}{\lra} X_0\overset{h_1}{\lra} C[1]$ in $\C$.
Since $\X$ is an $n$-cluster tilting subcategory, the object $C$ admits a length $n-1$ $\X$-resolution
\begin{equation}
\label{eq_d_angle}
\begin{tikzcd}[column sep=1.3cm]
    X_{n+1}
        \arrow{r}{f_{n}} 
    & X_n 
        \arrow{r}{f_{n-1}g_n}
    &\cdots 
        \arrow{r}{f_2g_3} 
    & X_2
        \arrow{r}{g_2} 
    &C,
\end{tikzcd}
\end{equation}
with triangles $C_{i+1}\overset{f_i}{\lra} X_i\overset{g_i}{\lra} C_i\overset{h_i}{\lra} C_{i+1}[1]$ for $2\leq i\leq n$ and $(C_2, C_{n+1}) \deff (C, X_{n+1})$.
Here, for each $2\leq i \leq n$,  we have taken right $\X$-approximations 
$X_i\overset{g_i}{\lra}C_i$ 
of $C_i$ with $C_{i+1}\in \X*\cdots *\X[n-i]$.
Since $\X[n]=\X$, we have the $(n+2)$-angle
\begin{equation}\label{eqn:tower-triangles-n-angle}
\begin{tikzcd}[column sep=1.3cm]
    X_{n+1}
        \arrow{r}{f_{n}}
    & X_n 
        \arrow{r}{f_{n-1}g_n}
    & \cdots 
        \arrow{r}{f_2g_3} 
    & X_2
        \arrow{r}{f_1g_2}
    & X_1
        \arrow{r}{g_1}
    & X_0
        \arrow{r}{h}
    & X_{n+1}[n]
\end{tikzcd}
\end{equation}
in $\X$, 
where $h$ is the composition 
$\begin{tikzcd}[column sep=1.5cm, cramped]
    X_0\arrow{r}{h_1} &C_2[1]\arrow{r}{h_2[1]}&C_3[2]\arrow{r}{h_3[2]}&\cdots\arrow{r}{h_n[n-1]} &X_{n+1}[n].
\end{tikzcd}$
\end{construction}

\begin{remark}\label{rem_d_angle}
As the $\X$-resolution \eqref{eq_d_angle} 
is built from 
right $\X$-approximations $g_i$ for $2\leq i \leq n$, 
we have 
$h_i\in[\N]$ for $2\leq i \leq n$. 
In particular, this implies 
$[C]^R_\N = \sum_{i=2}^{n+1}(-1)^i[X_i]^R_\N$
in $K_0(\C^R_\N)$. 
\end{remark}

We denote by $K_0^{\indpent}(\X)$ the Grothendieck group of $(\X,[n], \indpent)$ in the sense of \cite[Def.~2.1]{BT14}.
The following result is a generalization of \cite[Thm.~C]{Fed21}. Indeed, we do not assume that $\C$ is 
$k$-linear ($k$ a field), 
$\Hom$-finite, 
Krull-Schmidt, 
with a Serre functor; nor do we impose any locally bounded assumption on $\X$.

\begin{theorem}\label{thm-general_Fedele-thm}
There is an isomorphism $K_0(\C)\cong K_0^{\indpent}(\X)$ 
induced by \eqref{eqn:triangulated-index}. 
\end{theorem}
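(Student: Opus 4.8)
The plan is to identify both $K_0(\C)$ and $K_0^{\indpent}(\X)$ with the same quotient of $K_0^{\sp}(\X)$. By \cite{BT14}, the group $K_0^{\indpent}(\X)$ equals $K_0^{\sp}(\X)/R$, where $R$ is the subgroup generated by the elements $\sum_{i=0}^{n+1}(-1)^i[X_i]^{\sp}$ ranging over all $(n+2)$-angles $X_{n+1}\to\cdots\to X_1\to X_0\to X_{n+1}[n]$ in $\indpent$. On the other hand, \cref{cor:palu-lemma-9-nCT} provides the right exact sequence $K_0(\mod\X)\xrightarrow{\index_\X\theta^R_\N G_*^{-1}}K_0^{\sp}(\X)\xrightarrow{\pi^R_\N\index_\X^{-1}}K_0(\C)\longrightarrow 0$, so that $\pi^R_\N\index_\X^{-1}$ induces an isomorphism $K_0^{\sp}(\X)/\Im(\index_\X\theta^R_\N G_*^{-1})\xrightarrow{\cong}K_0(\C)$. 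Since $G_*$ is an isomorphism we have $\Im(\index_\X\theta^R_\N G_*^{-1})=\index_\X(\Im\theta^R_\N)$, and since $\ker\pi^R_\N=\Im\theta^R_\N$ by \cref{prop_comparison} and $\index_\X$ of \eqref{eqn:triangulated-index} is itself an isomorphism, the induced map $K_0(\C)\cong K_0^{\sp}(\X)/\Im(\index_\X\theta^R_\N G_*^{-1})$ is exactly the one induced by $\index_\X$. It therefore remains to prove the equality $\Im(\index_\X\theta^R_\N G_*^{-1})=R$ of subgroups of $K_0^{\sp}(\X)$.

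The main computation is the following. Fix a morphism $g_1\colon X_1\to X_0$ in $\X$ and carry out \cref{const_d_angle}: complete $g_1$ to a triangle $C\overset{f_1}{\to}X_1\overset{g_1}{\to}X_0\overset{h_1}{\to}C[1]$ (so $C[1]=\cone(g_1)$), choose the $\X$-resolution \eqref{eq_d_angle} of $C$, and form the $(n+2)$-angle \eqref{eqn:tower-triangles-n-angle}. Rotating shows $X_1\overset{g_1}{\to}X_0\overset{h_1}{\to}C[1]$ has connecting $\BE$-extension $C[1]\to X_1[1]$, which factors through $X_1[1]$; and $X_1[1]\in\N=\X^{\perp_0}$ by rigidity of $\X$, so by \cref{lem_specific_relative_structures} this is an $\fs^R_\N$-conflation and $[C[1]]^R_\N=[X_0]^R_\N-[X_1]^R_\N$ in $K_0(\C^R_\N)$. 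Combining this with \cref{rem_d_angle}, which gives $[C]^R_\N=\sum_{i=2}^{n+1}(-1)^i[X_i]^R_\N$, and with the definition of $\theta^R_\N$ in \cref{prop_add_formula3}, I obtain
\[
\theta^R_\N\bigl([Q(\cone(g_1))]\bigr)=[C[1]]^R_\N+[C]^R_\N=\sum_{i=0}^{n+1}(-1)^i[X_i]^R_\N .
\]
Applying the JS index isomorphism $\index_\X$, which sends $[X]^R_\N$ to $[X]^{\sp}$ for $X\in\X$, gives $\index_\X\theta^R_\N\bigl([Q(\cone(g_1))]\bigr)=\sum_{i=0}^{n+1}(-1)^i[X_i]^{\sp}$, which is precisely the defining relation of $R$ attached to the $(n+2)$-angle \eqref{eqn:tower-triangles-n-angle} (using $\X[n]=\X$).

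To finish, I would show that $K_0(\C/\N)$ is generated by the classes $[Q(\cone(g_1))]$ as $g_1$ runs over morphisms of $\X$. Using the exact equivalence $G\colon\C/\N\xrightarrow{\simeq}\mod\X$ from \cref{ex_BM_BBD}\ref{example:rigid} and the fact that $GQ$ agrees with the cohomological functor $Y\mapsto\C(-,Y)|_\X$ (here $Q$ is cohomological by \cref{N-serre-local-is-abelian} and $G$ is exact), applying $GQ$ to the triangle on $g_1$ and using $\C(-,X_1[1])|_\X=0$ (because $X_1[1]\in\N$) yields $GQ(\cone(g_1))\cong\cok\bigl(\C(-,g_1)|_\X\bigr)$ in $\mod\X$. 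Every object of $\mod\X$ is such a cokernel of a map between representables, which by the Yoneda lemma on $\X$ has the form $\C(-,g_1)|_\X$; hence the classes $G_*^{-1}[M]=[Q(\cone(g_1))]$ generate $K_0(\C/\N)=K_0(\mod\X)$. Consequently $\Im(\index_\X\theta^R_\N G_*^{-1})$ is generated exactly by the relations $\sum_{i=0}^{n+1}(-1)^i[X_i]^{\sp}$ arising from $(n+2)$-angles built as in \cref{const_d_angle}; and since, by the construction of the $(n+2)$-angulated structure in \cite{GKO13}, every $(n+2)$-angle in $\indpent$ is isomorphic to one of this form, these relations generate all of $R$. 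This gives $\Im(\index_\X\theta^R_\N G_*^{-1})=R$, and hence the isomorphism $K_0(\C)\cong K_0^{\indpent}(\X)$ induced by \eqref{eqn:triangulated-index}.

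I expect the main obstacle to be the bookkeeping in the central computation — verifying that $\theta^R_\N$ evaluated at $[Q(\cone(g_1))]$ reproduces exactly the alternating sum over \eqref{eqn:tower-triangles-n-angle}, with the correct signs and shift $\X[n]=\X$ — together with the generation statement for $K_0(\C/\N)$; the remaining point, that every $(n+2)$-angle is up to isomorphism of the form in \cref{const_d_angle}, is where I would appeal directly to \cite{GKO13}.
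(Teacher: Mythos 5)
Your reduction to the equality $\Im(\index_\X\theta^R_\N G_*^{-1})=R$, and your central computation $\index_\X\theta^R_\N\bigl([Q(\cone(g_1))]\bigr)=\sum_{i=0}^{n+1}(-1)^i[X_i]^{\sp}$ via \cref{const_d_angle} and \cref{rem_d_angle}, are correct and match the core of the paper's argument. Your way of getting the containment $\Im(\index_\X\theta^R_\N G_*^{-1})\subseteq R$ --- generating $K_0(\C/\N)$ by the classes $[Q(\cone(g_1))]$ using $GQ\cong\C(-,-)|_\X$ and the fact that every object of $\mod\X$ is a cokernel of a map between representables --- is a clean variant of the paper's step, which instead starts from an arbitrary $C\in\C$, builds a comparison object from two right $\X$-approximations, and uses a morphism in $\Sn$ together with \cref{lem_add_formula1}; both routes are fine.

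The gap is in the reverse containment $R\subseteq\Im(\index_\X\theta^R_\N G_*^{-1})$. You assert that ``by the construction of the $(n+2)$-angulated structure in \cite{GKO13}, every $(n+2)$-angle in $\indpent$ is isomorphic to one of this form'', i.e.\ to one produced by \cref{const_d_angle}. That is not what \cite{GKO13} provides: there the $(n+2)$-angles are defined to be the sequences admitting \emph{some} tower of triangles in $\C$, with no requirement that the maps $X_i\to C_i$ be right $\X$-approximations or that the cocones lie in $\X*\cdots*\X[n-i]$, so the identification you need is a statement requiring proof rather than a citation. (It can in fact be proved: in any tower one has $C_{i+1}[1]\in\X[1]*\cdots*\X[n-i]\subseteq\X^{\perp_0}=\N$, so the connecting morphisms factor through $\N$ and the computation of \cref{rem_d_angle} applies verbatim --- but you give no such argument.) The simpler fix, and what the paper does, avoids the claim entirely: any $(n+2)$-angle is spliced from triangles of $\C$, so in $K_0(\C)$ one gets $[C]=[X_1]-[X_0]$ and $[C]=\sum_{i=2}^{n+1}(-1)^i[X_i]$, whence $\sum_{i=0}^{n+1}(-1)^i[X_i]=0$; thus the relation lies in $\Ker(\pi^R_\N\index_\X^{-1})$, which equals $\Im(\index_\X\theta^R_\N G_*^{-1})$ by \cref{cor:palu-lemma-9-nCT}, giving $R\subseteq\Im(\index_\X\theta^R_\N G_*^{-1})$ in one line.
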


\begin{proof} 
Set $\rho \deff \pi^R_{\N} \index_\X^{-1}$. 
It follows from \cref{cor:palu-lemma-9-nCT}
that there is a short exact sequence
\begin{equation}
\begin{tikzcd}
    0
        \arrow{r}
    & \Ker \rho
        \arrow[hookrightarrow]{r}
    & K_0^{\mathsf{sp}}(\X)
        \arrow{r}{\rho}
    & K_0(\C)
        \arrow{r}{}
    & 0.
\end{tikzcd}
\end{equation}
Hence, 
it suffices to show
\[
\Ker\rho=\Braket{ \sum_{i=0}^{n+1}(-1)^i[X_i]^{\sp} | 
X_{n+1}\to\cdots\to X_0\to X_{n+1}[n]\text{\ is an $(n+2)$-angle in\ }\X}.
\]
As each $(n+2)$-angle 
in $\X$ is of the form \eqref{eqn:tower-triangles-n-angle} and, in particular, built from triangles in $\C$, 
we immediately see that 
$\rho\left(\sum_{i=0}^{n+1}(-1)^i[X_i]^{\sp}\right) = 0$. 

For the other containment, we first note that we have
\[
\Ker\rho 
    = \index_\X ( \Ker\pi^R_{\N} )
    = \index_\X ( \Im\theta^R_\N) 
    = \Braket{ \index_\X( [C]^R_\N+[C[-1]]^R_\N ) | C\in\C},
\]
using that $\index_\X$ is an isomorphism, and \cref{prop_comparison} for the second equality. 
Thus, given an object $C_0 \deff C \in\C$, we will construct an $(n+2)$-angle in $\indpent$. 
By taking a right $\X$-approximation 
$g_0\colon X_0 \to C_0$ and a right $\X$-approximation 
$g'_1\colon X_1 \to C_1$, where 
$C_1 \deff \cocone(g_0)$, 
we obtain the following commutative diagram in $\C$ made of triangles.
\[
\begin{tikzcd}
    X_1\arrow[equals]{r}{}\arrow{d}[swap]{g'_1}&X_1\arrow{d}{g_1 \deff f_0 g'_1}&&\\
    C_1\arrow{r}{f_0}\arrow{d}{}&X_0\arrow{r}{g_0}\arrow{d}{}&C_0\arrow{r}{}\arrow[equals]{d}{}&C_1[1]\arrow{d}{}\\
    C_2[1]\arrow{r}{}\arrow{d}[swap]{-f'_1[1]}&C'_0\arrow{r}{s}\arrow{d}{}&C_0\arrow{r}{}&C_2[2]\\
    X_1[1]\arrow[equals]{r}{}&X_1[1]&&
\end{tikzcd}
\]
The second column is an $\mathfrak{s}^R_\N$-triangle  
as $X_1[1]\in\X[1]\sse \X^{\perp_{0}} = \N$ and $\X$ is $n$-cluster tilting. So we have $[C'_0]^R_\N=[X_0]^R_\N - [X_1]^R_\N$.
Following \cref{const_d_angle}, we can complete the morphism 
$\begin{tikzcd}
    X_1\arrow{r}{g_1}& X_0
\end{tikzcd}$
to an $(n+2)$-angle \eqref{eqn:tower-triangles-n-angle}.
Thus, due to \cref{rem_d_angle}, we have
\[
[C'_0]^R_\N+[C'_0[-1]]^R_\N
    = \sum_{i=0}^{n+1}(-1)^i[X_i]^R_\N.
\]
Note that $C_2[1]$ sits in $\X[1]*\cdots *\X[n-2]$ 
and thus $C_2[1], C_2[2]\in \X[1] * \cdots * \X[n-1] = \X^{\perp_{0}} = \N$. 
In particular, the morphism $s\colon C'_0\to C_0$ lies in $\Sn$, and hence $[C'_0]^R_\N+[C'_0[-1]]^R_\N=[C_0]^R_\N+[C_0[-1]]^R_\N$ by \cref{lem_add_formula1}. 
Finally, we see that 
\begin{align*}
\index_\X( [C]^R_\N+[C[-1]]^R_\N )
     &= \index_\X( [C'_0]^R_\N+[C'_0[-1]]^R_\N )\\
     &= \index_\X( \sum_{i=0}^{n+1}(-1)^i[X_i]^R_\N)\\
     &= \sum_{i=0}^{n+1}(-1)^i[X_i]^{\sp}. \qedhere
\end{align*}
\end{proof}

We end this section by making a remark on a class of examples where \cref{thm-general_Fedele-thm} applies but \cite[Thm.~C]{Fed21} may not.

\begin{remark}
As a benefit of our generalization, $n$-cluster tilting subcategories closed under $n$-shifts in singularity categories fall within the scope of \cref{thm-general_Fedele-thm}.
We recall that, for a Noetherian $k$-algebra $\Lambda$ over a field $k$, the singularity category $\mathsf{D}_{sg}(\Lambda)$ is defined to be the Verdier quotient of the bounded derived category $\mathsf{D}^b(\Lambda)$ by the subcategory $\mathsf{perf}(\Lambda)$ of perfect complexes.
Some examples and constructions of $n$-cluster tilting subcategories in $\mathsf{D}_{sg}(\Lambda)$ are given in e.g.\ \cite{Iya18,Kva21}.
It is known that $\mathsf{D}_{sg}(\Lambda)$ is not $\Hom$-finite in general; indeed, the $\Hom$-finiteness of $\mathsf{D}_{sg}(\Lambda)$ implies that $\Lambda$ is Gorenstein in some cases (see \cite[Thm.~2.1]{Kal21}). 
Thus, \cref{thm-general_Fedele-thm} is effective even in the case of singularity categories by passing to the idempotent completion.
\end{remark}

\section{Application to the index in abelian categories}

In this section we will use \cref{thm_Quillen_resolution} to prove an analogue of \cref{cor_JS_index} for abelian categories. 
Since we will not need the details of the definition of an $n$-cluster tilting subcategory of an abelian category, we refer the reader to \cite[Def.~3.14]{Jasso} for the precise formulation. Instead we just recall the needed consequences of the definition below. 
We fix the following setup for the remainder of the article.

\begin{setup}
Suppose $(\C,\BE,\fs)$ is a skeletally small abelian category and that $\X\sse\C$ is an $n$-cluster tilting subcategory ($n \geq 1$). 
\end{setup}

By the dual of \cite[Prop.~3.17]{Jasso}, for each $C\in\C$, 
there is a diagram 
\begin{equation}
\label{eqn:X-res-for-C}
\begin{tikzcd}[column sep=0.5cm]
0
    \arrow{r}
& X_{n-1}
    \arrow{rr}{f_{n-2}}
    \arrow[equals]{dr}
& {}
& X_{n-2}
    \arrow{rr}{f_{n-3}g_{n-2}}
    \arrow{dr}{g_{n-2}}
& {}
& \cdots
    \arrow{rr}{f_{0}g_{1}}
& {}
& X_{0}
    \arrow{rr}
    \arrow{dr}{g_{0}}
& {}
& C 
    \arrow{r}
& 0 
\\
{}
& {}
& C_{n-1}
    \arrow{ur}{f_{n-2}}
& {}
& C_{n-2}
& {}
& C_{1}
    \arrow{ur}{f_{0}}
& {}
& C_{0}
    \arrow[equals]{ur}
& {}
& {}
\end{tikzcd}
\end{equation}
in $\C$, 
where:
\begin{enumerate}
    \item $X_{i}\in\X$ for each $0 \leq i \leq n-1$;

    \item 
    $
    \begin{tikzcd}[column sep=0.5cm]
        0
            \arrow{r}
        & C_{i+1} 
            \arrow{r}{f_{i}}
        & X_{i}
            \arrow{r}{g_{i}}
        & C_{i}
            \arrow{r}
        & 0
    \end{tikzcd}
    $
    is a short exact sequence 
    for each $0 \leq i \leq n-2$; and

    \item the morphism $g_{i}\colon X_{i} \to C_{i}$ is a right $\X$-approximation for each $0 \leq i \leq n-2$.
\end{enumerate}
In particular, we see that each object in $\C$ has a finite $\X$-resolution (of length at most $n-1$) in $(\C,\BE,\fs)$ in the sense of \cref{def_X_resolution}. 
Although $\X$ is also extension-closed and closed under direct summands in $\C$, we cannot apply \cref{thm_Quillen_resolution} yet. Indeed, it easy to find examples where $\X$ is not closed under cocones of $\fs
$-deflations. We will need to pass to a relative structure on $\C$ first.

\begin{definition}(cf.\ \cref{def_relative1})
For objects $A,C\in\C$, we define:
\[
\mathbb{E}_R^\X(C,A) 
    \deff
\Set{\delta = [A\overset{f}{\lra} B\overset{g}{\lra} C]\in\mathbb{E}(C,A) | 
    x^{*}\delta = 0\textnormal{\ for all\ }x\colon X\to C\textnormal{\ with\ }X\in\X}.
\]
\end{definition}

Note that 
$x^{*}\delta = \BE(x,A)(\delta)$ is the pullback of the short exact sequence 
$A\overset{f}{\lra} B\overset{g}{\lra} C$
along $x\colon X\to C$. 
Just like in the triangulated case, $\mathbb{E}_R^\X$ (which is denoted $\BE_{\X}$ in \cite[Def.~3.18]{HLN21}) 
is a closed subfunctor of $\BE$ by \cite[Prop.~3.19]{HLN21}. 
This yields 
the extriangulated category
$\C_R^\X\deff(\C,\mathbb{E}_R^\X,\mathfrak{s}_R^\X)$ which is relative to 
$(\C,\mathbb{E},\mathfrak{s})$. In fact, $\C_R^\X$ is an exact category by \cite[Cor.~3.18]{NP19}, because each $\fs_R^\X$-inflation (resp.\ $\fs_R^\X$-deflation) is a monomorphism (resp.\ an epimorphism).

\begin{lemma}
$\X$ is closed under cocones of $\fs_R^\X$-deflations.
\end{lemma}
\begin{proof}
Suppose 
$\begin{tikzcd}[column sep=0.5cm]
A
    \arrow{r}
& X_{1}
    \arrow{r}
& X_{0}
    \arrow[dashed]{r}{\delta}
& {}
\end{tikzcd}$
is an $\fs_R^\X$-triangle with 
$X_{i}\in\X$. 
Consider the identity morphism $\id_{X_{0}}\colon X_{0} \to X_{0}$ and notice that 
$\delta = \id_{X_{0}}^{*}\delta = 0$ as $X_{0}\in\X$.
In particular, $A$ is a direct summand of $X_{1}$ and hence $A\in\X$.
\end{proof}

Now that we have passed to a relative structure, we must show the $\X$-resolution of $C$ in $(\C,\BE,\fs)$ arising from 
\eqref{eqn:X-res-for-C} is still an $\X$-resolution in $\C_{R}^{\X}$.

\begin{lemma}
Each $\fs$-triangle 
$
\begin{tikzcd}[column sep=0.5cm]
    C_{i+1} 
        \arrow{r}{f_{i}}
    & X_{i}
        \arrow{r}{g_{i}}
    & C_{i}
        \arrow[dashed]{r}{\delta_{i}}
    & {}
\end{tikzcd}
$
arising in \eqref{eqn:X-res-for-C} is in fact an
$\fs_R^\X$-triangle.
\end{lemma}
\begin{proof}
Let $x\colon X \to C_{i}$ be a morphism in $\C$ with $X\in\X$. 
Since $g_{i}$ is a right $\X$-approximation of $C_{i}$, we have that $x$ factors through $g_{i}$. This implies 
$x^{*}\delta_{i} = 0$ by 
\cite[Cor.~3.5]{NP19}, so $\delta_{i}\in\BE_R^\X$. 
\end{proof}

Putting all this together, we immediately have the following by  \cref{thm_Quillen_resolution}.

\begin{theorem}
\label{thm:abelian-n-CT-isomorphism}
There is an isomorphism of abelian groups
\begin{align*}
K_{0}(\C_R^\X) & \overset{\cong}{\longleftrightarrow}K_0^{\mathsf{sp}}(\X)\\
[C]^\X_R & \longmapsto \sum_{i=0}^{n-1}(-1)^i[X_i]^{\mathsf{sp}}  \nonumber\\
[X]^\X_R & \longmapsfrom [X]^{\sp}, \nonumber
\end{align*}
where 
$C\in\C$ admits an $\X$-resolution 
$
X_{n-1}\to \cdots\to X_1\to X_0\to C
$
in $\C^\X_R$ of length $n-1$.
\end{theorem}

We now explain how the above relates to the index defined by Reid \cite{Rei20b}.

\begin{remark}\label{rem:relation-to-Reid-index}
The sum 
$
\sum_{i=0}^{n-1} (-1)^{i} [X_{i}]^{\sp}
$ 
was defined to be the \emph{index $\index_{\X}(C)$ of $C$ with respect to $\X$}; see \cite[Sec.~1]{Rei20b}. 
Since $\C$ is idempotent complete, we can always find a \emph{minimal} $\X$-resolution of $C$ by removing 
trivial summands (i.e.\ 
$\cdots \to 0 \to X \overset{\id_{X}}{\lra} X \to 0 \to \cdots$). 
In particular, the value $\index_{\X}(C)$ in $K_{0}^{\sp}(\X)$ 
does not change when we remove such summands and, since a minimal $\X$-resolution is unique up to isomorphism, 
the index is well-defined; see \cite[Rem.~2.1]{Rei20b}.
\end{remark}


\medskip
\noindent
{\bf Acknowledgements.}
The authors would like to thank Marcel B\"{o}kstedt, Hiroyuki Nakaoka and Katsunori Takahashi for their time, valuable discussions and comments. 
We are also very grateful to the referee for their careful reading of earlier versions of this article, which led to several improvements to the exposition.

Y.\ O.\ is supported by JSPS KAKENHI (grant JP22K13893).
A.\ S.\ is supported by: the Danish National Research Foundation (grant DNRF156); the Independent Research Fund Denmark (grant 1026-00050B); and the Aarhus University Research Foundation (grant AUFF-F-2020-7-16).


\end{document}